\newcommand{\Z}{\mathbb{Z}}
\DeclareMathOperator{\End}{End}
\newtheorem{theorem}{Theorem}
\newtheorem{Theorem}[theorem]{Theorem}
\newtheorem{definition}[theorem]{Definition}
\newtheorem{corollary}[theorem]{Corollary}
\newtheorem{lemma}[theorem]{Lemma} \newtheorem{prop}[theorem]{Proposition}
\newtheorem{example}[theorem]{Example}
  \numberwithin{theorem}{section}
\begin{document}


\author{Jon Cohen}
\title{On rings as unions of four subrings}
\maketitle

\thispagestyle{empty}
\pagestyle{headings}

\begin{abstract}
The covering number of an associative ring $R$ is the minimal number of proper subrings whose union is $R$. We establish a strategy to classify unital rings of a given finite covering number, and obtain a classification of unital rings whose covering number is four. Along the way we compute the covering number of every finite local ring whose residue field has prime order. 
\end{abstract}


\section{Introduction}

A standard problem in introductory abstract algebra courses is to show that a group is never the union of two proper subgroups. However, there exist groups which are the union of three proper subgroups. Indeed, by a theorem of Scorza \cite{Sco26}, these are precisely the groups that surject onto the Klein four group.

Similarly, no ring is the union of two subrings, but can be the union of three or more subrings. If $R$ is an associative ring, we say that $R$ is  {\it coverable} if it is the union of proper subrings, 
which need not have any multiplicative unit even if $R$ does. We call the collection of these subrings a {\it cover} of $R$. For such an $R$, let $\sigma(R)$ be the minimal possible size of a cover of $R$. We write $\sigma(R) = \infty$ if $R$ is not coverable or has no finite cover. 

In \cite{Maro12} it was shown precisely which associative rings are the union of three subrings, i.e., have $\sigma(R)=3$. The main goal of this paper is to extend some of their results to study the case $\sigma(R)=4$; we obtain a complete classification for rings $R$ with unit. 
One attractice aspect of our approach is that it reduces to a finite computation the problem of classifying rings that are a union of a given number of subrings. We now outline our results.

If $R$ is a ring, then we always have $\sigma(R)\leq \sigma(R/I)$ for an ideal $I$, since a cover of $R/I$ pulls back to a cover of $R$. For $n\geq 3$, define the set $S(n)$ to be those unital rings $R$ with $\sigma(R)=n$ and $\sigma(R/I)>n$ for every proper quotient $R/I$. Our first result is the following theorem. 

\begin{Theorem}
The set $S(n)$ is finite. If $R\in S(n)$ then $R$ is finite and has characteristic $p$ for some prime $p<n$. A ring has covering number $n$ if and only if it has a quotient in $S(n)$ and has no quotient in $S(m)$ for any $m<n$. 
\end{Theorem}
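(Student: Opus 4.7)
The main step is a finite-reduction claim: any $R$ with $\sigma(R) = n < \infty$ has a finite quotient $\bar R$ with $\sigma(\bar R) \le n$. Granting this, the iff characterization follows formally. Given $\sigma(R) = n$, the family of ideals $I \supseteq \ker(R \to \bar R)$ with $\sigma(R/I) \le n$ is finite and nonempty (since $\bar R$ is finite), so pick a maximal element; then $\sigma(R/I) = n$ and every $K \supsetneq I$ has $\sigma(R/K) > n$, giving $R/I \in S(n)$. Conversely, if $R$ has a quotient in $S(n)$ but none in $S(m)$ for $m<n$, monotonicity and the forward direction applied at level $\sigma(R)$ sandwich $\sigma(R) = n$. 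Applying finite reduction to $R \in S(n)$ itself forces $R = \bar R$ finite, since any proper quotient of $R$ would have $\sigma > n$.

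To prove finite reduction, fix an irredundant cover $R = R_1 \cup \cdots \cup R_n$. B.~H.~Neumann's lemma on irredundant group covers, applied to $(R,+)$, gives each $R_i$ finite additive index $m_i$. Setting $M = \operatorname{lcm}(m_i)$, the ideal $MR \subseteq \bigcap R_i$ yields an induced cover of $R/MR$ by the proper subrings $R_i/MR$, so $\sigma(R/MR) \le n$; replacing $R$ by $R/MR$ reduces to the case of finite additive exponent $M$. The unital hypothesis is then used to descend to a finite quotient: let $I_i$ denote the largest two-sided ideal of $R$ contained in $R_i$. One verifies that each $R/I_i$ is finite (this step relies essentially on $1 \in R$ controlling the two-sided action and on finite additive exponent), and then the diagonal $R \to \prod_i R/I_i$ factors through a finite quotient $\bar R$ carrying the cover, with properness preserved because $\bigcap I_i \subseteq R_i$ for each $i$.

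For the structural claims on $R \in S(n)$: if $\operatorname{char}(R) = p^k$ with $k \ge 2$, then $pR$ is a nonzero proper ideal, and the cover descends to $R/pR$ with properness preserved—because $R_i + pR = R$ would force the finite $p$-group $R/R_i$ to be $p$-divisible and hence trivial, contradicting properness of $R_i$. Thus $\sigma(R/pR) \le n$, contradicting $R \in S(n)$. A similar argument using coprime splittings of composite characteristic handles that case. For $p < n$: each proper $\mathbb F_p$-subspace $R_i$ has $|R_i| \le |R|/p$, so the count $|R| = |\bigcup R_i| \le n\cdot |R|/p$ gives $p \le n$, and the shared element $0 \in \bigcap R_i$ upgrades this to $p < n$. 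Finally, finiteness of $S(n)$ follows from a quantitative Neumann bound on the $m_i$, bounding $\dim_{\mathbb F_p} R$ in terms of $n$ and so leaving only finitely many isomorphism classes. The main obstacle is proving finiteness of each $R/I_i$ in the descent of paragraph two, which is where the unital hypothesis does the real work.
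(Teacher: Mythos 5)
Your overall skeleton is sound and the iff characterization is handled correctly, but the proof has a genuine hole exactly where you flag ``the main obstacle.'' The assertion that each $R/I_i$ is finite (where $I_i$ is the largest two-sided ideal of $R$ contained in $R_i$) is precisely Lemma~1 of Lewin [Lew67], which the paper invokes directly; you do not prove it, and it is not a formality. Finite additive exponent plus finite additive index of $R_i$ does not by itself bound the index of the largest two-sided ideal inside $R_i$ --- one needs an actual argument (Lewin's) and it is the engine of the whole finiteness claim. Because of this, your last paragraph's claim that a ``quantitative Neumann bound on the $m_i$'' bounds $\dim_{\mathbb F_p}R$ is also not justified: a bound on $[R:\bigcap R_i]$ (which is Bhargava's Theorem 6, used by the paper to get $[R:S]\le n!$) does not bound $|R|$ without the Lewin ideal step, together with the fact that for $R\in S(n)$ the intersection of the cover contains no nonzero two-sided ideal.

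A second gap is the reduction from composite to prime-power characteristic, dismissed as ``a similar argument using coprime splittings.'' Writing $R=R_1\times R_2$ by CRT, the restriction or projection of a cover of $R$ to $R_1$ need not produce a cover of $R_1$ by \emph{proper} subrings (a subring $S_i$ can surject onto both factors). The paper handles this via Theorem 2.2 of [Wer15], which says $\sigma(\prod_p R(p))=\min_p\sigma(R(p))$; some input of that Goursat/maximal-subring type is needed and is not ``similar'' to the $pR$ argument. On the positive side, your elementary counting proof of $p<n$ (each $R_i$ is a proper $\mathbb F_p$-subspace, so $|R_i|\le |R|/p$, and the common element $0$ forces strict inequality when $p=n$) is correct and is a genuine simplification: it replaces the paper's appeal to Tomkinson's theorem that a non-cyclic $p$-group needs at least $p+1$ proper subgroups to be covered.
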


This theorem provides one explanation for the fact that, when $n=3$, the rings appearing in \cite{Maro12} all had order a power of 2. A corresponding statement for groups can be found in Theorem 5 of \cite{Bhar09}. Our next  result was the original motivation for the paper, and answers the question of which (unital) rings are the union of four subrings.

\begin{Theorem}
A unital ring $R$ has $\sigma(R)=4$ if and only if $\sigma(R)\neq 3$ and $R$ has a quotient isomorphic to one the rings in examples \ref{p=2 comm}, \ref{p=2 Noncomm}, \ref{examplecomm}, or \ref{examplenoncomm}, 
\end{Theorem}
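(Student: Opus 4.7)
The plan is to apply both directions of Theorem 1 to convert the statement into a classification of the set $S(4)$. By Theorem 1, $\sigma(R) = n$ if and only if $R$ has a quotient in $S(n)$ and no quotient in $S(m)$ for any $m < n$. Taking $n = 4$ and noting that $S(2) = \emptyset$ (no ring is the union of two proper subrings), the condition ``no quotient in $S(3)$'' coincides, again by Theorem 1, with $\sigma(R) \neq 3$. Thus the theorem reduces to showing that $S(4)$ is precisely the set of isomorphism classes of the four rings in examples \ref{p=2 comm}, \ref{p=2 Noncomm}, \ref{examplecomm}, and \ref{examplenoncomm}.

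For the classification of $S(4)$, I would begin from the structural content of Theorem 1: any $R \in S(4)$ is a finite ring of prime characteristic $p < 4$, so $p \in \{2, 3\}$. This already accounts for the shape of the list, with commutative and non-commutative examples in each characteristic. Quotient-minimality, i.e.\ the requirement that every proper quotient of $R$ have covering number strictly greater than $4$, places strong further restrictions. In particular, passing to $R/J(R)$ forces either $J(R) = 0$ or $\sigma(R/J(R)) > 4$, and analogously for every other quotient.

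The argument then splits into cases. In the semisimple case, Wedderburn gives $R \cong \prod M_{n_i}(\mathbb{F}_{q_i})$ with each $q_i$ a power of $p \in \{2,3\}$; known bounds on covering numbers of matrix algebras and products, combined with quotient-minimality (which restricts the number and the dimensions of the Wedderburn factors), cut this down to finitely many candidates. In the local case, the paper's computation of $\sigma$ for finite local rings with residue field $\mathbb{F}_p$ of prime order supplies an explicit list of local rings with $\sigma = 4$; quotient-minimality again isolates finitely many possibilities, which one can then sort by commutativity and characteristic and compare to the four examples.

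The main obstacle is the case-by-case verification carried out in the previous paragraph, together with the check that each of the four listed rings really does lie in $S(4)$: that is, that $\sigma$ equals $4$ on it and exceeds $4$ on every proper quotient, plus the negative check that nothing else appears. The local-ring computations and matrix-algebra covering bounds are the essential technical inputs. Once $S(4)$ is pinned down, the ``if'' direction is immediate: a quotient isomorphic to one of the four examples has $\sigma = 4$, so pulling back a minimal cover gives $\sigma(R) \leq 4$; combined with the hypothesis $\sigma(R) \neq 3$ and the classical fact that $\sigma(R) \geq 3$ for every coverable ring, this forces $\sigma(R) = 4$.
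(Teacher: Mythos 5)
Your opening reduction is exactly right and matches the paper: Theorem~\ref{Theorem 1} converts the statement into the claim that $S(4)$ consists precisely of the four listed rings, and verifying those four lie in $S(4)$ is done in the examples themselves. The issue is with your sketch of how to pin down $S(4)$.

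The proposed dichotomy ``semisimple (use Wedderburn) vs.\ local'' does not cover the cases that carry the real weight of the proof, and it omits the mechanism by which the paper makes the problem genuinely finite in practice. Theorem~\ref{Theorem 1} gives a bound on $|R|$ that is astronomically large ($(4!+1)^{(4!+1)^2}$) and useless for enumeration. The paper's route is different: starting from a good tuple $(R,S_1,\ldots,S_4)$ and the common intersection $S$, it uses the $[R\colon S]\le n!$ inequality of Lemma~\ref{key inequality} to force $[R\colon S]=8$ (for $p=2$) or $9$ (for $p=3$), and then introduces the left/right ``conductor'' sets $I_L(r), I_R(r)$ to bound $|S|$ itself, yielding the hard bounds $|R|\le 2^8$ and $|R|\le 3^4$. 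Only then does it run a case analysis by order. Moreover, the most laborious cases — the indecomposable, nonlocal, non-semisimple rings of orders $64$, $128$, $256$ — fall outside both halves of your dichotomy; the paper handles them with a Peirce-type decomposition $R=\bigoplus R_i \oplus M$ and arguments such as Lemma~\ref{S3cap I = Scap I lemma} and Lemma~\ref{R/J = F2 times F4 impossibility}. Without the size bounds and the indecomposable nonlocal analysis, the plan does not close: ``finitely many candidates'' is asserted but not made effective, and the rings in the gap between semisimple and local are never ruled out.

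A second, smaller gap: your final paragraph claims the ``if'' direction needs only that a cover pulls back plus $\sigma(R)\neq3$, but this implicitly uses that $\sigma(R)\geq 3$ for any coverable ring together with the identification of $S(4)$; it would be cleaner to cite the equivalence in Theorem~\ref{Theorem 1} directly, since $\sigma(R)\neq 3$ together with $\sigma(R)\neq\infty$ and the absence of a quotient in $S(3)$ is exactly what the theorem converts into $\sigma(R)=4$ once a quotient in $S(4)$ is exhibited.
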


We now outline the article's contents. In the first section, we define the sets $S(n)$ and establish some of their general properties, including the first result stated above. Here we also prove a useful lemma on covering numbers of products of rings. In the next two sections, we compute the covering numbers for all rings of order at most $p^3$, and all finite local rings with residue field of prime order. We then begin the attack on $S(4)$ in earnest, proving with relative ease the classification of those rings in $S(4)$ of characteristic 3. In the penultimate and most technical section, we classify the rings in $S(4)$ of characteristic 2. Doing this involves first obtaining the bounds $2^4\leq|R|\leq 2^8 $, and then performing a somewhat elaborate case-by-case elimination to show that no rings in $S(4)$ exist besides those already constructed. In the final section we suggest some further directions of research. 

\section{Notation}

We write $\Z_n$ for the ring of order $n$ generated by $1$, and $C_n$ for the cyclic group of order $n$. If $q$ is a prime power, then $F_q$ denotes the field with $q$ elements. For a set $X$ we write $|X|$ for its cardinality. The letter $p$ will always indicate a prime number. For the entirety of this paper, $R$ indicates a unital ring unless otherwise stated, while $S_i$ and $S$ denote subrings of $R$ which are not required to have a multiplicative unit. 

 \section{The set $S(n)$}

\begin{definition}
For $n\geq 3$, define $S(n)$ to be the collection of (isomorphism classes of) unital rings $R$ such that $\sigma(R)=n<\sigma(R/I)$ for all proper ideals $I$ of $R$. For $R\in S(n)$, with maximal subrings $S_1, \ldots, S_n$ such that $R = \bigcup\limits_{i=1}^n S_i$, call the corresponding $(n+1)$-tuple $(R, S_1, \ldots, S_n)$ a {\it good tuple}.  
\end{definition} 

If $R\in S(n)$ and $R = \bigcup\limits_{i=1}^n S_i$ for proper subrings $S_i$, the intersection $S:=\bigcap\limits_{i=1}^n S_i$ does not contain any nonzero 2-sided ideal $I$ of $R$. Otherwise we could replace $R$ and $S_i$ by $R/I$ and $S_i/I$, respectively. 
Ours is a slightly stronger notion of a good tuple then that of \cite{Maro12}, which only requires that $S$ not contain any proper 2-sided ideals. It also did not require the $S_i$ to be maximal, but that is automatic if $n=3$. 

\begin{example}
The set $S(3)$ was computed completely in \cite{Maro12}. Its two elements (we are requiring rings to have unit) are $F_2\times F_2$ and $F_2[x, y]/(x^2, xy, y^2)$ of order 4 and 8, respectively. These were distinguished by whether their maximal subrings all contained $1_R$, and also by their radicals, which have orders 0 and 4, respectively. Note that both are commutative. 
\end{example}

\begin{example} \label{p=2 comm}
Let $R=F_4\times F_4$. There are exactly four maximal subrings: $F_4\times F_2$,  $F_2\times F_4$, the subring $\{ (t,t)  \}_{t\in F_4}$, and the subring $\{ (0,0), (1,1) , (x, x+1), (x+1, x) \}$, where $x\in F_4 \setminus F_2$. Their union is $R$ and no three form a cover, so $R$ is coverable and $\sigma(R)=4$. 
Since no quotient of $R$ is coverable, $R\in S(4)$. 
\end{example}

\begin{example}\label{p=2 Noncomm}
Let $R= M_2(F_2) $. There are exactly four maximal subrings: the unique copy of $F_4$ inside $R$ 
and the stabilizers of the three lines in $F_2\times F_2$. Their union is $R$ and no three form a cover, so $R$ is coverable and $\sigma(R)=4$. Since $R$ has no proper 2-sided ideals, $R\in S(4)$. 
\end{example}

\begin{example}\label{examplecomm}
 Let $R=F_3[x, y]/(x^2, xy, y^2)$. 
There are exactly four maximal subrings: $S_t:=\{a + b t : a, b\in F_3 \}$ where $t \in \{x, y, x+y, x+2y  \}$. Their union is $R$ and no three form a cover, so $R$ is coverable and $\sigma(R)=4$. No quotient of $R$ is coverable, e.g., by Proposition \ref{resfield p prop} below, so $R\in S(4)$. 
\end{example}

\begin{example} \label{examplenoncomm}
Let $R = T_2(F_3)$, the ring of upper triangular $2\times 2$ matrices over the field with three elements. There are exactly four maximal subrings, given by the matrices of the following forms
$$\left\{ \begin{bmatrix}
a & \\ & b
\end{bmatrix}\right\}, 
\left\{\begin{bmatrix}
a & b\\  & a
\end{bmatrix}\right\}, 
\left\{\begin{bmatrix}
a+b & a  \\  & b
\end{bmatrix}\right\}, 
\left\{\begin{bmatrix}
a & b\\  & a+b
\end{bmatrix}\right\}.$$ There union is $R$ and no three of them form a cover, so $\sigma(R)=4$. 
The unique maximal proper quotient of $T_2(F_3)$ is $F_3\times F_3$, which is not coverable by \cite{Wer15}. Hence $T_2(F_3)\in S(4)$. 
\end{example}

Our study of $S(n)$ begins with a useful lemma. 

\begin{lemma} \label{key inequality}
Suppose $R$ is a unital ring with $\sigma(R)=n$. Let $S_1, \ldots, S_n$ be proper maximal subrings such that $R = S_1\cup \cdots \cup  S_n$. If $S:=\bigcap\limits_{i=1}^n S_i$, then $[R:S]\leq n!$
\end{lemma}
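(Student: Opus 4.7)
The plan is to reduce the statement to an additive-group assertion and prove the latter by induction on $n$, invoking B.~H.~Neumann-type coset-covering theorems along the way. For the reduction, observe that $S$ is an additive subgroup of $R$, so $A := R/S$ is an abelian group with proper subgroups $A_i := S_i / S$. The $A_i$ cover $A$ with trivial intersection, and the cover is irredundant: if some $A_{i_0}$ were redundant then its preimage $S_{i_0}$ would be redundant in the cover $R = \bigcup S_i$, contradicting $\sigma(R) = n$. It therefore suffices to prove the abelian-group statement: \emph{if $A$ is an abelian group with an irredundant cover by $n$ proper subgroups of trivial intersection, then $|A| \leq n!$.}

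I would proceed by induction on $n$. The base case $n = 3$ follows from Scorza's theorem applied to abelian groups: the only such $A$ is $(\Z/2)^2$, yielding $|A| = 4 \leq 3! = 6$. For the inductive step, the elementary counting inequality $|A| \leq \sum_i |A_i| - (n-1)$ (which uses $0 \in \bigcap A_i$) forces some $A_i$, relabeled as $A_n$, to satisfy $[A:A_n] \leq n$. It suffices to show $|A_n| \leq (n-1)!$, since then $|A| = [A:A_n]\cdot|A_n| \leq n \cdot (n-1)! = n!$. For the bound on $|A_n|$, examine any non-trivial coset $x + A_n$: since $(x + A_n) \cap A_n = \emptyset$, it lies inside $\bigcup_{i<n} A_i$, and each non-empty intersection $(x + A_n) \cap A_i$ is a coset of $B_i := A_n \cap A_i$. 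Translating by $-x$ exhibits $A_n$ as a union of at most $n-1$ cosets of the $B_i$, which by construction have trivial intersection $\bigcap B_i = \bigcap A_i = 0$. A quantitative coset-cover lemma in the style of Neumann (bounding the index of the intersection of the participating subgroups by the factorial of the number of cosets) then yields $|A_n| \leq (n-1)!$.

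The main technical obstacle lies in the last step: the cover of $A_n$ produced above may involve only some of the $B_i$'s, so Neumann's lemma directly controls only the intersection of the participating subgroups, which a priori could be larger than $\bigcap_i B_i = 0$. Overcoming this likely requires combining the covers arising from several non-trivial cosets of $A_n$ in $A$ --- one per index $i < n$, chosen so that $B_i$ appears --- and invoking the correct quantitative form of the coset-covering lemma so that the constant $(n-1)!$ still emerges. This bookkeeping is the heart of the proof.
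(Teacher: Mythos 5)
The paper discharges this lemma with a single citation: since no $S_i$ is contained in $\bigcup_{j\neq i} S_j$ (else fewer than $n$ subrings would cover $R$, contradicting $\sigma(R)=n$), the cover of the additive group $(R,+)$ by the subgroups $S_i$ is irredundant, and the bound $[R:S]\le n!$ is exactly Theorem~6 of \cite{Bhar09}. Your opening moves recover this reduction correctly: passing to $A=R/S$, checking that the $A_i=S_i/S$ form an irredundant cover with $\bigcap A_i=0$, and observing via the counting inequality $|A|\le\sum_i|A_i|-(n-1)$ that some $A_i$ has index at most $n$ are all sound, and the last of these is indeed the standard Neumann-type first step in arguments of this kind.

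The gap, however, is exactly where you flag it, and it is a real one rather than bookkeeping. After relabelling so that $[A:A_n]\le n$ and translating a nontrivial coset $x+A_n$ back into $A_n$, you obtain a cover of $A_n$ by at most $n-1$ cosets of the subgroups $B_i=A_i\cap A_n$ --- but only of those $B_i$ that happen to meet $x+A_n$. You therefore control $\bigcap_{i\in I}B_i$ for some possibly proper $I\subsetneq\{1,\dots,n-1\}$, not $\bigcap_{i<n}B_i=0$, and moreover your inductive hypothesis is about irredundant \emph{subgroup} covers with trivial intersection, whereas what you produce is a \emph{coset} cover that need not be irredundant. Your proposed fix, amalgamating the covers arising from several cosets so that every $B_i$ participates, does guarantee all the $B_i$ appear (irredundancy provides, for each $i<n$, an element $g_i\in A_i\setminus\bigcup_{j\neq i}A_j$, and $g_i\notin A_n$, so $B_i$ appears in the cover of $g_i+A_n$), but the amalgamated cover of $A_n$ can then involve far more than $n-1$ cosets, and no naive factorial accounting lands on the target $(n-1)!$. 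Producing the precise quantitative coset-covering statement that closes this loop is the actual content of the cited theorem; it is not a routine final step. As a roadmap your outline points in the right direction, but it does not yet constitute a proof, whereas the paper's one-line citation does.
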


\begin{proof}
Since $S_i \not\subset \bigcup_{j\neq i} S_j$, the inequality $[R:S]\leq n!$ is a special case of Theorem 6 in \cite{Bhar09}. 
\end{proof}




\begin{Theorem} \label{Theorem 1}
The set $S(n)$ is finite. If $R\in S(n)$, then $R$ is finite 
and has characteristic a prime $p<n$. If $T$ is a unital ring then $\sigma(T)=n$ if and only if $T$ has a quotient in $S(n)$ and has no quotient in $S(m)$ for any $m<n$.
\end{Theorem}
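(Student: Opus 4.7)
The plan is to show, for $R \in S(n)$ with good tuple $(R, S_1, \ldots, S_n)$ and $S := \bigcap_{i=1}^n S_i$: (i) $\mathrm{char}(R)$ is a prime $p < n$; (ii) $R$ is finite, whence $S(n)$ is finite; and (iii) the quotient characterization of rings with $\sigma = n$. Throughout I use Lemma \ref{key inequality}, which gives $[R:S] \leq n!$, along with the remark before the definition of $S(n)$ that $S$ contains no nonzero two-sided ideal of $R$.

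For (i), the exponent $m$ of the finite group $R/S$ divides $n!$, and since $mR$ is a two-sided ideal contained in $S$ it vanishes, forcing $\mathrm{char}(R) \mid n!$. If $\mathrm{char}(R)$ had two coprime divisors, the CRT idempotent decomposition gives $R \cong R_1 \times R_2$ with $\gcd(\mathrm{char}\,R_1, \mathrm{char}\,R_2) = 1$; since the only ring map between factors of coprime characteristic is zero, every subring of $R_1 \times R_2$ is a direct product $A \times B$, and a case analysis of a size-$n$ product cover yields $\min(\sigma(R_1), \sigma(R_2)) \leq n$, contradicting $\sigma(R_i) > n$ for these proper quotients. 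So $\mathrm{char}(R) = p^a$. To rule out $a \geq 2$, note that for any maximal subring $M$ the enlargement $M + pR$ is again a subring and so equals $M$ or $R$. If every $S_i \supseteq pR$, then $pR \subseteq S$ contradicts the core hypothesis, so some $S_i$ satisfies $S_i + pR = R$. Using $pS_i \subseteq S_i$ (from additive closure), an induction on $k$ promotes this to $S_i + p^k R = R$ for all $k$, and $k = a$ gives the contradiction $S_i = R$. Hence $\mathrm{char}(R) = p$; since subrings are then $\mathbb{F}_p$-subspaces and an $\mathbb{F}_p$-space of dimension $\geq 2$ cannot be covered by fewer than $p+1$ proper subspaces (while $R \not\cong \mathbb{F}_p$ because the latter is uncoverable), we have $n \geq p+1$, so $p < n$.

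The finiteness in (ii) rests on the following trick: the multiplicative closure $S \cdot S \subseteq S$ implies that, for each $s \in S$, the right-multiplication map $\rho_s : R \to R/S$, $x \mapsto xs + S$, kills $S$ and so descends to an additive endomorphism $\bar\rho_s$ of $R/S$. The assignment $s \mapsto \bar\rho_s$ embeds $S/T$ into the finite ring $\mathrm{End}_{\mathbb{Z}}(R/S)$, where $T := \{r \in R : Rr \subseteq S\}$ is the largest left ideal of $R$ contained in $S$. Hence $[R:T] = [R:S][S:T]$ is finite. The annihilator of the left $R$-module $R/T$ simplifies to $\{r \in R : RrR \subseteq S\}$, which is the largest two-sided ideal of $R$ in $S$ and hence zero. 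The left regular representation therefore embeds $R$ into the finite ring $\mathrm{End}_{\mathbb{Z}}(R/T)$, so $R$ is finite, with $|R|$ bounded by an explicit function of $n$. Only finitely many isomorphism classes of rings of bounded order exist, so $S(n)$ is finite.

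For (iii), the ``only if'' direction is immediate: a quotient $T/I \in S(m)$ forces $\sigma(T) \leq \sigma(T/I) = m$, incompatible with $\sigma(T) = n$ when $m < n$. Conversely, given $\sigma(T) = n$, fix a size-$n$ cover of $T$ by maximal subrings with intersection $S_T$ and apply the same machinery: passing first to $T/mT$ for $m$ the exponent of $T/S_T$, then modding out by the largest two-sided ideal of $T$ contained in $S_T$, produces a finite quotient $T'$ of $T$ with $\sigma(T') = n$ (at each step the push-forward cover remains a proper cover by $n$ subrings). Since $T'$ is finite, among the finitely many ideals $J$ of $T$ containing the kernel of $T \to T'$ with $\sigma(T/J) = n$ there is a maximal one, and $T/J$ lies in $S(n)$. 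The main obstacle is the finiteness argument in (ii): direct attempts to extract a nonzero ideal of $R$ sitting inside $S$ fail because $S$ need not be one-sided, and the descent of $\rho_s$ to an endomorphism of $R/S$, which is available precisely because $S$ is multiplicatively closed, is the crucial move that unlocks the argument.
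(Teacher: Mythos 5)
Your proof is correct, and it diverges from the paper's at each of the three substantive steps, in each case replacing an external citation with a self-contained argument. For finiteness, where the paper cites Lemma 1 of Lewin \cite{Lew67} to produce a two-sided ideal of bounded index inside $S$, you give a direct two-step embedding: right multiplication by elements of $S$ descends to an injection $S/T \hookrightarrow \End_{\Z}(R/S)$ with $T = \{r : Rr \subseteq S\}$ the largest left ideal of $R$ inside $S$, and the left regular representation then embeds $R$ into $\End_{\Z}(R/T)$ once the largest two-sided ideal $\{r : RrR \subseteq S\}$ is seen to be zero. This is in effect an inline proof of the case of Lewin's lemma actually used; it buys self-containment at the cost of a looser explicit bound. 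For the reduction to prime characteristic, where the paper invokes the primary decomposition $R=\prod_p R(p)$ together with Theorem 2.2 of \cite{Wer15}, you instead use CRT and the observation that subrings of a product of rings of coprime characteristics split as products (since the two coordinate idempotents are integer multiples of $1_R$), which reduces the matter to a short direct argument about covers of products; again this inlines the needed special case of the cited result. Your $p<n$ argument via covering an $\Fp$-vector space of dimension $\geq 2$ by proper subspaces is a modest simplification of the paper's appeal to Tomkinson's $p$-group covering bound \cite{Tom97}, available because once the characteristic is $p$ every subring is an $\Fp$-subspace, and it has the minor advantage of not presupposing finiteness. One small slip in (iii): both of your labeled ``only if'' and ``Conversely'' arguments establish halves of the forward implication ($\sigma(T)=n$ implies the quotient condition); the reverse implication, though immediate from what you show, is never explicitly addressed---but the paper's proof leaves this implicit as well, so the two treatments are on an equal footing there.
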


\begin{proof}
Let $(R, S_1, \ldots, S_n)$ be a good tuple, and $S=\bigcap\limits_{i=1}^n S_i$. By Lemma 1 in \cite{Lew67} and lemma \ref{key inequality}, there is an ideal $I$ of $R$ contained in $S$ with $$[R:I] < ([R:S]+1)^{([R:S]+1)^2}  \leq (n!+1)^{ (n!+1)^2 }.$$ Since $S$ contains no nonzero ideals, $|R|< (n!+1)^{ (n!+1)^2 }$, so $R$ and $S(n)$ are finite. 

For each prime $p$, let $R(p)$ be the 2-sided ideal of $R$ which is killed by some power of $p$. Since $R$ is finite, $R = \prod\limits_{p} R(p)$. We have  $\sigma(R) = \min_p \{\sigma(R(p)) \}$ by Theorem 2.2. of \cite{Wer15}. Clearly $R$ surjects onto each $R(p)$, and $\sigma(R)<\sigma(R/I)$ for all proper ideals $I$, so $R= R(p)$ for some $p$. 

Suppose $R$ has characteristic $p^f$ and $pR\not\subset S_i$. Then by maximality of $S_i$, we have $R = S_i + pR$. So we may write $r = s + p r_1$, some $r_1\in R$. Then $r_1 = s_1 + p r_2$, and $r_2 = s_2 + p r_3$, and so on. Plugging back in and using $p^f = 0$ gives $r = s + ps_1 + p^2 s_2 + \cdots + p^{f-1}s_{f-1}$. The right side lies in $S_i$, which implies $R = S_i$. This contradiction implies $pR\subset S_i$ for all $i$, hence $pR\subset S$, so $pR=0$ and $R$ has characteristic $p$. 

Let $k=[R:S]>1$. The 2-sided ideal $kR$ is contained in $S$, hence is zero. The characteristic of $R$ is $p$, so $p$ divides $k$, which is at most $n!$, and thus $p\leq n$. In fact, $p<n$. This follows from a property of covering numbers for groups: if $G$ is a finite non-cyclic $p$-group, then the minimum number of subgroups required to cover it is at least $p+1$; see \cite{Tom97}. The abelian $p$-group $R/S$ is non-cyclic since it is covered by the proper subgroups $S_1/S, \ldots, S_n/S$. Since $R/S$ is not a union of $p$ subgroups, $R$ is not a union of $p$ subrings.

Finally, let $T$ be a unital ring with $\sigma(T)=n$. The above argument shows that $T$ has a finite quotient $T/I$ with $\sigma(T/I)=n$. If $T/I\in S(n)$ then we're done. Otherwise $T/I$ has a proper quotient with covering number $n$. Since $T/I$ is finite, we can iterate this process until we arrive at a minimal quotient with covering number $n$, and this is in $S(n)$. 
\end{proof}

We will prove one more general result about the potential commutative rings inside the sets $S(n)$. First we need a general lemma about covering numbers. 

\begin{lemma} \label{F2 times R lemma}  \label{stability of covering number under F2}
Suppose that $R_1$ and $R_2$ are finite rings with no common nonzero quotient ring. Then $\sigma(R_1\times R_2) = \min (\sigma(R_1), \sigma(R_2) )$. 
\end{lemma}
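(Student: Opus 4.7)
The plan is to establish the two inequalities separately. The upper bound $\sigma(R_1 \times R_2) \leq \min(\sigma(R_1), \sigma(R_2))$ is immediate: each $R_i$ is a quotient of $R_1 \times R_2$ under the natural projection $\pi_i : R_1 \times R_2 \to R_i$, and covering numbers are monotone under quotients (as noted at the start of this section).

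For the lower bound the crux is a structural claim: under the no-common-quotient hypothesis, every proper subring $S \subseteq R_1 \times R_2$ must satisfy $\pi_1(S) \subsetneq R_1$ or $\pi_2(S) \subsetneq R_2$. I would prove this by contrapositive, via a Goursat-style argument. Assuming both projections of $S$ are surjective, set $I_1 = \{a \in R_1 : (a, 0) \in S\}$ and $I_2 = \{b \in R_2 : (0, b) \in S\}$. The surjectivity of the projections lets one verify that $I_1$ and $I_2$ are two-sided ideals of $R_1$ and $R_2$ respectively, and that $I_1 \times I_2 \subseteq S$. The two projections then descend to isomorphisms $R_1/I_1 \cong S/(I_1 \times I_2) \cong R_2/I_2$, exhibiting a common quotient of $R_1$ and $R_2$. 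By hypothesis this quotient is zero, so $I_1 = R_1$ and $I_2 = R_2$, forcing $S \supseteq I_1 \times I_2 = R_1 \times R_2$ and contradicting properness.

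The lemma then finishes by a short combinatorial argument. Given a cover $R_1 \times R_2 = S_1 \cup \cdots \cup S_n$ by proper subrings, partition $\{1, \ldots, n\}$ into $I = \{i : \pi_1(S_i) \subsetneq R_1\}$ and its complement $J$; the structural claim forces $\pi_2(S_j) \subsetneq R_2$ for $j \in J$. Setting $A_i = \pi_1(S_i)$ for $i \in I$ and $B_j = \pi_2(S_j)$ for $j \in J$, we have $S_i \subseteq A_i \times R_2$ and $S_j \subseteq R_1 \times B_j$. If neither $\bigcup_{i \in I} A_i = R_1$ nor $\bigcup_{j \in J} B_j = R_2$ held, then choosing $r_1 \in R_1 \setminus \bigcup A_i$ and $r_2 \in R_2 \setminus \bigcup B_j$ would give an uncovered point $(r_1, r_2)$, a contradiction. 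Hence one of the unions is the full ring: say $\bigcup_{i \in I} A_i = R_1$, producing a cover of $R_1$ by $|I| \leq n$ proper subrings so that $\sigma(R_1) \leq n$; symmetrically $\sigma(R_2) \leq n$ in the other case. Either way $n \geq \min(\sigma(R_1), \sigma(R_2))$. The main obstacle is the structural claim — specifically the Goursat identification $R_1/I_1 \cong R_2/I_2$ — which is precisely where the no-common-quotient hypothesis is invoked; once this is in hand, the remainder is a routine pigeonhole.
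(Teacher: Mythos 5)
Your proof is correct, and it takes a genuinely different (and somewhat more self-contained) route than the paper's. The paper begins by invoking Theorem 2.2 of \cite{Wer15}, which reduces the lemma to showing that every \emph{maximal} subring of $R_1\times R_2$ has the form $M_1\times R_2$ or $R_1\times M_2$; this forces the paper into a case analysis that repeatedly uses maximality to pin down the shape of $T$ before arriving at the Goursat contradiction. You instead prove the cleaner unconditional statement that every proper subring $S$ has $\pi_1(S)\subsetneq R_1$ or $\pi_2(S)\subsetneq R_2$, where the Goursat argument applies without any maximality hypothesis (if $I_1=R_1$ and $\pi_2(S)=R_2$ one already gets $S=R$, so the only surviving case is the one where both $I_i$ are proper and $R_1/I_1\cong R_2/I_2$ is a nonzero common quotient). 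You then finish with a direct pigeonhole argument in place of the cited Werner theorem: if neither $\bigcup_{i\in I}\pi_1(S_i)$ covers $R_1$ nor $\bigcup_{j\in J}\pi_2(S_j)$ covers $R_2$, a point $(r_1,r_2)$ escapes the cover. The tradeoff is straightforward: the paper outsources the combinatorics to a black-box theorem at the cost of a longer structural case analysis on maximal subrings, while your version does a short elementary pigeonhole but proves a slightly stronger structural claim. Both hinge on the same Goursat identification $R_1/I_1\cong R_2/I_2$, which is where the no-common-quotient hypothesis enters.
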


\begin{proof}
By Theorem 2.2. of \cite{Wer15}, it suffices to show that every maximal subring of $R:=R_1\times R_2$ has the form $M_1\times R_2$ or $R_1\times M_2$, where $M_i\subset R_i$ is a maximal subring. So let $T$ be a maximal subring of $R$. If we write $p_1: R \to R_1$ for the projection map onto the first factor, then clearly $T \subset p_1^{-1}(p_1(T)) = p_1(T)\times R_2$. If $p_1(T)\neq R_1$, then maximality of $T$ forces $T = p_1(T)\times R_2$, and $p_1(T)$ must be maximal since $T$ is. Thus we may assume $p_1(T)=R_1$, and similarly that $p_2(T) = R_2$, where $p_2$ is projection onto the second factor. 

Now define $I_1:=\{r\in R_1: (r, 0)\in T  \}$ and $I_2:=\{r\in R_2 : (0, r)\in T \}$. Suppose $r\in I_1$ and $r_1\in R_1$. Since $p_1(T)= R_1$, there exists some $r'\in R_2$ with $(r_1, r')\in T$. Then $T\ni (r, 0)(r_1, r') = (r r_1, 0)$ and similarly $T\ni (r_1 r, 0)$. Thus $I_1$ is a 2-sided ideal of $R_1$. If $I_1 = R_1$, then $T\supset R_1\times 0$, which implies $T = R_1\times I_2$, and then $I_2$ must be maximal since $T$ is. So we may assume that $I_1$ is a proper ideal of $R_1$, and similarly that $I_2$ is a proper ideal of $R_2$. Note that $I_1\times I_2\subset T$, and we may view $T/(I_1\times I_2)$ as a subring of $R/I_1\times R/I_2$. 

Define $f:R_1/I_1\to R_2/I_2$ by the condition $(r_1+I_1,  f(r_1 + I_1)  )\in T/(I_1\times I_2)$. Similarly define $g:R_2/I_2\to R_1/I_1$ by the condition $(g(r_2+I_2), r_2+I_2)\in  T/(I_1\times I_2)$. It is routine to check that $f$ and $g$ are well-defined ring homomorphisms and are inverse to one another, so $R_1/I_1\cong R_2/I_2$. This contradicts our initial assumption, and since all other avenues led to $T$ having the desired form, this completes the proof. 
\end{proof}

\begin{corollary}
Let $R\in S(n)$ be a commutative ring. Then all local factors of $R$ have the same residue field. 
\end{corollary}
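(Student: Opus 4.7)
The plan is to decompose $R$ into its local factors and then invoke Lemma \ref{F2 times R lemma} to rule out the possibility of two non-isomorphic residue fields appearing. By Theorem \ref{Theorem 1}, $R$ is finite, and since $R$ is a finite commutative ring it decomposes (uniquely, up to reordering) as $R \cong R_1 \times \cdots \times R_m$ with each $R_i$ a finite local ring whose residue field we denote by $k_i$.

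Suppose for contradiction that the $k_i$ are not all isomorphic. Fix one isomorphism class, say that of $k_1$, and partition the factors: set $A = \prod_{k_i \cong k_1} R_i$ and $B = \prod_{k_i \not\cong k_1} R_i$, so that $R \cong A \times B$ with both $A$ and $B$ nonzero. The heart of the argument is to verify that $A$ and $B$ admit no common nonzero quotient ring. An ideal of a finite product of commutative rings decomposes as a product of ideals of the factors, and a nonzero quotient of a local ring is again local with the same residue field. Consequently, every nonzero quotient of $A$ decomposes as a product of local rings all having residue field $\cong k_1$, while every nonzero quotient of $B$ is a product of local rings, none of which has residue field $\cong k_1$. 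By the uniqueness of the local decomposition of a finite commutative ring, no nonzero ring can be a quotient of both $A$ and $B$.

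At this point Lemma \ref{F2 times R lemma} gives $\sigma(R) = \min(\sigma(A), \sigma(B))$, and without loss of generality $\sigma(A) \leq \sigma(B)$, so $\sigma(R) = \sigma(A)$. But $R/(0\times B) \cong A$ exhibits $A$ as a proper nontrivial quotient of $R$ whose covering number is not strictly greater than $\sigma(R)$, contradicting the defining property of $S(n)$. The only nonroutine point is the verification that $A$ and $B$ have no common nonzero quotient, which reduces to the uniqueness of the local decomposition of a finite commutative ring together with the fact that quotients of local rings preserve the residue field; everything else is a direct invocation of Lemma \ref{F2 times R lemma} and the definition of $S(n)$.
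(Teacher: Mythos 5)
Your proof is correct and takes essentially the same approach as the paper: decompose $R$ into factors grouped by residue field, observe that distinct groups have no common nonzero quotient (since quotients of local rings preserve the residue field), invoke Lemma \ref{F2 times R lemma} to pin down $\sigma(R)$, and derive a contradiction with the defining property of $S(n)$ from the existence of a proper quotient with the same covering number. The only cosmetic difference is that you use a two-way split $R \cong A \times B$ while the paper groups into $k$ blocks by residue field, so your application of the lemma is a single step rather than an iteration.
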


\begin{proof}
Since $R$ is commutative and finite, it is a product of local rings. By grouping the factors in $R$, we may write $R = R_1\times \cdots R_k$ where each $R_i$ is a product of local rings with residue field $F_{q_i}$, with $q_i\neq q_j$ if $i\neq j$. Since quotients of local rings are local with the same residue field, we conclude that $R_i$ and $R_j$ have no common quotients if $i\neq j$. By Lemma \ref{F2 times R lemma}, $\sigma(R) = \min_i ( \sigma(R_i)  )$. Since $R$ surjects onto each $R_i$, the definition of $S(n)$ implies that $R=R_i$ for some index $i$. 
\end{proof}

We now consider the question of which rings are unions of four, but no fewer, proper subrings. Thus we wish to determine the set $S(4)$. By Theorem \ref{Theorem 1}, if $R\in S(4)$ then $R$ has order $p^t$ for $p\in \{2 ,3\}$ and $t\geq 1$. The rest of this paper is devoted to proving the following theorem.

\begin{theorem} \label{Theorem 2}
Let $R\in S(4)$. 

a) If $|R|=3^t $ then $R$ is isomorphic to the ring in example \ref{examplecomm} or \ref{examplenoncomm}.

b) If $|R|=2^t$, then $R$ is isomorphic to the ring in example \ref{p=2 comm} or \ref{p=2 Noncomm}. 
\end{theorem}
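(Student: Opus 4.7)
The plan is to analyze any $R\in S(4)$ through its intersection subring $S=\bigcap_{i=1}^{4} S_i$. By Lemma \ref{key inequality}, $[R:S]\le 24$; by Theorem \ref{Theorem 1}, $R$ has prime characteristic $p$, so the additive group $R/S$ is elementary abelian of exponent $p$. Since a cyclic $p$-group is not coverable by its proper subgroups (they all lie in its unique maximal one), $R/S$ cannot be cyclic, and hence $[R:S]\ge p^2$.

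For part (a), with $p=3$, the only power of $3$ in $[9,24]$ is $9$, so $[R:S]=9$ and $R/S\cong F_3\times F_3$. This group has exactly four proper nonzero subgroups (the four lines), so each $S_i$ has index $3$ in $R$, and the four images $S_i/S$ run through these lines. Next one shows $|S|=3$, hence $|R|=27$. Pick $x,y\in R$ whose residues form an $F_3$-basis of $R/S$; then modulo $S$ the four subrings are $F_3 x$, $F_3 y$, $F_3(x+y)$, and $F_3(x+2y)$. Multiplicative closure of each $S_i=S+F_3 t_i$ forces $t_i^2$, $s t_i$, and $t_i s$ (for $s\in S$) to lie in $S+F_3 t_i$, and comparing these constraints across the four values of $t_i$ forces the $S$-action on $F_3 x+F_3 y$ to be trivial up to a two-sided ideal. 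Since $S$ contains no nonzero two-sided ideal of $R$, this yields $S=F_3\cdot 1$ and $|R|=27$. A direct enumeration of unital rings of order $27$ and characteristic $3$, together with a covering-number check for each isomorphism class, singles out only examples \ref{examplecomm} and \ref{examplenoncomm}.

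For part (b), with $p=2$, $[R:S]\in\{4,8,16\}$. The upper bound $|R|\le 2^8$ is obtained by bounding $|S|\le 2^4$, using that $S$ contains no nonzero ideal of $R$ together with an analysis of how $S$ embeds, modulo its annihilator in $R/S$, into the endomorphisms of $R/S$. The lower bound $|R|\ge 2^4$ follows by directly examining char-$2$ unital rings of order at most $8$, which are either uncoverable, in $S(3)$, or have covering number exceeding $4$. Case analysis then proceeds on $[R:S]$ and on the Jacobson radical $J$ of $R$: for $J\neq 0$ we need $\sigma(R/J)>4$, which, combined with Lemma \ref{F2 times R lemma}, restricts $R/J$ severely. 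The commutative subcase reduces via the product-of-local-rings decomposition to the local case, while the noncommutative subcase forces $R/J$ to involve $M_2(F_2)$ as its only nonscalar factor. A case-by-case elimination within each branch leaves only examples \ref{p=2 comm} and \ref{p=2 Noncomm}.

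The principal obstacle is the case-by-case elimination in part (b): even with $|R|\le 2^8$ a substantial finite list of candidate rings remains, and each must be ruled out by showing either $\sigma(R)\neq 4$ or the existence of a proper coverable quotient. Particularly delicate is the noncommutative regime with nontrivial Jacobson radical, where one must verify that every extension of $M_2(F_2)$ by a radical ideal either pushes $\sigma$ above $4$ or admits a quotient already in $S(3)$.
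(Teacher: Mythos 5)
Your outline matches the paper's broad strategy (analyze $S = \bigcap S_i$, bound $[R:S]$ and $|S|$, then classify), but there are two concrete gaps that would need to be filled.

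In part (a), the step ``this yields $S = F_3\cdot 1$ and $|R| = 27$'' is not justified by the argument you sketch, and the paper does not obtain it. The paper defines $I_R(r) = \{s\in S : sr\in S\}$ and $I_L(r)$, proves they are independent of $r$, that $[S:I_R(r)] = [S:I_L(r)] = 3$, and that $I_R(r)\cap I_L(r) = 0$. This gives $3 \le |S| \le 9$, so $|R| = 27$ \emph{or} $81$. The gap between ``$|S|\le 9$'' and ``$|S| = 3$'' is real: for noncommutative $R$ one cannot conclude $I_R = I_L$, so $|S| = 9$ is not excluded by this reasoning. The paper rules out $|R| = 81$ with a separate proposition that runs through the Derr--Orr--Peck classification of noncommutative rings of order $p^4$, treating each radical size $|J| \in \{3, 9, 27\}$ and exhibiting a proper coverable quotient or wrong covering number in every case. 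Your argument that ``comparing constraints across the four $t_i$ forces the $S$-action to be trivial up to a two-sided ideal'' would have to be made precise, and as stated it appears to conflate what happens on the right with what happens on the left.

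In part (b), two assertions are off. First, the paper shows $[R:S] = 8$ exactly (not merely $\in\{4,8,16\}$): $[R:S]=4$ fails because $C_2^2$ has only three proper nontrivial subgroups, and $[R:S] = 16$ is excluded by a careful analysis of how four proper subgroups of $C_2^4$ could cover it while intersecting trivially. Pinning $[R:S]=8$ is what makes the rest of the analysis tractable. Second, your bound $|S|\le 2^4$ appears to be an overclaim: the paper's argument (via $[S:I_R]\le 4$, $[S:I_L]\le 4$, $|I_R\cap I_L|\le 2$) yields $|S|\le 2^5$, hence $|R|\le 2^8$, and the paper then must laboriously eliminate $|R| \in \{64, 128, 256\}$ via indecomposability, non-locality, and a Peirce-decomposition case analysis $R = \bigl(\bigoplus_i R_i\bigr)\oplus M$. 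If you truly had $|S|\le 2^4$ you would not need to treat order $256$ at all, which would be a substantial improvement --- but the mechanism you describe (``how $S$ embeds into endomorphisms of $R/S$ modulo its annihilator'') is not developed enough to check. As you acknowledge, the bulk of part (b) is the case-by-case elimination, which the proposal leaves entirely open; that elimination is genuinely the hardest part of the paper and is not a routine computation.
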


 \section{Covering numbers for unital rings of order $p^3$}
 
 We will compute covering numbers of all unital rings of order $p^3$, using the classification given in \cite{Antipkin}. First, we prove a lemma about rings of order $p^2$. 
 
 \begin{lemma} \label{$p^2$ coverables}
 Among unital rings of order $p^2$, only $F_2\times F_2$ is coverable. 
 \end{lemma}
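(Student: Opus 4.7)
My plan is to invoke the classification of unital rings of order $p^2$: up to isomorphism these are exactly the four rings $F_{p^2}$, $\Z_{p^2}$, $F_p[x]/(x^2)$, and $F_p\times F_p$. For each I will enumerate the proper subrings and decide whether their union is all of $R$. The key structural observation is that a proper subring $S\subsetneq R$ is in particular an additive subgroup of $R$, so by Lagrange $|S|\in\{1,p\}$; the only candidates beyond $\{0\}$ are the one-dimensional $F_p$-subspaces (``lines'') that are also closed under multiplication.

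I would dispose of the three local rings $F_{p^2}$, $\Z_{p^2}$, and $F_p[x]/(x^2)$ first. For $F_{p^2}$, a line $F_p\cdot\alpha$ is closed under multiplication iff $\alpha^2=c\alpha$ for some $c\in F_p$, and this forces $\alpha\in F_p$, so $F_p$ is the unique proper subring of order $p$. For $\Z_{p^2}$, the additive group is cyclic of order $p^2$, so the unique subgroup of order $p$ is $pR$, which is already an ideal. For $F_p[x]/(x^2)$, a brief computation using $x^2=0$ shows that the only lines closed under multiplication are $F_p\cdot 1$ and $F_p\cdot x$. In all three cases the union of proper subrings has at most $2p-1$ elements, and since $(p-1)^2>0$ this is strictly less than $p^2$ for every prime $p$; hence none of these rings is coverable.

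Finally I would treat $R=F_p\times F_p$. Writing an arbitrary line as $F_p\cdot(a,b)$, the closure condition $(a^2,b^2)\in F_p\cdot(a,b)$ isolates exactly three lines: $F_p\times 0$, $0\times F_p$, and the diagonal $\{(a,a):a\in F_p\}$. Their union has $3p-2$ elements, and $3p-2=p^2$ precisely when $(p-1)(p-2)=0$; for $p\geq 3$ the inequality is strict. Thus $F_2\times F_2$ is coverable (by the three lines above) while no other unital ring of order $p^2$ is. I do not anticipate any real obstacle; the only subtlety is that subrings here need not contain $1_R$, so one must genuinely consider \emph{all} additive lines before discarding them via the multiplicative-closure check.
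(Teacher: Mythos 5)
Your proof is correct. It takes a genuinely different route from the paper's: you enumerate, ring by ring, all proper subrings (necessarily trivial or ``lines'' $F_p\cdot\alpha$ closed under multiplication) and show by a direct count that their union falls short of $p^2$ except for $F_2\times F_2$. The paper instead exhibits, for each of the three local rings and for $F_p\times F_p$ with $p>2$, a single element that generates the whole ring (namely $1$, any $\alpha\notin F_p$, $1+t$, and $(1,2)$ respectively); since any subring containing such a generator is all of $R$, no cover by proper subrings can exist. The paper's argument is shorter and requires only one felicitous choice per ring, whereas yours is more uniform and yields extra information for free: the full subring lattice, and hence the exact covering number $\sigma(F_2\times F_2)=3$ rather than just coverability. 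Your observation that a proper subring has order dividing $p^2$ and strictly smaller, hence is a line or $\{0\}$, is the right structural reduction, and your closure computations (e.g.\ $\alpha^2=c\alpha$ forcing $\alpha\in F_p$ in $F_{p^2}$, and the count $3p-2$ versus $p^2$ in $F_p\times F_p$) are all sound. Either proof would serve; the paper's is the more economical, yours the more self-contained and informative.
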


 \begin{proof}
 For a prime $p$, the four unital rings of order $p^2$ are $\Z_{p^2}$, $F_{p^2}$, $F_p[t]/(t^2)$, and $F_p\times F_p$. The first of these is not coverable since it is generated by 1. The second is not coverable since it is generated by any element not in $F_p$. The third is not coverable since it is generated by $ 1+t$. 
 The last is not coverable if $p>2$ since it is generated by $(1,2)$. It is easy to see that $F_2\times F_2$ is coverable and $\sigma(F_2\times F_2)=3$.
 \end{proof}
 
 \begin{corollary}
If $R$ is a coverable unital ring of order $p^3$ that doesn't surject onto $F_2\times F_2$, then $R\in S(\sigma(R))$. 
 \end{corollary}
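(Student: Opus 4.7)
The plan is to unwind the definition of $S(n)$ and use the preceding lemma as a hammer. Set $n:=\sigma(R)$; since $|R|=p^3$ is finite, any cover has at most $2^{p^3}$ members, so $n$ is finite and at least $3$. The goal is to verify that $\sigma(R/I)>n$ for every nonzero proper ideal $I$ of $R$.

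Because $|R|=p^3$ and $0\neq I\neq R$, the quotient $R/I$ has order either $p$ or $p^2$. In the former case, $R/I\cong F_p$, a field, which is not coverable since it is generated by its unit; hence $\sigma(R/I)=\infty$. In the latter case, Lemma \ref{$p^2$ coverables} says the only coverable unital ring of order $p^2$ is $F_2\times F_2$. The hypothesis on $R$ precisely rules this out: the quotient map $R\to R/I$ is a surjection, so if $R/I\cong F_2\times F_2$ then $R$ would surject onto $F_2\times F_2$, contradicting the assumption. Therefore $R/I$ is not coverable and $\sigma(R/I)=\infty$ again.

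In both cases $\sigma(R/I)=\infty>n$, so by definition $R\in S(n)=S(\sigma(R))$. There is no serious obstacle here; the corollary is essentially a bookkeeping statement that packages Lemma \ref{$p^2$ coverables} together with the observation that $F_p$ is uncoverable, so that the classification of coverable rings of order $p^3$ feeds directly into the study of $S(n)$.
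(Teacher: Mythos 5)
Your proof is correct and is exactly the bookkeeping argument the paper leaves implicit (the corollary is stated without a written proof, as an immediate consequence of Lemma \ref{$p^2$ coverables}): proper nonzero quotients have order $p$ or $p^2$, order-$p$ quotients are fields and hence uncoverable, and the only coverable order-$p^2$ unital ring is $F_2\times F_2$, excluded by hypothesis.
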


 \begin{lemma} \label{$p^3$ coverables}
 The following is a complete list of coverable unital rings of order $p^3$, with associated covering number $\sigma$:
 
 I) $T_2(F_p)$, the ring of upper triangular $2\times 2$ matrices over $F_p$, with $\sigma = p+1$
  
  II) $F_p[x, y]/(x^2, xy, y^2)$, with $\sigma =p+1$ 
  
  III) $F_3\times F_3\times F_3$, with $\sigma = 6$
  
  IV) $F_2\times F_2\times F_2$, with $\sigma =3$
  
  V) $F_2\times F_2[t]/(t^2)$, with $\sigma =3$
  
  VI) $F_2\times \Z_4$, with $\sigma =3$

 \end{lemma}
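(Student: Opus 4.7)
My plan is to go through Antipkin's classification of unital rings of order $p^3$ case by case, and for each either show it is not coverable or compute $\sigma$ explicitly. First I would dispose of every ring generated by a single element: no such ring is coverable. This handles $\Z_{p^3}$, $F_{p^3}$, and the Galois-type rings with residue field of degree two over $F_p$, since in each case a single generator is obtained by lifting a generator of the residue field.

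Next I would handle products using Lemma \ref{F2 times R lemma}. When the two factors share no common nonzero quotient (e.g., $F_p \times F_{p^2}$), the covering number is the minimum of the factor covering numbers, which is infinite when both factors are non-coverable. For products $F_p \times R_2$ where $R_2$ has residue field $F_p$, I would split on $p$. When $p = 2$, the product surjects onto $F_2 \times F_2$, so the inequality $\sigma(R) \leq \sigma(R/I) = 3$ combined with the trivial lower bound $\sigma(R) \geq 3$ forces $\sigma(R) = 3$, yielding cases IV, V, and VI. When $p$ is odd, I would exhibit an explicit single generator (e.g., for $F_p \times \Z_{p^2}$, an element of the form $(a, u)$ with $a$ a multiplicative generator of $F_p^\times$ and $u$ a lift of $1 + p$), thereby showing the product is not coverable. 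The only semisimple ring left over is $F_3 \times F_3 \times F_3$, which is case III.

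For the three remaining coverable rings I, II, III, I would verify both the upper bound (an explicit cover) and a matching lower bound. For case I ($T_2(F_p)$), the $p+1$ maximal subrings generalize example \ref{examplenoncomm}: the diagonal subring together with $p$ subrings parameterized by $\lambda \in F_p$, corresponding to lines in the projective line attached to the radical quotient. For case II, the $p+1$ maximal subrings are $S_t = F_p + F_p \cdot t$ with $t$ ranging over the $(p^2 - 1)/(p - 1) = p + 1$ projective directions in the maximal ideal $m = F_p x + F_p y$; inclusion-exclusion gives $|\bigcup S_t| = (p+1)(p^2 - p) + p = p^3$, confirming the cover, and each element of $m \setminus \{0\}$ lies in a unique $S_t$, so all $p+1$ are needed. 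For case III, the six maximal subrings of $F_3 \times F_3 \times F_3$ are the three coordinate-removal subrings $F_3 \times F_3 \times 0$, $F_3 \times 0 \times F_3$, $0 \times F_3 \times F_3$ together with the three pair-diagonal subrings $\{(a, a, b)\}$, $\{(a, b, a)\}$, $\{(b, a, a)\}$; witness elements such as $(1, 2, 0)$, which lies only in $F_3 \times F_3 \times 0$, and $(1, 2, 2)$, which lies only in $\{(b, a, a)\}$, force all six subrings to appear in any cover, giving $\sigma = 6$.

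The main obstacle is ensuring completeness in two directions: verifying that Antipkin's classification is invoked correctly so that no ring is missed, and that in each coverable case the lower bound actually matches the upper bound so $\sigma$ is computed, not just bounded above. The hardest individual piece is the lower bound in case III, where one must enumerate all six maximal subrings and produce witness elements showing each is indispensable; the broader difficulty is simply that the case analysis is long and demands disciplined bookkeeping over the entire Antipkin list.
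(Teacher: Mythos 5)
Your overall strategy is the same as the paper's — march through the Antipkin classification, kill non-coverable rings with a single-generator argument, use the product lemma for decomposables, and exhibit explicit covers for the rest — but there is a genuine error in how you enumerate and dispose of the indecomposable local rings.

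You write that single-generator arguments handle "$\Z_{p^3}$, $F_{p^3}$, and the Galois-type rings with residue field of degree two over $F_p$, since in each case a single generator is obtained by lifting a generator of the residue field." No local ring of order $p^3$ has residue field $F_{p^2}$: if $J\neq 0$ then $J/J^2$ is a nonzero $F_{p^2}$-vector space, forcing $|R|\geq p^4$, so the only residue fields that can occur at order $p^3$ are $F_p$ and $F_{p^3}$ (the latter only for $F_{p^3}$ itself). What you are actually missing are the indecomposable local rings $F_p[x]/(x^3)$ and the family $\Z_{p^2}[x]/(px,\,f(x))$ with $f\in\{x^2,\ x^2-p,\ x^2-kp\}$, all of which have residue field $F_p$. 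For those, "lift a generator of the residue field" produces $1_R$, which generates only $\Z\cdot 1_R$ (of index $p$), not the whole ring; a correct single generator is a unit like $1+x$, and verifying this is a small but necessary computation. Alternatively, the paper sidesteps the computation for the $C_p\times C_{p^2}$ rings by observing that $\sigma(R)=\sigma(R/pR)=\sigma(F_p[x]/(x^2))=\infty$, which you might prefer. You should also state explicitly what rules out $F_p\times F_p\times F_p$ for $p\geq 5$; your product reduction only addresses $F_p\times R_2$ with $R_2$ local, so $F_p^3$ for $p\geq 5$ slips through the cases as written (an explicit generator such as $(1,a,a^2)$ with $a$ a generator of $F_p^\times$, or the citation to Werner that the paper uses, would close this). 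The rest of the plan is sound: the covers you describe for $T_2(F_p)$, $F_p[x,y]/(x^2,xy,y^2)$, and $F_3^3$ are correct and give the right values, and the witness-element arguments for minimality work once one checks that each witness generates its claimed maximal subring, so that no other proper subring can contain it. The paper instead cites external results for cases I and III, so your explicit treatment is a legitimate alternative, just more labor.
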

 \begin{proof}

 The ring $T_2(F_p)$ 
 is the unique noncommutative unital ring of order $p^3$, for any prime $p$, by \cite{Eld68}. It was shown in \cite{Wer19} that $\sigma(T_2(F_p))=p+1$. We now assume that $R$ is commutative of order $p^3$. Let $\sigma = \sigma(R)$. 
 
Suppose first that $R$ is decomposable, so $R=F_p\times T$ where $T$ has order $p^2$. If $T=F_{p^2}$, then $R$ is not coverable by Lemma \ref{F2 times R lemma}. If $T=F_p\times F_p$ then, by Theorems 3.5. and 5.3. of \cite{Wer15}, $R$ is coverable if and only if $p\leq 3$, in which case $\sigma = p +\binom{p}{2}$. If $R=F_p \times \Z_{p^2}$, then $R$ is not coverable if $p>2$ since it is generated by $ a= (1, 2)$: we have $a- a^{p(p-1)}  = (0, 1)$ and $ 2a^{p(p-1)} -a = (1,0)$. If $p=2$, this is coverable since it surjects onto $F_2\times F_2$, so $\sigma =3$. If $R=F_p\times F_p[t]/(t^2)$, it is not coverable if $p>2$ since it is generated by $a=(1, t-1)$: we have $a-a^p = (0, t)$ and $\frac{p+1}{2}(a^p + a^{2p}) = (1, 0)$, which together with $a$ give all of $R$. If $p=2 $ this is coverable since it surjects to $F_2\times F_2$, so $\sigma =3$.

Assume now that $R$ is indecomposable. If the additive group of $R$ is $C_{p^3}$, then $R$ is not coverable since no proper subring contains 1. If the additive group of $R$ is $C_p \times C_{p^2}$, then $R = \Z_{p^2} [x]/ (px, f(x))$ where $f(x) \in\{x^2, x^2 - p, x^2-kp  \}$ and $k$ is a nonsquare modulo $p$ (only if $p>2$).  In all cases, $R$ is not coverable since $\sigma(R) = \sigma(R/pR) = \sigma(F_p[x]/(x^2))=\infty$; see the proof of Theorem \ref{Theorem 1} for the first equality. 
If the additive group of $R$ is $C_p^3$ then $R$ is one of three rings. The ring $R=F_{p^3}$ is not coverable since it is generated by any element not in $F_p$. Next,  $R=F_p[x]/(x^3)$ is not coverable since it is generated by $a=1+x$: we have $a^{p^2} = 1$ and $a^{p^2} - a = x$. Last, $R=F_p[x, y]/(x^2, xy, y^2)$ is coverable with $\sigma =p+1$, by example 6.1. in \cite{Wer15}.
 \end{proof}

\begin{corollary} \label{2 choices for 27}
If $|R|=27$ and $R\in S(4)$ then $R$ is isomorphic to the ring in Example \ref{examplecomm} or \ref{examplenoncomm}. 
\end{corollary}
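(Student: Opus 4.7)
The plan is to deduce the corollary as an almost immediate consequence of Lemma \ref{$p^3$ coverables}. Since any $R\in S(4)$ must in particular be coverable, I would invoke that lemma with $p=3$ to enumerate the possibilities: the only coverable unital rings of order $27$ listed there are $T_2(F_3)$, $F_3[x,y]/(x^2,xy,y^2)$, and $F_3\times F_3\times F_3$. Thus $R$ must be isomorphic to one of these three rings, and it suffices to check which of them actually lie in $S(4)$.

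The third ring is ruled out immediately because the same lemma records $\sigma(F_3\times F_3\times F_3)=6\neq 4$, so it cannot belong to $S(4)$. The first two rings both have covering number $p+1=4$ by Lemma \ref{$p^3$ coverables}, and membership in $S(4)$ for each has already been verified in Examples \ref{examplenoncomm} and \ref{examplecomm} respectively: for $T_2(F_3)$ the unique maximal proper quotient is $F_3\times F_3$, which is not coverable, while every proper quotient of $F_3[x,y]/(x^2,xy,y^2)$ has order at most $9$ and is not isomorphic to $F_2\times F_2$, so by Lemma \ref{$p^2$ coverables} none of them are coverable either. Hence both rings lie in $S(4)$ and are precisely the rings named in the conclusion.

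There is essentially no obstacle here — the substantive content of the corollary was absorbed into the preceding classification lemmas, and what remains is a three-way bookkeeping check. The only point worth guarding against is the possibility that a coverable ring with the correct $\sigma$ might nonetheless fail the minimality condition in the definition of $S(4)$ because of a subtle quotient, but for order $27$ every proper quotient has order at most $9$, and Lemma \ref{$p^2$ coverables} renders this check trivial.
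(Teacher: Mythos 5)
Your argument is correct and matches the paper's (implicit) reasoning: the corollary is stated without proof because it follows immediately from the preceding lemma on coverable unital rings of order $p^3$, which for $p=3$ identifies $T_2(F_3)$ and $F_3[x,y]/(x^2,xy,y^2)$ as the only coverable unital rings of order $27$ with covering number $4$. Your additional verification that these two rings do belong to $S(4)$ is not needed for the stated implication (and was already carried out in the examples), though it is correct.
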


For future reference, we observe that if $R$ surjects onto $T_2(F_p)$ or onto $F_p[x, y]/(x^2, xy, y^2)$, and if $|R|$ is a power of $p$, then $\sigma(R)= p+1$. To see this, first note that clearly $\sigma(R)\leq p+1$. By Theorem \ref{Theorem 1}, there exists a quotient $T$ of $R$, with $T\in S(\sigma(R))$; since $T$ has order a power of $p$, Theorem \ref{Theorem 1} shows $p<\sigma(T) = \sigma(R)$. Thus $\sigma(R)=p+1$.

\section{Covering numbers of finite local rings with residue field $F_p$}

If $R$ is a finite local ring with radical $J$, then the characteristic of $R$ is $p^k$ for some prime $p$, and $J\supset pR$. So the $R$-module $J/ J^2$ is an $F_p$-vector space. 

\begin{prop} \label{resfield p prop}  
Let $R$ be a finite commutative local ring with radical $J$ and residue field $F_p$. Let  $n= \dim_{F_p} J/ J^2$. If $n\leq 1$, or if $n=2$ and $p\not\in J^2$, then $R$ is not coverable. Otherwise $R$ is coverable, surjects onto $F_p[x, y]/(x^2, xy, y^2)$, and $\sigma(R)= p+1$. In particular, if $R\in S(d)$ then $d=p+1$ and $R\cong F_p[x, y]/(x^2, xy, y^2)$. \label{ResFieldF2Corollary}

\end{prop}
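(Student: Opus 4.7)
The plan is to reduce everything to the quotient $\bar R := R/(pR+J^2)$, which is a commutative local $F_p$-algebra whose maximal ideal $\bar J$ satisfies $\bar J^2 = 0$. Setting $n' := \dim_{F_p} \bar J = \dim_{F_p} J/(J^2+pR)$, a short computation yields $n' = n$ when $p \in J^2$ and $n' = n - 1$ when $p \notin J^2$ (in the latter case $p$ can be taken as the first vector of a basis of $J/J^2$); so the hypothesis ``$n \leq 1$, or $n=2$ and $p \notin J^2$'' is exactly $n' \leq 1$, while its complement is $n' \geq 2$.

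When $n' \geq 2$, a commutative local $F_p$-algebra whose radical squares to zero and has $F_p$-dimension $n'$ must be isomorphic to $F_p[x_1,\ldots,x_{n'}]/(x_i x_j)_{i,j}$; sending $x_3,\ldots,x_{n'}$ to zero then produces a surjection $\bar R \twoheadrightarrow F_p[x,y]/(x^2,xy,y^2)$, which composed with $R \twoheadrightarrow \bar R$ supplies the promised surjection. The observation recorded just before this section (that any ring of $p$-power order surjecting onto $F_p[x,y]/(x^2,xy,y^2)$ has covering number exactly $p+1$) immediately gives $\sigma(R) = p+1$. The final assertion then follows: if $R \in S(d)$ then $R$ is coverable, so $d = p+1$, and minimality of $R$ in $S(d)$ forces the surjection onto $F_p[x,y]/(x^2,xy,y^2)$ to be an isomorphism.

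When $n' \leq 1$, I will show that $R$ is generated as a (possibly non-unital) subring by a single element, and hence uncoverable. Pick $\pi \in J$ whose image generates $J/(J^2+pR)$ (with $\pi = 0$ allowed when $n' = 0$). Nakayama applied inside $R/pR$ gives $J = \pi R + pR$, so $R = \mathbb{Z}\cdot 1 + J \subset \mathbb{Z}[\pi] + J^2$; iterating this inclusion and using that $J$ is nilpotent in the Artinian ring $R$ eventually yields $R = \mathbb{Z}[\pi]$. Then $a := 1+\pi$ is a unit in $R$, so $a^m = 1$ for some $m \geq 1$, placing $1$ in the subring $\langle a \rangle$ generated by $a$, whence $\pi = a - 1 \in \langle a \rangle$ as well, giving $\langle a \rangle \supset \mathbb{Z}[\pi] = R$. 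The main obstacle is the iterative bootstrap $R = \mathbb{Z}[\pi] + J^m \Rightarrow R = \mathbb{Z}[\pi] + J^{m+1}$, which requires carefully rewriting elements of $J^m$ via products of the two generators $p, \pi$ of $J$ against representatives drawn from $\mathbb{Z}[\pi] + J^{m-1}$; everything else is a routine combination of Nakayama's lemma and the structure of square-zero extensions of $F_p$.
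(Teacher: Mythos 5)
Your proof is correct, and in the coverable case it takes a genuinely different route from the paper. The paper works with $R/J^2$, which may have characteristic $p$ or $p^2$, and classifies it by analyzing the possible kernels of a presentation $\Z_{p^2}[x_1,\dots,x_n]/I_n \twoheadrightarrow R/J^2$, producing three candidate rings and checking each surjects onto $F_p[x,y]/(x^2,xy,y^2)$ in the relevant cases. You instead pass to $\bar R = R/(pR+J^2)$, which is automatically a square-zero commutative local $F_p$-algebra and hence \emph{uniquely} isomorphic to $F_p[x_1,\dots,x_{n'}]/(x_ix_j)$; the price is the small dictionary $n' = n$ or $n-1$ according as $p \in J^2$ or not, but the payoff is that you avoid the kernel analysis over $\Z_{p^2}$ entirely and the desired surjection is immediate once $n' \geq 2$. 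The translation of the hypothesis into $n' \leq 1$ is exactly right (one checks $n=0$ forces $p \in J^2$, so no edge cases arise). In the non-coverable case your argument is essentially the paper's: both exhibit a single generator of the form $1 + (\text{nilpotent})$, with the paper choosing $r$ from a basis of $J/J^2$ and you choosing $\pi$ from a basis of $J/(J^2+pR)$, and both establish $R = \Z[\pi]$ (resp.\ $R = \Z\cdot 1 + (r)$) by an iterated inclusion. Your bootstrap $J^m \subset \Z[\pi] + J^{m+1}$ does go through cleanly once one writes $J^m = \sum_{i+j=m}\pi^i p^j R$ and substitutes $R = \Z\cdot 1 + J$ in each summand, so the step you flagged as the "main obstacle" is indeed routine. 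The final deduction for $R \in S(d)$ matches the paper's logic.
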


\begin{proof}
 If $n=0$, then $J=J^2$ so $J=0$. A finite local commutative ring with trivial radical is a field, and fields are not coverable. 

Suppose $n\leq 2$, with $p\not \in J^2$ if $n=2$. Since $R/J = F_p$, we have $R = \Z \cdot 1_R + J$. By Nakayama's Lemma, we can lift an $F_p$-basis for $J/J^2$ to a minimal generating set for $J$. For $r\in J$, let $[r]$ denote the image of $r$ inside $J/J^2$. So we can choose $r$ so that either $\{[r]\}$ or $\{[p], [r] \}$ is a basis for $J/J^2$, depending on $n$. In either case, $J = (r, p)$ and $$R = \Z\cdot 1_R + (r).$$ Since $R$ is finite, $J$ is nilpotent and $r^m = 0$ for some $m\geq 2$. Let $a = 1_R+r$ and $t=|R^\times|$. Since $r$ is nilpotent, $a\in R^\times$, $a^t =1_R$, $a- a^t = r$, and $a$ generates $R$. 



Now assume $n\geq 2$. Let $a_1, \ldots, a_n$ be an $F_p$-basis for $J/J^2$. By Nakayama's Lemma, and since $J/J^2$ is the radical of $R/J^2$, these can be chosen so that they are also a minimal generating set for $J/J^2$ as an $R/J^2$-module. In $R/J^2$ we have the relations $a_ia_j =0$ and $pa_i = 0$ for all $1\leq i, j\leq n$. Since the characteristic of $R/J^2$ is either $p$ or $p^2$, we have shown that there is a surjective ring homomorphism 
$$\Z_{p^2}[x_1, \ldots, x_n]/I_n\to R/J^2$$ given by $x_i\mapsto a_i$, where $I_n$ is the ideal generated by the elements $px_i$ and $x_i x_j$, $1\leq i, j\leq n$. An element in the kernel, being a non-unit, is of the form $pk + \sum\limits_{i=1}^n c_i x_i$ where $0\leq k\leq p-1$ and $0\leq c_i\leq p-1$. If more than one of the $c_ix_i $ were nonzero, or if $k=0$, then we would obtain in $R/J^2$ a linear dependence among the $a_i$, which is absurd. Scaling by units, this leaves elements of the form $p+cx_i$ for $0\leq c\leq p-1$ and $1\leq i\leq n$. If two such elements (for different $c$ or different $i$) were in the kernel, we would again create a linear dependence among the $a_i$. So the kernel is a principal ideal, generated by $0$, $p$, or $p+cx_i$ for some $1\leq c\leq p-1$ and $1\leq i\leq n$. Thus $R/J^2$ is isomorphic to one of $$\Z_{p^2}[x_1, \ldots, x_n]/I_n$$ $$\Z_p[x_1, \ldots, x_n]/I_n$$ $$\Z_{p^2}[x_1, \ldots, x_{n-1}]/I_{n-1}.$$ If $n\geq 3$, all three of these surject onto $F_p[x, y]/(x^2, y^2, xy)$, so $R/J^2$ does. If $n=2$ and $p\in J^2$, then $R/J^2$ has characteristic $p$ and the third case does not arise, so we can argue as before. The surjection $R\to F_p[x,y]/(x^2, xy, y^2)$ together with the fact that $|R|$ is a power of $p$ imply the other claims immediately. 
\end{proof}

The following corollary, which will not be used in this paper, addresses a special case of a question of Werner; see \cite{Wer15}. 

\begin{corollary}
Let $R$ be a finite commutative ring, and let $R_1, \ldots, R_t$ be the local factors of $R$. Assume all $R_i$ have residue field $F_p$. 

a) If any $R_i$ is coverable, then $R$ surjects onto $F_p[x, y]/(x^2, xy, y^2)$ and $\sigma(R)=p+1$. 

b) Suppose all $R_i$ are not coverable. Then $R$ is coverable if and only if $t\geq p$, in which case $p+1\leq \sigma(R)\leq p + \binom{p}{2}$
\end{corollary}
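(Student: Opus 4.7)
For part (a), the plan is to compose: if $R_i$ is coverable, Proposition \ref{resfield p prop} gives a surjection $R_i\twoheadrightarrow F_p[x,y]/(x^2,xy,y^2)$, and composing with the projection $R\twoheadrightarrow R_i$ yields the claimed surjection. Since $|R|=\prod_i|R_i|$ is a power of $p$, the remark after Corollary \ref{2 choices for 27} forces $\sigma(R)=p+1$.

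For the direction $t\geq p$ of part (b) I would project $R\twoheadrightarrow R/J\cong F_p^t\twoheadrightarrow F_p^p$ and cover $F_p^p$ explicitly by the $p$ subrings $M_i=\{x:x_i=0\}$ together with the $\binom{p}{2}$ subrings $N_{ij}=\{x:x_i=x_j\}$. Any tuple with no zero entry lies in $(F_p^\times)^p$, and since $|F_p^\times|=p-1<p$ pigeonhole forces two coordinates to coincide; so this is a cover of size $p+\binom{p}{2}$ and pulls back to give $\sigma(R)\leq p+\binom{p}{2}$. The matching lower bound $\sigma(R)\geq p+1$ is the non-cyclic $p$-group argument used in the proof of Theorem \ref{Theorem 1}, applied to the additive quotient $R/S$ where $S$ is the intersection of any minimum cover.

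The delicate direction of part (b) is to show that when $t<p$ the ring $R$ is singly generated, which (since $R$ is finite) is equivalent to being non-coverable. By the proof of Proposition \ref{resfield p prop}, each non-coverable factor can be written as $R_i=\Z[r_i]$ (with $r_i=0$ when $R_i=F_p$), and the same argument shows that $a_i:=c_i+r_i$ generates $R_i$ for any integer $c_i$ with $c_i\not\equiv 0\pmod p$. Choose $c_1,\ldots,c_t$ representing distinct classes in $F_p^\times$ (possible as $t\leq p-1$), put $a=(a_1,\ldots,a_t)$, and note that $a$ is a unit of $R$ so $a^{|R^\times|}=1\in\langle a\rangle$. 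The image of $a$ in $R/J\cong F_p^t$ has distinct coordinates $\bar c_1,\ldots,\bar c_t\in F_p^\times$, so Lagrange interpolation at these $t\leq p-1$ distinct points yields a polynomial $P_i\in F_p[x]$ sending $\bar c_i$ to $1$ and every other $\bar c_j$ to $0$. Lifting $P_i$ to $\Z[x]$ and evaluating at $a$ gives $P_i(a)=e_i+n_i$ with $e_i\in R$ the $i$-th primitive idempotent and $n_i\in J$ nilpotent; raising to a sufficiently large power divisible by $|R_i^\times|$ absorbs $n_i$ and places $e_i\in\langle a\rangle$. Since $R_i=\Z[a_i]$, for each $b\in R_i$ there is $f\in\Z[x]$ with $f(a_i)=b$, so $e_if(a)=(0,\ldots,b,\ldots,0)\in\langle a\rangle$; summing over $i$ gives $\langle a\rangle=R$. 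The main technical obstacle is this idempotent-extraction step, which requires producing a Lagrange interpolant modulo $J$ and then cleaning up the nilpotent error by powering.
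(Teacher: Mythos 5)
Your proof is correct and arrives at all the claimed conclusions, but it takes a somewhat different (and arguably more conceptual) route through the delicate direction of part~(b). A few comparisons are worth making.

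For $t\geq p$, the paper simply cites Werner for the covering number of $F_p^p$, whereas you exhibit the explicit cover by the $p$ "coordinate-vanishing" subrings $M_i$ and the $\binom{p}{2}$ "diagonal" subrings $N_{ij}$, with the pigeonhole observation that $|F_p^\times|=p-1<p$ forces a repeated coordinate; that is a clean self-contained argument. For $t<p$, the paper does not work with $R$ directly: it lifts $R$ to a universal ring $T=\prod_{i=1}^{p-1}\Z_{p^{k_i}}[t_i]/(t_i^{m_i})$ and shows $T$ is generated by $x=(1+t_1,\,2+t_2,\dots,(p-1)+t_{p-1})$ by an ad hoc sequence of subtractions and powerings: from $x^f=(1,\dots,1)$ one forms $x-x^f$, raises again to the power $f$ to isolate $(0,1,\dots,1)$, subtracts to get the idempotent $(1,0,\dots,0)$, and iterates. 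You instead stay in $R$, choose a generator $a=(c_1+r_1,\dots,c_t+r_t)$ with distinct residues $c_i\in F_p^\times$, and extract the primitive idempotents by Lagrange interpolation modulo $J$ followed by powering by a suitable multiple of $|R^\times|$ to kill the nilpotent error. This is in effect the systematic version of the paper's hand computation: Lagrange interpolation is exactly what the iterated subtract-and-power trick is simulating, and your version explains \emph{why} distinct units are needed (so the interpolation nodes are distinct, which is where $t\leq p-1$ enters). One small thing you should make explicit: the power $N$ you raise to must be both divisible by $|R_i^\times|$ \emph{and} at least the nilpotency index of $J$, so that both the unit coordinate becomes $1$ and the nilpotent coordinates vanish; a large multiple of $|R^\times|$ does the job. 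With that detail filled in, your argument is a valid alternative to the paper's proof.
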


\begin{proof}
a) This is immediate from the previous proposition. 

b) The inequality $p+1\leq \sigma(R)$ is automatic if $\sigma(R)=\infty$ and follows from Theorem \ref{Theorem 1} if $R$ is coverable. If $t\geq p$ then $R$ surjects onto $F_p^p$, which is coverable with covering number $p+\binom{p}{2}$ by \cite{Wer15}. Thus $R$ is coverable and $\sigma(R)\leq p+ \binom{p}{2}$. 

So assume $t<p$. From the proof of Proposition \ref{resfield p prop}, each $R_i$ is a quotient of $\Z_{p^{k_i}}[t_i]/(t_i^{m_i})$ for some $k_i, m_i\geq 1$. If the ring $T:=\prod\limits_{i=1}^{p-1} \Z_{p^{k_i}}[t_i]/(t_i^{m_i})$ is not coverable then neither is its quotient $R$. Let $M$ be the subring generated by  $$x = (1, 2, \ldots, p-1) + (t_1, t_2, \ldots, t_{p-1}).$$ We claim $M=T$. Let $f= |T^\times|$. 
Since each $i + t_i\in R_i^\times$, we have $x^f = (1, 1, \ldots, 1)$, so $x - x^f = (0, 1, 2, \ldots, p-2) + (t_1, t_2, \ldots, t_{p-1})$, and $(x-x^f)^f = (0, 1, 1, \ldots, 1)$. Subtracting this from $x^f$ gives $(1, 0, 0\ldots, 0)\in M$. This shows that $M\ni (1,0,\ldots, 0)(x-x^f) = (t_1, 0, \ldots, 0)$, and so $M\supset \Z_{p^{k_1}}[t_1]/(t_1^{m_1})\times 0\times \cdots \times 0$. Iterating this argument yields $M=T$, so $T$ is not coverable.
\end{proof}

\begin{prop} \label{noncomm resfield $p$ prop}
Let $R$ be a finite noncommutative local ring with residue field $F_p$. 
Then $R$ surjects onto $F_p[x, y]/(x^2, xy, y^2)$ and $\sigma(R)=p+1$. In particular, $R\not\in S(p+1)$. 
\end{prop}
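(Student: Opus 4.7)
The plan is to reduce to the commutative Proposition \ref{resfield p prop} by analyzing $R/J^2$, where $J$ denotes the Jacobson radical of $R$, after first excluding the two cases in that proposition where the commutative analogue fails to be coverable.

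First, I would observe that $R/J^2$ is automatically commutative. Writing $R = \Z \cdot 1 + J$ (valid because the residue field is $F_p$), any $u = m + x$ and $v = n + y$ with $m,n \in \Z$ and $x,y \in J$ satisfy $[u,v] = xy - yx \in J^2$. This also shows $J^2 \neq 0$: otherwise $R = R/J^2$ would be commutative, contradicting the hypothesis. Hence we are in a regime where $R/J^2$ is a proper quotient of $R$ to which Proposition \ref{resfield p prop} applies.

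Next, I would rule out each of the two degenerate cases of Proposition \ref{resfield p prop} by structural arguments that force $R$ itself to be commutative, contradicting the assumption. If $\dim_{F_p} J/J^2 \leq 1$, one lifts a basis element of $J/J^2$ (if any) to some $r \in J$, applies Nakayama's Lemma to the left module $J$ to get $J = Rr$, and then uses the nilpotent filtration $R \supset J \supset J^2 \supset \cdots$ to prove inductively that $R = \Z[r] + J^k$ for every $k$; since $J^k = 0$ eventually, this yields $R = \Z[r]$, which is commutative. If $\dim_{F_p} J/J^2 = 2$ and $p \notin J^2$, then $[p]$ is nonzero in $J/J^2$, so one can choose a basis of the form $\{[p],[a]\}$ with $a \in J$, and Nakayama gives $J = Rp + Ra$. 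Crucially, $p$ is central in any associative ring, so the same filtration induction $R = \Z[a] + J^k$ goes through and collapses $R$ to the commutative subring $\Z[a]$, again contradicting noncommutativity.

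With both degenerate cases excluded, we are left with either $\dim_{F_p} J/J^2 = 2$ and $p \in J^2$, or $\dim_{F_p} J/J^2 \geq 3$. These are precisely the situations in which Proposition \ref{resfield p prop}, applied to the commutative ring $R/J^2$ (whose radical $J/J^2$ squares to zero), supplies a surjection onto $F_p[x,y]/(x^2, xy, y^2)$. Composing with $R \twoheadrightarrow R/J^2$ produces the required surjection, whence $\sigma(R) \leq p+1$ by pulling back covers. The matching lower bound $\sigma(R) \geq p+1$ was recorded immediately after Corollary \ref{2 choices for 27}, using Theorem \ref{Theorem 1} together with the fact that $|R|$ is a power of $p$. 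Finally, since $R$ is noncommutative while $F_p[x,y]/(x^2,xy,y^2)$ is commutative, this surjection has nontrivial kernel, giving a proper quotient of $R$ with the same covering number $p+1$, so $R \notin S(p+1)$ by definition. The main obstacle is the exclusion of the case $\dim_{F_p} J/J^2 = 2$ with $p \notin J^2$: the decisive input is centrality of $p$, which turns the noncommutative presentation $\Z \cdot 1 + Rp + Ra$ into the commutative subring generated by the single element $a$.
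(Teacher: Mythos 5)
Your proof is correct and takes essentially the same approach as the paper: pass to the commutative quotient $R/J^2$, rule out the two non-coverable cases of Proposition \ref{resfield p prop} by showing they would force $R$ itself to be commutative, and then compose the surjections. The one real difference is that where the paper simply cites Lemmas 2.1 and 2.2 of \cite{Cor I} for these two exclusions, you supply direct Nakayama-plus-filtration arguments, which makes the proof more self-contained. One caution on the $n=2$, $p\notin J^2$ case: ``the same filtration induction'' is not literally the same, since $J^k$ is no longer $Ra^k$ --- a generic element of $J^k$ is a sum of products with $p$'s and $a$'s interleaved. The argument still works (write each $j_i\in J$ as $r_ip + r_i'a$, pull all $p$'s to the front by centrality, then expand each remaining $r\in R$ as an integer plus an element of $J$; the pure-integer term lands in $\Z[a]$ and every other term picks up an extra factor from $J$ and lands in $J^{k+1}$), but you should spell out this modified induction rather than asserting it carries over verbatim from the cyclic case.
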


\begin{proof}
Let $J$ be the radical of $R$. Since $xy-yx\in J^2$ for $x, y\in J$, and since $R = \Z\cdot 1_R + J$, we see that $R/J^2$ is commutative. Using the criteria of Proposition \ref{resfield p prop}, we will show the commutative local ring $R/J^2$ is coverable, and thus obtain a surjection $R\to R/J^2\to F_p[x, y]/(x^2, xy, y^2)$ which implies the rest. By Lemma 2.2 of \cite{Cor I}, $\dim_{F_p} J/J^2>1$. Suppose $\dim_{F_p}J/J^2 = 2$ and $p\not\in J^2$. Then there exists $x\in J$ such that $x$ and $p$ provide an $F_p$-basis for $J/J^2$. By Lemma 2.1 of \cite{Cor I}, $J = (p, x)$. But then $R = \Z\cdot 1_R + J = \Z \cdot 1_R + (x)$, so $R$ is commutative. 
\end{proof}



\section{Rings in $S(4)$ of order $3^t$}

Let $(R, S_1, S_2, S_3, S_4)$ be a good tuple, with $|R|$ a power of 3. Recall $S = \bigcap\limits_{i=1}^4 S_i$. Since $[R:S]= [R:S_i][S_i:S]\leq 4! =24$ by Lemma \ref{key inequality}, and $[R:S]$ is a power of 3, 
we have $[R:S_i]= [S_i:S]=3$ for every $1\leq i\leq 4$. In particular, $3R\subset S$, so $3R=0$. Note that $S_i\cap S_j = S$ whenever $i\neq j$, since $S_i\cap S_j$ is a proper subring of $S_i$, but $S$ is a maximal subring of $S_i$ for each $i$. Suppose there are 2 indices $i\neq j$ with $1_R\not\in S_i \cup S_j $. Then $S_i$ and $S_j$ are both maximal ideals of $R$ by Lemma 2.1. of \cite{Wer18}, and $S = S_i \cap S_j$ is an ideal, hence is zero. But $S=0$ implies $|R|=9$, contradicting Lemma \ref{$p^2$ coverables}. Thus $1_R\in S$. Let $S_1/S = \{  0, x_1, 2x_1\}$ and $S_2/S = \{0, x_2, 2x_2  \}$. We index so that $S_3/S = \{0, x_1+x_2, 2x_1+2x_2  \}$. Since $x_1+2x_2\not\in \bigcup\limits_{i=1}^3 S_i$, 
we have $S_4/S = \{ 0, x_1+ 2x_2, 2x_1 + x_2  \}$. For $r\in R\setminus S$, let $I_R(r) := \{s\in S: sr\in S  \}$ and $I_L(r) := \{s\in S: rs\in S  \}$. These are additive subgroups of $S$.


\begin{lemma}
The sets $I_R(r)$ and $I_L(r)$ are independent of $r$. 
\end{lemma}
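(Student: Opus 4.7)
The plan is to identify $R/S$ as a $2$-dimensional $\mathbb{F}_3$-vector space on which $S$ acts by \emph{scalars}, so that the kernel of the action does not depend on which nonzero element one tests it against.

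First I would observe that since $S$ is a subring of $R$ containing $1_R$, for any $s\in S$ we have $sS\subset S$ and $Ss\subset S$. Hence left multiplication by $s$ descends to a well-defined $\mathbb{F}_3$-linear endomorphism $L_s\colon R/S\to R/S$, and likewise right multiplication gives $R_s\colon R/S\to R/S$. The abelian group $R/S$ has order $9$ and is annihilated by $3$, so it is a $2$-dimensional $\mathbb{F}_3$-vector space. The four cosets $S_i/S$ from the preceding paragraph are four distinct $1$-dimensional subspaces, and a $2$-dimensional $\mathbb{F}_3$-vector space has exactly $(9-1)/(3-1)=4$ such lines, so the $S_i/S$ account for \emph{all} the lines of $R/S$. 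Because each $S_i$ is a subring containing $S$, we have $sS_i\subset S_i$, so each $L_s$ (and each $R_s$) preserves every line of $R/S$.

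Next I would invoke the elementary linear-algebra fact that a linear endomorphism of a $2$-dimensional vector space that preserves every $1$-dimensional subspace is a scalar: if $Lv=\lambda v$ and $Lw=\mu w$ for independent $v,w$ and $L(v+w)=\nu(v+w)$, then $\lambda v+\mu w=\nu v+\nu w$ forces $\lambda=\mu=\nu$. Applying this to $L_s$ yields a function $\phi_L\colon S\to\mathbb{F}_3$ with $L_s=\phi_L(s)\cdot\mathrm{id}_{R/S}$; from the formula $L_{s+t}=L_s+L_t$ and $L_{st}=L_sL_t$ one checks that $\phi_L$ is a ring homomorphism. The analogous right-side scalar gives a ring homomorphism $\phi_R\colon S\to\mathbb{F}_3$.

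Finally, for any $r\in R\setminus S$ the class $r+S$ is nonzero in $R/S$, so
\[
sr\in S \iff \phi_L(s)(r+S)=0 \iff \phi_L(s)=0.
\]
Thus $I_R(r)=\ker\phi_L$, and symmetrically $I_L(r)=\ker\phi_R$, neither of which depends on $r$. The only step that requires any care is step~2---checking that the $S_i/S$ really are the four distinct lines of $R/S$, and that each $L_s$ preserves them---but this is essentially built into the setup of the good tuple, so I do not anticipate a genuine obstacle.
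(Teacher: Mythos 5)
Your proof is correct. The underlying mechanism is the same as the paper's — both proofs exploit the fact that each $S_i$ is a subring containing $S$, so left and right multiplication by $s\in S$ descend to $R/S$ and preserve every coset group $S_i/S$ — but you have repackaged the paper's explicit coset-by-coset case checking into the clean linear-algebra statement that an endomorphism of a two-dimensional vector space preserving three distinct lines (in fact only three are needed) must be scalar, so the kernel of the action is all of $S$ or is a fixed hyperplane independent of the test vector. This is a genuinely more conceptual presentation: it isolates the reason the computation works, gives the homomorphisms $\phi_L,\phi_R\colon S\to\mathbb{F}_3$ for free (which would also streamline the subsequent lemma that $S/I_R(r)$ has order $3$, since that is exactly the statement that $\phi_L$ is nonzero, which follows from $\phi_L(1_R)=1$), and avoids any risk of a missed case. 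The paper's proof has the modest advantage of being self-contained arithmetic in the coset representatives, but your version is shorter once the linear-algebra lemma is granted, and the verification that $L_s$ (resp.\ $R_s$) is well-defined and preserves each $S_i/S$ is exactly as you say: $sS\subset S$ since $S$ is a subring, and $sS_i\subset S_i$ since $s\in S\subset S_i$ and $S_i$ is a subring.
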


\begin{proof}

Let $r= ax_1 + bx_2$ be one of our eight nontrivial coset representatives. We will show that the sets $I_R(r)$ are all equal. Clearly $I_R(r) = I_R(2r)$. Suppose $s\in I_R(x_1)$. If $s\not\in I_R(x_2)$ then $sx_2 = kx_2 + s_2$ for some $s_2 \in S$, $1\leq k \leq 2$. But then $s(x_1 + x_2) = sx_1 + s_2 + kx_2$ is a nontrivial coset representative of $S_2/S$, hence cannot be in $S_3$, which contradicts $S_3$ being a ring. Similarly, if $s\not\in I_R(x_1 + x_2)$, then $s(x_1 + x_2) =s_3 + kx_1 + kx_2 $ for some $s_3 \in S$, $1\leq k\leq 2$. But then $s(2x_1+x_2) = sx_1  + s_3 + kx_1 + kx_2$ lies in $S_3$ instead of $S_4$. Finally, if $s\not\in I_R(2x_1 + x_2)$, so that $s(2x_1 + x_2) = s_4 + 2kx_1 + kx_2$, for some $s_4\in S$ and $1\leq k\leq 2$, then $s x_2 = sx_1 + s_4 + 2kx_1 + kx_2$, which lies in $S_4$ instead of $S_2$. Now apply symmetry. 
\end{proof}

\begin{lemma}
The additive groups $S/I_R(r)$ and $S/I_L(r)$ have order 3. 
\end{lemma}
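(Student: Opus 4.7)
The plan is to embed $S/I_R(r)$ into a group of order $3$ and then eliminate the possibility that it is trivial. First I would use the fact that every $r\in R\setminus S$ lies in a unique $S_i$: since $S\subset S_i$ and $S_i$ is closed under multiplication, $sr\in S_i$ for every $s\in S$. Consequently, the map $\phi_r\colon S\to S_i/S$ given by $s\mapsto sr+S$ is a well-defined homomorphism of additive groups whose kernel is exactly $I_R(r)$. Because $[S_i:S]=3$, this forces $|S/I_R(r)|$ to divide $3$, and so it equals either $1$ or $3$.

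To rule out $|S/I_R(r)|=1$, I would argue by contradiction. If $I_R(r)=S$, then the previous lemma gives $I_R(r')=S$ for every $r'\in R\setminus S$, so $Sr'\subset S$ for every such $r'$. Combined with $Ss\subset S$ for $s\in S$, this yields $SR\subset S$. Since $1_R\in S$ was established earlier in the excerpt, I would conclude $R=1_R\cdot R\subset SR\subset S$, contradicting $S\subsetneq R$. Hence $|S/I_R(r)|=3$, and the statement for $I_L(r)$ follows by the identical argument with left multiplication in place of right.

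Since the argument rests on a single observation---that $r\in S_i$ forces $Sr\subset S_i$, confining the image of $\phi_r$ to a group of order $3$---there is no real obstacle; the only thing to verify beyond this is the degenerate case, which collapses immediately once we invoke $1_R\in S$.
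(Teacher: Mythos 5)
Your proof is correct, and it cleanly packages the same core observation the paper uses, namely that if $r\in S_i\setminus S$ then multiplication by $s\in S$ keeps you inside $S_i$, so the coset of $sr$ modulo $S$ lies in $S_i/S$, a group of order $3$. You express this as the additive homomorphism $\phi_r\colon S\to S_i/S$ with kernel exactly $I_R(r)$, which immediately gives $[S:I_R(r)]\mid 3$. The paper instead writes $sr=s'+kr$ with $k\in\{0,1,2\}$ and argues by hand that whenever $s_1,s_2,s_1-s_2$ all lie outside $I_R(r)$ one forces $s_1+s_2\in I_R(r)$, which is a less transparent way of bounding $|S/I_R(r)|$ by $3$; your homomorphism formulation is the cleaner version of the same idea.

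One small inefficiency: to rule out $[S:I_R(r)]=1$ you invoke the previous lemma (the independence of $I_R(r)$ from $r$) and derive a contradiction from $SR\subset S$. The paper's route is shorter and does not depend on that lemma: since $1_R\in S$ and $1_R\cdot r=r\notin S$, we have $1_R\in S\setminus I_R(r)$ directly, so $[S:I_R(r)]>1$. Your argument is still valid — and you correctly flag that it relies on $1_R\in S$ having been established — but the one-line version is worth keeping in mind.
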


\begin{proof}
We can assume that $r$ is one of our eight nontrivial coset representatives. Since $1_R\in S\setminus I_R(r)$, we have $[S:I_R(r)]>1$. Suppose that $s_1, s_2, s_1 - s_2 \in S\setminus I_R(r)$. 
We claim $s_1 + s_2 \in I_R(r)$. Write $s_1r = s'_1 + k_1r$, $s_2r = s'_2 + k_2r$ with $1\leq k_i\leq 2$. Then $(s_1 - s_2)r = s'_1 - s'_2  + (k_1-k_2)r$ must have $k_1 - k_2\in \{1, 2\}$. But then $(s_1+s_2)r = s'_1 + s'_2 + (k_1 + k_2)r\in S$ since $k_1 + k_2 =0$.
\end{proof}
 

\begin{lemma}
We have $I_R(r)\cap I_L(r)=0$. 
\end{lemma}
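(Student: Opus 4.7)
The plan is to show that $K := I_R \cap I_L$ is a two-sided ideal of $R$; since $K \subset S$ contains no nonzero two-sided ideal of $R$ by the good-tuple property, this forces $K = 0$.

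First I would verify that $I_R$ and $I_L$ are each two-sided ideals of $S$: for $\sigma \in S$, $\iota \in I_R$, and any $r \in R$, associativity gives $(\sigma\iota)r = \sigma(\iota r) \in \sigma S \subset S$ and $(\iota\sigma)r = \iota(\sigma r) \in \iota R \subset S$, so $\sigma\iota, \iota\sigma \in I_R$; the same argument handles $I_L$. Hence $K$ is an ideal of $S$. Moreover, $I_R$ is a right ideal of $R$ and $I_L$ is a left ideal of $R$: if $\iota \in I_R$ and $r \in R$, then $(\iota r)R = \iota(rR) \subset \iota R \subset S$, so $\iota r \in I_R$, and symmetrically for $I_L$.

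The main step, which I expect to be the crucial one, is to prove $x_1 s, x_2 s, s x_1, s x_2 \in K$ for every $s \in K$. The key observation is that since $[S:I_R] = [S:I_L] = 3$ and $1_R \in S$ lies in neither $I_R$ nor $I_L$, one has direct-sum decompositions $S = F_3 \cdot 1 \oplus I_R = F_3 \cdot 1 \oplus I_L$. Writing $\bar\sigma \in F_3$ (resp.\ $\bar\sigma' \in F_3$) for the scalar component of $\sigma \in S$ in the first (resp.\ second) decomposition, one obtains $\sigma r \equiv \bar\sigma \cdot r \pmod S$ and $r\sigma \equiv \bar\sigma' \cdot r \pmod S$ for every $r \in R$. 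I would then compute $x_1 s x_2$ two ways: $(x_1 s)x_2 \equiv \overline{x_1 s} \cdot x_2 \pmod S$, using that $x_1 s \in S$ (from $s \in I_L$), and $x_1(sx_2) \equiv \overline{sx_2}' \cdot x_1 \pmod S$, using that $sx_2 \in S$ (from $s \in I_R$). Associativity forces
\[
\overline{x_1 s} \cdot x_2 \equiv \overline{sx_2}' \cdot x_1 \pmod S,
\]
and since $\{x_1, x_2\}$ is an $F_3$-basis of $R/S$, both scalars must vanish separately. This gives $x_1 s \in I_R$ and $sx_2 \in I_L$, and combined with the automatic inclusions $x_1 s \in I_L$ and $sx_2 \in I_R$ from the ideal properties above, we conclude $x_1 s, sx_2 \in K$. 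Running the same argument on $x_2 s x_1$ gives $x_2 s, sx_1 \in K$.

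Finally, the additive decomposition $R = S \oplus F_3 x_1 \oplus F_3 x_2$ together with $K$ being an $S$-ideal yields $Rs \cup sR \subset K$ for every $s \in K$, so $K$ is a two-sided ideal of $R$ contained in $S$, and therefore $K = 0$. The main technical hurdle is the central associativity identity: its power depends essentially on $R/S$ having dimension exactly $2$ over $F_3$ with the basis $\{x_1, x_2\}$, which is what allows us to equate a scalar multiple of $x_2$ with a scalar multiple of $x_1$ and conclude that both scalars vanish.
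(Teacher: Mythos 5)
Your proof is correct, and it takes a genuinely different route from the paper's. The paper fixes $t\in I_R\cap I_L$ and shows directly that $z_1tz_2\in S$ for all coset representatives $z_1,z_2$ of $R/S$: when $z_1,z_2$ represent cosets of distinct $S_i$, the element $z_1tz_2$ lands in $S_i\cap S_j=S$; the remaining products $x_1tx_1$ and $x_2tx_2$ are handled by multiplying by $x_1+x_2$ and again playing two subrings against each other. This gives $RtR\subset S$, so $RtR$ is a two-sided ideal in $S$, hence zero, hence $t=1_R\cdot t\cdot 1_R=0$. You instead show that $K=I_R\cap I_L$ is itself a two-sided ideal of $R$, by first establishing that $I_R$ and $I_L$ are ideals of $S$ (with $I_R$ a right ideal and $I_L$ a left ideal of $R$), then exploiting the decompositions $S=F_3\cdot 1\oplus I_R=F_3\cdot 1\oplus I_L$ to extract scalar components, and using associativity of $x_1sx_2$ together with the $F_3$-linear independence of $x_1,x_2$ in $R/S$ to force those scalars to vanish. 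Your approach is more structural and gives the slightly sharper conclusion that $K$ is itself an $R$-ideal, rather than merely $RtR$ being one; the paper's approach is more elementary, trading the direct-sum bookkeeping for a hands-on manipulation of coset representatives and the pairwise-intersection property $S_i\cap S_j=S$. Both arguments use $1_R\in S$ and the earlier lemmas on $I_R(r)$ and $[S:I_R(r)]$ at exactly the points you'd expect, and both are essentially the same length.
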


\begin{proof}
Let $t\in I_R(r)\cap I_L(r)$. We will show that $z_1 t z_2\in S$ for $z_i$ varying among any of our eight coset representatives. Suppose first that $z_1$ and $z_2$ belong to different $S_i$. Since $z_1 t\in S$, $z_1 t z_2$ lies in the same $S_i$ as $z_2$ does. Since $tz_2\in S$, $z_1t z_2$ lies in the same $S_i$ as $z_1$ does. Since the intersection of two distinct $S_i$ is $S$, we have $z_1 t z_2 \in S$. It remains only to show that $x_1tx_1$ and $x_2tx_2$ lie in $S$. 
We have $x_1 t x_1 = s + b x_1$ where $0\leq b\leq 2$. So $x_1 t(x_1 + x_2) = s + x_1tx_2 + bx_1$. The left side lies in $S_3$, the right side in $S_1$ since $x_1tx_2 \in S$, thus $b=0$. The case $x_2tx_2$ is similar. Therefore $RtR$ is an ideal of $R$ contained in $S$. This forces $RtR=0$, so $t=0$. 
\end{proof}

Since $3\leq |S|=[S:I_R(r)\cap I_L(r)]\leq [S:I_R(r)][S:I_L(r)]= 9$, we see that $|R|=27$ or $|R|=81$. We already showed what was possible if $|R|=27$ in Corollary \ref{2 choices for 27}. The next result completes the proof of part a) of Theorem \ref{Theorem 2}

\begin{prop}  \label{prop none of 81}
If $R\in S(4)$ then $|R|\neq 81$. 
\end{prop}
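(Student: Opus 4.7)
The plan is to assume $|R|=81$ and produce a proper quotient of $R$ isomorphic to $T_2(F_3)$; since $T_2(F_3)\in S(4)$ by Example \ref{examplenoncomm}, this contradicts the defining requirement that $\sigma(R/I)>4$ for every proper ideal $I$ of $R\in S(4)$. From $[R:S]=9$ and $|R|=81$ we get $|S|=9$. Setting $I:=I_R(r)$ and $J:=I_L(r)$ for any $r\in R\setminus S$, the three preceding lemmas give that $I$ and $J$ are independent of $r$, have order $3$, and satisfy $I\cap J=0$; hence $S=I\oplus J$, and their definitions yield $IR\subseteq S$ and $RJ\subseteq S$. Since $1_R\in S$, write $1_R=e+f$ with $e\in I$ and $f\in J$; neither summand can vanish, since $1_R\in I$ (or $J$) would force $R=R\cdot 1_R\subseteq S$.

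The key step is to establish that $ef=fe=0$. Both products lie in $S$, and one computes $(ef)R=e(fR)\subseteq eR\subseteq S$ and $R(ef)=(Re)f\subseteq Rf\subseteq S$, so $ef$ lies in both $I$ and $J$, forcing $ef\in I\cap J=0$; the same reasoning gives $fe=0$. Consequently $e^2=e(e+f)=e$ and $f^2=(e+f)f=f$, exhibiting $\{e,f\}$ as a complete system of nonzero orthogonal idempotents summing to $1_R$.

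Next, I carry out a Peirce size count. The decomposition $R=eR\oplus fR$ (immediate from $1_R=e+f$ and $ef=0$) together with $eR\subseteq S$ rules out $|eR|=1$ (which gives $e=0$) and $|eR|=9$ (which gives $eR=S$, hence $1_R=er$ for some $r$, and then $e=e^2 r=er=1_R$, forcing $f=0$), so $|eR|=3$. Symmetrically $|Rf|=3$. Because $eRe$ contains the distinct elements $0,e,2e$, the direct-sum decompositions $eR=eRe\oplus eRf$ and $Rf=eRf\oplus fRf$ force $|eRe|=|fRf|=3$, $eRf=0$, and $|fRe|=9$. In particular $eRe\cong fRf\cong F_3$, and $M:=fRe$ is a $2$-dimensional $F_3$-vector space on which $f$ and $e$ act as the identity from the left and right respectively, so the bimodule structure reduces to scalar actions.

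Finally, let $V\subset M$ be any $1$-dimensional $F_3$-subspace. Since both $fRf$ and $eRe$ act on $M$ by scalars and $M\cdot M\subseteq fR(ef)Re=0$ (with $eRf=0$ already), the subgroup $V$ is a $2$-sided ideal of $R$. The quotient $R/V$ has order $27$ and Peirce decomposition $F_3\oplus (M/V)\oplus F_3$ with $eRf=0$, which realizes it as the ring of lower triangular $2\times 2$ matrices over $F_3$, isomorphic to $T_2(F_3)$. Hence $R/V\cong T_2(F_3)\in S(4)$, yielding the contradiction. The main obstacle is the identity $ef=fe=0$: the idea is that the one-sided containments $IR\subseteq S$ and $RJ\subseteq S$ combine to place both products simultaneously in $I$ and in $J$. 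Once the orthogonal idempotent decomposition is in hand, the size count and the sub-bimodule argument proceed routinely.
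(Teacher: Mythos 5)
Your proof is correct, and it takes a genuinely different and more self-contained route than the paper's. The paper proves this by citing the Derr--Orr--Peck classification of noncommutative unital rings of order $p^4$ and then running a case analysis on the size of the Jacobson radical $J(R)$: when $J=0$ one gets $M_2(F_3)$ with $\sigma=7$; when $|J|=27$ one applies Proposition~\ref{noncomm resfield $p$ prop}; when $|J|=3$ the unique ring is $F_3\times T_2(F_3)$; when $|J|=9$ one checks that the ring with $R/J\cong F_9$ has covering number $10$, and the four rings with $R/J\cong F_3\times F_3$ all surject onto $T_2(F_3)$. Your argument, by contrast, works entirely from the structural lemmas already proved about $I_R(r)$ and $I_L(r)$: you observe that $|S|=9$, $|I_R|=|I_L|=3$, and $I_R\cap I_L=0$ force $S=I_R\oplus I_L$, decompose $1_R=e+f$, and use the one-sided containments $I_RR\subseteq S$ and $RI_L\subseteq S$ to show $ef,fe\in I_R\cap I_L=0$, giving orthogonal idempotents; the Peirce size count then forces $eRf=0$, $|fRe|=9$, and the scalar bimodule structure, from which a one-dimensional subspace of $fRe$ is a two-sided ideal with quotient $T_2(F_3)$. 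I checked the key steps: $(ef)R=e(fR)\subseteq eR\subseteq I_RR\subseteq S$ and $R(ef)\subseteq Rf\subseteq RI_L\subseteq S$ do place $ef$ in both $I_R$ and $I_L$ (and the analogous computation works for $fe$, using $fS\subseteq S$ and $Se\subseteq S$); the exclusion of $|eR|=9$ via $1_R=er\Rightarrow e=1_R$ is right; and the verification that the subspace $V\subset fRe$ is a two-sided ideal (using $eRf=0$, $M^2=0$, and the scalar actions of $eRe$ and $fRf$) is complete. Your approach buys a shorter, classification-free proof and yields the cleaner structural statement that every hypothetical $R\in S(4)$ of order $81$ must surject onto $T_2(F_3)$; what you lose is the paper's explicit computation of $\sigma(R)$ for each noncommutative ring of order $81$, which has some independent interest but is not needed for the proposition. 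One small presentational note: you reuse $J$ both for $I_L(r)$ and (implicitly, via context) the Jacobson radical; choosing a different letter would avoid any collision.
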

\begin{proof}
Suppose $|R|=81$ and $R\in S(4)$. We have $|S|=9$ and $|S_i|=27$ for $1\leq i\leq 4$. If $R$ was commutative, then $I_R(r)$ would be a 2-sided ideal in $S$, hence would be zero. But this would force $|S|=3$, so $R$ is noncommutative. Let $J=J(R)$ denote the Jacobson radical of $R$. If $J=0$ then $R$ is semisimple, so by order considerations and noncommutativity $R=M_2(F_3)$. But $\sigma(M_2(F_3) ) = 7\neq 4$ by \cite{Wer18}. If $|J|=27$, then $R$ is a local ring with residue field $F_3$. By Proposition \ref{noncomm resfield $p$ prop}, $R\not\in S(4)$. This leaves $|J| \in \{3, 9\}$. Noncommutative unital rings of order $p^4$, for $p$ a prime, have been classified in \cite{Derr94}; we make use of their list for $R$ having characteristic $p$ and $|J| \in  \{p, p^2\}$. There are six such rings: five with $|J|=3^2$, and one with $|J|=3$. 

The unique noncommutative ring $R$ of order 81 with $|J|=3$ is $R=F_3\times T_2(F_3)$, which clearly surjects onto $T_2(F_3)$. So $\sigma(R)=4$ but $R\not\in S(4)$. 





The unique noncommutative ring $R$ of order 81 with $R/J\cong F_9$ is given by matrices of the form $$\left\{\begin{bmatrix}
a & b \\  0 & a^3
\end{bmatrix}\right\}$$ where $a, b \in F_9$. The only nonzero 2-sided ideal of $R$ is $J$, which is defined by $a=0$.  
Let $S_1$ be the index-3 subring defined by the condition $a\in F_3$. Suppose $M$ was another index-3 subring of $R$. Since $M\not\subset S_1$, $M$ contains a matrix $X$ as above with $a\in F_9\setminus F_3$. Then $[R:M]=3$ implies there exists $0\neq Y\in M\cap J$. But any such $X$ and $Y$ together generate $R$. So all other maximal subrings $S_2,\ldots , S_{\sigma(R)}$ have order at most 9. They all contain $1_R$ since they would otherwise be contained in $J$, hence in $S_1$. Thus $|S_i|=9$ for $2\leq i\leq \sigma(R)$ and $S_i\cap S_j = \{0, 1, 2\}$ for all $1\leq i\neq j\leq \sigma(R)$. Since $|R\setminus S_1| = 54$ and each other $S_i$ contributes 6 additional elements, we see that $\sigma(R)=10$.

If $R$ is noncommutative of order 81 and $R/J\cong F_3\times F_3$, then $R$ is one of the following four rings: 
$$\left\{   \begin{bmatrix}
a &  & \\ 
b & a &  \\ 
c & & d
\end{bmatrix} \right\} , 
\left\{\begin{bmatrix}
a & b & c \\ & a & \\
& & d
\end{bmatrix}  \right\},
\left\{\begin{bmatrix}
a & b & c\\ 
& d & \\
& & d
\end{bmatrix}  \right\},
\left\{\begin{bmatrix}
a & & b & \\ 
& a &  & \\
& & d & \\
& c & & d
\end{bmatrix}  \right\}$$ where $a, b, c, d\in F_3$. 
In each case, let $B$ be the matrix with $a=c=d=0$ and $b=1$. The $F_3$-span of $B$ is a two-sided ideal, and $R/(B)$ is noncommutative, so $R/(B)\cong T_2(F_3)$. Thus $\sigma(R)=4$ but $R\not\in S(4)$. 
\end{proof}

\section{Rings in $S(4)$ of order $2^t$}

Now we commence the systematic study of the 2-rings in $S(4)$, which presents more difficulties than the 3-rings. 

\begin{prop}
Suppose $(R, S_1, S_2, S_3, S_4)$ is a good tuple with $|R|$ a power of 2. Then $[R: \bigcap\limits_{i=1}^4 S_i]= 8$. 
\end{prop}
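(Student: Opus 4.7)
The plan is to show $[R:S] \in \{2, 4, 8, 16\}$ and eliminate all values except $8$. By Theorem~\ref{Theorem 1}, $R$ has characteristic $p < 4$, and since $|R|$ is a power of $2$, $p = 2$; the proof of that theorem also shows $2R \subseteq S$, so $V := R/S$ is an $F_2$-vector space. Combined with Lemma~\ref{key inequality} ($[R:S] \leq 4! = 24$), this forces $[R:S] \in \{2, 4, 8, 16\}$. Setting $T_i := S_i/S$, these are four distinct proper $F_2$-subspaces of $V$ whose union is $V$, whose intersection is $0$, and such that no three of them cover $V$ (else $\sigma(R) \leq 3$). The cases $[R:S] \in \{2, 4\}$ are immediate: when $[R:S] = 2$, $V = F_2$ has $\{0\}$ as its only proper subspace, forcing all $T_i = 0$ and all $S_i = S$, contradicting distinctness; when $[R:S] = 4$, $V = F_2^2$ has four proper subspaces total ($\{0\}$ and three $1$-dim subspaces), and distinctness forces one $T_i = \{0\}$, making the corresponding $S_i = S$ redundant and giving $\sigma(R) \leq 3$.

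For the main case $[R:S] = 16$, so $V = F_2^4$, I would case-split on the dimensions of the $T_i$, each in $\{1, 2, 3\}$. Several subcases reduce to elementary estimates: (i) if some $\dim T_i = 1$, the other three proper subspaces cover at most $14$ elements (three hyperplanes with linearly independent normals is optimal, since linearly dependent normals would already give a $3$-cover), yielding $|\bigcup_i T_i| \leq 15 < 16$; (ii) if all $\dim T_i = 2$, the nonzero elements covered total at most $4 \cdot 3 = 12 < 15$; (iii) if exactly one $T_i$ is a hyperplane $H$, each of the three $2$-dim $T_j$ adds at most $2$ elements outside $H$, for $\leq 6 < 8$; (iv) if three $T_i$ are hyperplanes, their normals are linearly independent (else they cover already), and their $1$-dim intersection $L$ must lie in the fourth $T_i$, so $\bigcap_i T_i \supseteq L \neq 0$; (v) if all four $T_i$ are hyperplanes, covering $V$ forces an $F_2$-linear relation of odd Hamming weight on the normals, necessarily of weight $3$ (weight $1$ would give some $v_i = 0$), meaning three of the hyperplanes already cover $V$.

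The main obstacle is the remaining case of two hyperplanes $T_a, T_b$ with linearly independent normals and two $2$-dim subspaces $T_c, T_d$. Here $L := T_a \cap T_b$ is $2$-dim and $A := V \setminus (T_a \cup T_b) = p_0 + L$ is a $4$-element affine plane. Size constraints force $T_c \cap A$ and $T_d \cap A$ to each have size exactly $2$ and to partition $A$, with each pair a translate of the $1$-dim subspace $L_c := T_c \cap L$ or $L_d := T_d \cap L$. The key combinatorial fact---that every perfect matching of a $4$-element affine plane over $F_2$ consists of two parallel pairs---then yields $L_c = L_d$, so $\bigcap_i T_i \supseteq L_c \neq 0$, contradicting $\bigcap_i T_i = 0$. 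With all subcases eliminated, $[R:S] = 8$.
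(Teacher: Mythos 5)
Your proof is correct, but it takes a noticeably more exhaustive route in the main case $[R:S]=16$ than the paper does. The paper first argues by counting that the two largest $T_i = S_i/S$ must both be hyperplanes of $V = F_2^4$ (four order-$4$ subgroups cover at most $13$ elements, and then one hyperplane plus three order-$\leq 4$ subgroups cover at most $8+6=14$), then chooses coordinates $w,x,y,z$ with $T_1 = \langle w,x,y\rangle$, $T_2 = \langle w,x,z\rangle$, and observes that the four elements outside $T_1\cup T_2$ must split two-and-two between $T_3$ and $T_4$ (a subgroup containing three of them contains all four, forcing a $3$-cover); in each of the three possible $2$--$2$ splits, the sum of the two elements in $T_3$ equals the sum of the two in $T_4$ and also lies in $T_1\cap T_2$, producing a nonzero element of $\bigcap T_i$. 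This last step requires no constraint on $\dim T_3, \dim T_4$ at all, which is what lets the paper avoid a full case split. Your proof instead enumerates all dimension profiles of $(T_1,\dots,T_4)$ in $\{1,2,3\}^4$, dispatching most by inclusion-exclusion estimates and handling the genuine case of two hyperplanes plus two planes by the affine-matching observation ($L_c = L_d$). Both are sound; your version trades the paper's economy for a more mechanical, self-contained enumeration, and your treatment of the two-hyperplane/two-plane subcase is essentially a coordinate-free restatement of the paper's three-split check. A small stylistic note: for $[R:S]=4$, the cleanest phrasing is the paper's---that $R/S$ would need four distinct nontrivial proper subgroups, which a group of order $4$ does not have---rather than deriving $T_i=\{0\}$ and appealing to redundancy, though your version reaches the same conclusion.
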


\begin{proof}
Let $S= \bigcap\limits_{i=1}^4 S_i$. Since $[R:S]=[R:S_i][S_i:S]\leq 4! = 24$ by Lemma \ref{key inequality}, we have $[R:S]\in \{ 4, 8, 16 \}$. 

We cannot have $[R:S]=4$, since the abelian group $R/S$ has the $S_i/S$ as four distinct nontrivial proper subgroups, and no group of order 4 has that many nontrivial proper subgroups.  
 
Suppose $[R:S]=16$. By Theorem \ref{Theorem 1}, $R$ has characteristic 2, so $R/S = C_2^4$. Let the subgroups $S_i/S$ be ordered so that $|S_i/S|$ is nonincreasing. Since the intersection of four index-4 subgroups contains the identity, they cannot cover the whole group. Thus $|S_1/S| =8$. Any subgroup of order 4 will intersect $S_1/S$ nontrivially, hence the remaining $S_i/S$ cannot all be order 4 (or less), since they would not be able to cover $R/S$. Thus $|S_2/S|=8$. We will show that the intersection $\bigcap\limits_{i=1}^4 S_i/S$ must be nontrivial, contradicting the definition of $S$. 
Let $w, x, y, z$ be a generating set for $C_2^4$. Without loss of generality, $S_1/S$ is generated by $\{w, x, y\}$ and $S_2/S$ by $\{w, x, z  \}$. The 4 elements not in $S_1/S\cup S_2/S$ are $y+z$, $w+y+z$, $x+y+z$, and $w+x+y+z$. A group containing any three of these will contain the fourth, so they must be split evenly in two between $S_3/S$ and $S_4/S$. If $y+z$, $w+y+z\in S_3/S$, then $w\in S/S$. If $y+z$, $x+y+z\in S_3/S$, then $x\in S/S$. And if $y+z$, $w+x+y+z\in S_3/S$, then $w+x\in S/S$. So $y+z$ cannot be paired with anything, which is a contradiction.
\end{proof}

Suppose $(R, S_1, S_2, S_3, S_4)$ is a good tuple, with $R$ a 2-ring and $S=\bigcap\limits_{i=1}^4 S_i$. For $r\in R\setminus S$, define the sets 
$$I_R(r):=\{s\in S: rs\in S  \}$$ 
$$I_L(r):=\{s\in S: sr\in S  \}$$ 
$$I_R := \bigcap\limits_{r\in R} I_R(r)$$  
$$I_L:=\bigcap\limits_{r\in R} I_L(r)$$

All four are additive subgroups of $R$. The set $I_R$ is a left $R$-ideal, and $I_L$ is a right $R$-ideal. Clearly $I_R(r) = I_R(r+s)$ for any $s\in S$, so that in studying these sets we may assume $r$ is one of the seven nontrivial elements of $R/S$. Our strategy is to bound $|S| = [S:I_R\cap I_L] |I_R\cap I_L| \leq [S:I_R][S:I_L]|I_R\cap I_L|$ by bounding $[S:I_R]$, $[S:I_L]$, and $|I_R\cap I_L|$. As a last observation, note that if $T$ is a subring of $R$ containing $S$ and $r$, then the additive map $s\mapsto rs: S\to T$ induces an injection $S/I_R(r)\to T/S$. Below we will use letters $x, y, z$ to denote elements of $R/S$ and any lift of them to $R$; the reader can check that this creates no issue. 

 Since $R$ has characteristic 2 and $[R:S]=8$, we have $R/S\cong C_2^3$. Assume the $S_i$ have been ordered so that $|S_i/S|$ is nonincreasing. If $S_1/S$ and $S_2/S$ did not both have order 4, then we could not form a cover of $R/S$. So $S_1/S =\{0, x, y, x+y  \}$ and $S_2/S = \{ 0, x, z, x+z \}$, where $x, y, z$ is some set of generators for $R/S$. 

Suppose that $S_3/S$ and $S_4/S$ also had order 4. From the inclusion-exclusion principle, $|S_i/S\cup S_j/S\cup S_k/S| = 6+ |S_i/S\cap S_j/S\cap S_k/S|$, where $i, j, k$ are distinct. So for our cover to be irredundant, we must have $S_i\cap S_j\cap S_k=S$ whenever $i, j,k$ are distinct.  We can assume, without loss of generality, that $S_3/S = \{0, y, z, y+z\}$. But now $S_4/S$, which must contain $x+y+z$, cannot contain $x$, $y$, or $z$, as this would create a nontrivial triple intersection. Similarly $S_4/S$ cannot contain $x+y$, $x+z$, or $y+z$, as any of these would, with $x+y+z$, yield one of $x$ or $y$ or $z$. So $|S_4/S|=2$. There are two options for $|S_3/S|$. 


\subsection{The case $|S_3/S| = 2$} We write $S_3/S = \{0,  y+z  \}$ and $S_4/S = \{0,  x+ y + z  \}$.  
Since $R/S_3$ and $R/S_4$ have nonprime order, the maximal subrings $S_3$ and $S_4$ contain $1_R$ by Lemma 2.1. of \cite{Wer18}. So $1_R\in S_3\cap S_4 = S$, and in particular, $|S|\geq 2$.

\begin{lemma}
We have $I_R(x) = I_R(y+z) = I_R(x+y+z) \supset I_R(y) = I_R(x+y)= I_R(z) = I_R(x+z) = I_R$. 
\end{lemma}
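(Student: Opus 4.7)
The plan is to encode the data $\{I_R(r)\}_r$ via a small classification of additive endomorphisms of $R/S$. For each $s\in S$, right multiplication by $s$ gives a map $R\to R$, $r\mapsto rs$. Since $S$ is a subring, this map sends $S$ into $S$, and since $s\in S_i$ and each $S_i$ is a subring, it sends $S_i$ into $S_i$. Thus it descends to an additive endomorphism $\bar\psi_s$ of $R/S\cong C_2^3$ that stabilizes every subgroup $S_i/S$. By definition, $I_R(r)=\{s\in S:\bar\psi_s(r+S)=0\}$, so the description of each $I_R(r)$ is governed by the kernel of $\bar\psi_s$.

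First I would classify all additive endomorphisms $\phi$ of $R/S$ stabilizing the four subgroups $\{0,x,y,x+y\}$, $\{0,x,z,x+z\}$, $\{0,y+z\}$, $\{0,x+y+z\}$. Stabilization of the first two (whose intersection is $\{0,x\}$) forces $\phi(x)\in\{0,x\}$, $\phi(y)\in S_1/S$, $\phi(z)\in S_2/S$; stabilization of the two order-2 subgroups then gives the extra constraints $\phi(y)+\phi(z)\in\{0,y+z\}$ and $\phi(x)+\phi(y)+\phi(z)\in\{0,x+y+z\}$. A short case analysis on $\phi(y)$ and $\phi(z)$ shows that exactly four such $\phi$ exist: the zero map, the identity, the map $\phi_C$ with $\phi_C(x)=0$ and $\phi_C(y)=\phi_C(z)=x$, and the map $\phi_D$ with $\phi_D(x)=x$, $\phi_D(y)=x+y$, $\phi_D(z)=x+z$.

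Next I would read off the kernels on nonzero cosets: the identity and $\phi_D$ are injective, $\phi_C$ annihilates exactly $\{0,x,y+z,x+y+z\}$, and the zero map annihilates everything. Combining this with $I_R(r)=\{s:\bar\psi_s(r+S)=0\}$ yields
$$I_R(x)=I_R(y+z)=I_R(x+y+z)=\{s\in S:\bar\psi_s\in\{0,\phi_C\}\}$$
and
$$I_R(y)=I_R(z)=I_R(x+y)=I_R(x+z)=\{s\in S:\bar\psi_s=0\},$$
the second set plainly contained in the first. Finally $I_R=\bigcap_{r}I_R(r)=\{s:\bar\psi_s=0\}$ matches the second set, establishing the last equality.

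The main obstacle is the subgroup-preservation classification; it is elementary, but requires patient tracking of the four constraints placed on $\phi(x)$, $\phi(y)$, $\phi(z)$, and the observation that conditions (iv) and (v) above couple the $x$-coefficients of $\phi(y)$ and $\phi(z)$ in a way that eliminates all but four candidates.
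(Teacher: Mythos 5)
Your proof is correct, and it takes a genuinely different (more conceptual) route than the paper's. The paper proves the lemma directly: it shows, by explicit chains of implications, that if $(x+y+z)s\notin S$ then $rs\notin S$ for every nontrivial coset representative $r$, then does the analogous argument for $y+z$, and finally deduces the remaining equalities one at a time by adding and subtracting $xs$, $ys$, $zs$. Your approach instead packages everything into the observation that right multiplication by $s\in S$ descends to an additive endomorphism $\bar\psi_s$ of $R/S\cong C_2^3$ stabilizing the four subgroups $S_i/S$, and then classifies all such endomorphisms. I checked the classification: writing $\phi(x)=ax$, $\phi(y)=bx+cy$, $\phi(z)=dx+ez$, the constraints from $S_3/S$ give $b=d$, $c=e$ and from $S_4/S$ give $a=c=e$, leaving exactly four maps $\{0,\mathrm{id},\phi_C,\phi_D\}$ as you claim, with kernels $\{\text{all}\}$, $\{0\}$, $\{0,x,y+z,x+y+z\}$, $\{0\}$ respectively. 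Reading off $I_R(r)=\{s:\bar\psi_s(r+S)=0\}$ then gives the lemma at once. Your method is cleaner and modular: the single classification step replaces a sprawl of ad hoc implications, and the left-multiplication version of the lemma (used elsewhere in the section) follows with no extra work by symmetry. What the paper's approach buys is immediacy — it needs no enumeration and no linear-algebra bookkeeping, just the observation that $S_i\cap S_j=S$ for $i\neq j$. Both are valid; your version makes the underlying structure more visible.
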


\begin{proof}
Suppose that $(x+y+z)s\not\in S$, so that $(x+y+z)s = x+y+z + s'$ for some $s'\in S$. Then none of $xs, ys, zs, (x+y)s, (x+z)s, (y+z)s$ can lie in $S$. For example, if $xs\in S$, then $(y+z)s = x + y + z + (s' + xs)$. The left side lies in the subring $S_3$, and the right side is in $S_4\setminus S$, which is absurd. The other cases are completely analogous, and rely only on the fact that $S_i\cap S_4=S$ for $i\neq 4$. Thus $I_R(r)\subset I_R(x+y+z)$ for every $r\in R\setminus S$.

Suppose that $(y+z)s\not\in S$, so that $(y+z)s = y+z + s'$ for some $s'\in S$. Then none of $xs, ys, zs, (x+y)s, (x+z)s, (x+y+z)s$ can lie in $S$. For example, if $xs\in S$ then $(x+y+z)s = y + z + (s'+ xs)$. The left side lies in $S_4$ and the right side lies in $S_3\setminus S$, which is absurd. The other cases are completely analogous, and rely only on the fact that $S_i\cap S_3=S$ for $i\neq 3$. Thus $I_R(r) \subset I_R(y+z)$ for every $r\in R\setminus S$. 
In particular, $I_R(x+y+z) = I_R(y+z)$.

Suppose $s\in I_R(y+z) = I_R(x+y+z)$, so that $(x+y+z)s\in S$ and $(y+z)s\in S$. We see immediately that $xs\in S$, and so $I_R(x) = I_R(y+z) = I_R(x+y+z)$. 

If $s\in I_R(y)\subset I_R(x)$ we have $ys, xs\in S$ so $(x+y)s\in S$. Symmetry implies $I_R(y)\subset I_R(x+y)$, and an identical argument shows $I_R(z)=I_R(x+z)$. Similarly, if $x\in I_R(y)\subset I_R(y+z)$ we have $ys\in S$ and $(y+z)\in S$, which immediately yields $zs\in S$, so $I_R(y)\subset I_R(z)$. By symmetry $I_R(y) = I_R(z)$. Since $I_R$ is the intersection of all $I_R(r)$, there is nothing left to show. 
\end{proof}





\begin{lemma}
We have $2\leq [S:I_R]\leq 4$ and $2\leq [S:I_L]\leq 4$. 
\end{lemma}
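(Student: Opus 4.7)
The plan is to combine the previous lemma with the coset-map injection observation stated just before the case analysis. First I would note the $I_L$-analog of the previous lemma: by a literally symmetric argument, exchanging right multiplication with left multiplication of $s$ (legal because each $S_i$ is a subring, hence closed under left as well as right multiplication by $s\in S\subset S_i$, and because the decomposition of $R/S$ and the relations $S_i\cap S_j=S$ are symmetric in the two sides), one obtains $I_L(x)=I_L(y+z)=I_L(x+y+z)\supset I_L(y)=I_L(x+y)=I_L(z)=I_L(x+z)=I_L$. In particular $I_R=I_R(y)$ and $I_L=I_L(y)$, so it suffices to bound $[S:I_R(y)]$ and $[S:I_L(y)]$.

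For the lower bounds: we already know $1_R\in S_3\cap S_4\subset S$. But $y\cdot 1_R=y\notin S$, so $1_R\notin I_R(y)$, giving $[S:I_R]=[S:I_R(y)]\geq 2$. The same argument with $sy$ in place of $ys$ shows $[S:I_L]\geq 2$.

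For the upper bounds the key observation is that $y$ lies in $S_1$. Since $S\subset S_1$ and $y\in S_1$, both maps $s\mapsto ys$ and $s\mapsto sy$ carry $S$ into $S_1$. Applying the injection observation recalled just before the case analysis (the additive map $s\mapsto rs\colon S\to T$ induces an injection $S/I_R(r)\hookrightarrow T/S$ whenever $S\subset T$ and $r\in T$), with $r=y$ and $T=S_1$, gives injections $S/I_R(y)\hookrightarrow S_1/S$ and $S/I_L(y)\hookrightarrow S_1/S$. Since $|S_1/S|=4$, both quotients have order at most $4$, establishing $[S:I_R]\leq 4$ and $[S:I_L]\leq 4$.

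The only content beyond direct quotation of the previous lemma is the $I_L$-analog, and its verification is a routine transcription of the $I_R$ argument with the two sides swapped; I anticipate no genuine obstacle. The conceptual point of the lemma is simply that $y$ is trapped inside the largest of the four subrings, so multiplication by $y$ cannot move $S$ further than one extra factor of $[S_1:S]=4$.
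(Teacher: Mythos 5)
Your argument is correct and is essentially the paper's proof: the lower bound comes from $1_R\in S\setminus I_R(y)$, the identification $I_R=I_R(y)$ comes from the previous lemma, and the upper bound comes from the injection $S/I_R(y)\hookrightarrow S_1/S$ via $s\mapsto ys$ (with the symmetric statement for $I_L$). The only thing you add is an explicit remark that the previous lemma's left-sided analog holds by symmetry, which the paper compresses into ``the case for $I_L$ is symmetric.''
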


\begin{proof}
Since $1_R\in S \setminus I_R$, we know $S/I_R$ is nontrivial. Since $I_R=I_R(y)$, we have an injection $[s]\mapsto [ys]:S/I_R\to S_1/S$. The case for $I_L$ is symmetric. 
\end{proof}

\begin{lemma}
We have $|I_R\cap I_L|\leq 2$. 
\end{lemma}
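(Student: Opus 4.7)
The plan is to associate to each $t\in I_R\cap I_L$ the $F_2$-bilinear form $\psi_t\colon V\times V\to V$, where $V := R/S$, defined by $\psi_t(v_1,v_2) := v_1 t v_2 \bmod S$, and to extract from $\psi_t$ an injective $F_2$-linear map $\phi\colon I_R\cap I_L\to F_2$. Well-definedness of $\psi_t$ on $V\times V$ uses $t\in I_L$ for the ambiguity in $v_1$ (a shift $z_1\mapsto z_1+s$ produces $s(tz_2)\in S\cdot S = S$) and $t\in I_R$ for the ambiguity in $v_2$; linearity in $t$ is manifest. The key combinatorial input will be the characteristic-$2$ identity $x = w+u$ in $V$, where $w := y+z$ and $u := x+y+z$ so that $S_3/S = \{0,w\}$ and $S_4/S = \{0,u\}$.

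The core step is to show $\psi_t(v_1,v_2) = 0$ whenever $v_1\in\{w,u\}$ or $v_2\in\{w,u\}$. The familiar sandwich $(z_1 t)z_2\in S\cdot S_j\subseteq S_j$ and $z_1(tz_2)\in S_i\cdot S\subseteq S_i$ places $z_1 t z_2\in S_i\cap S_j$, and this intersection collapses to $S$ as long as $\{i,j\}\ne\{1,2\}$; this disposes of every pairing involving $w$ or $u$ except the two genuine diagonals $wtw$ and $utu$. For these I would invoke $u = w+x$ to write $wtw = wtu + wtx$ and check by sandwich that $wtu\in S_3\cap S_4 = S$ and $wtx\in S_1\cap S_3 = S$, giving $wtw\in S$; the symmetric manoeuvre settles $utu\in S$. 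Bilinearity of $\psi_t$ together with $x = w+u$ then forces $\psi_t(x,-) = \psi_t(w,-)+\psi_t(u,-)\equiv 0$, and symmetrically $\psi_t(-,x)\equiv 0$. Consequently $\psi_t$ factors through the one-dimensional quotient $V/\langle x,w\rangle$ in each argument, and is determined by the single value $\phi(t) := \psi_t(\bar y,\bar y) = yty \bmod S$; comparing with $ytz\in S_1\cap S_2$ via $yty - ytz = ytw\in S$ shows this value lies in $(S_1\cap S_2)/S\cong F_2$.

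Linearity of $\phi\colon I_R\cap I_L\to F_2$ in $t$ is inherited from $\psi_t$, so it remains to show $\phi$ is injective. If $\phi(t) = 0$ then $\psi_t\equiv 0$, so $z_1 t z_2\in S$ for every pair of coset representatives. For arbitrary $r,r'\in R$, decomposing $r = s_r + z_r$, $r' = s_{r'} + z_{r'}$ with $s_\bullet\in S$, the expansion of $rtr'$ has four terms: the pure $S$-term lands in $S$ trivially, the two mixed terms $s_r(tz_{r'})$ and $(z_rt)s_{r'}$ land in $S\cdot S = S$ via $t\in I_L$ and $t\in I_R$ respectively, and the pure coset term by hypothesis. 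Hence $RtR$ is a two-sided ideal of $R$ contained in $S$, and such an ideal must vanish because $R\in S(4)$; we conclude $t = 1_R\cdot t\cdot 1_R = 0$, so $\phi$ is injective and $|I_R\cap I_L|\leq 2$. The step I expect to be most delicate is the diagonal $wtw\in S$: the naive sandwich only places it in $S_3$, and the characteristic-$2$ relation $u = w+x$ together with the auxiliary $wtx\in S$ is exactly what bridges the gap. The fact that $|I_R\cap I_L|$ can be as large as $2$, rather than forced to be $1$ as in the characteristic-$3$ case, is a direct reflection of $S_1\cap S_2$ being strictly larger than $S$ in the present setup.
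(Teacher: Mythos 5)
Your proof is correct and follows essentially the same route as the paper: the same sandwich argument for $z_1 t z_2\in S_i\cap S_j$, the same use of $u=w+x$ (equivalently, expanding $(y+z)t(x+y+z)$) to handle the diagonals $wtw$ and $utu$, and the same reduction to the single residue $yty\bmod S\in(S_1\cap S_2)/S\cong F_2$. The only difference is presentational: you package the element-by-element computation into a bilinear form $\psi_t$ and an additive map $\phi$, whereas the paper argues directly that two nonzero $a,b$ with $yay,yby\notin S$ must satisfy $y(a+b)y\in S$ and hence $a=b$.
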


\begin{proof}
Let $t\in I_R\cap I_L$. We have $(y+z)tx$, $(y+z)ty$, $(y+z)tz$, $xt(y+z)$, $yt(y+z)$, and $zt(y+z)$ all lying in $S$ since they lie in $S_3\cap S_i=S$ for $i=1, 2$. The same holds if we replace all occurences of $y+z$ with $x+y+z$, since $S_4\cap S_i=S$ for $i=1, 2$. From this we can conclude that $(y+z)t(y+z)\in S$ and that $ (x+y+z)t(x+y+z) = xtx + xt(y+z) + (y+z)tx + (y+z)t(y+z)\in S$. Thus $xtx\in S$. Since $(x+y+z)ty$ and $(y+z)ty$ lie in $S$, their sum $xty\in S$. Similarly $ytx\in S$. These then force $ztx, xtz\in S$.  

The remaining elements to consider are $yty, ztz, zty$, and $ytz$. From the above we see that $yty$, $zty$, $ztz$, and $ytz$ are all congruent to one another modulo $S$. So $yty\in S$ iff $RtR\subset S$ iff $t=0$. Since $ytz\in S_1\cap S_2$,  $yty\in S_1\cap S_2$. Suppose we have $0\neq a,b\in I_R\cap I_L$. If $yay, yby\not\in S$, then since $S_1/S\cap S_2/ S$ has order 2, we have $y(a+b)y\in S$. 
\end{proof}

\begin{corollary} \label{2nd Corollary}
If $(R, S_1, S_2, S_3, S_4)$ is a good tuple with $[S_1:S]= [S_2:S] = 4$ and $[S_3:S]= [S_4:S] = 2$, then $R$ has order $2^t$ with $4\leq t\leq 8$. If $R$ is commutative, then $4\leq t\leq 5$.  
\end{corollary}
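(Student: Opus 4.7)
The plan is to assemble the three preceding lemmas (the ones bounding $[S:I_R]$, $[S:I_L]$, and $|I_R\cap I_L|$) into the claimed range for $|R|$, using only the elementary fact that for subgroups $A,B\subseteq S$ one has $[S:A\cap B]\leq [S:A]\cdot[S:B]$.

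For the upper bound in the general (possibly noncommutative) case, I would first write
\[
|S| \;=\; [S:I_R\cap I_L]\cdot |I_R\cap I_L| \;\leq\; [S:I_R]\cdot [S:I_L]\cdot |I_R\cap I_L|.
\]
Plugging in $[S:I_R]\leq 4$, $[S:I_L]\leq 4$ and $|I_R\cap I_L|\leq 2$ from the preceding three lemmas gives $|S|\leq 32$. Since $[R:S]=8$ was established at the start of this section, we get $|R|\leq 8\cdot 32 = 2^8$, which is the upper bound.

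For the lower bound, I would invoke the observation already made in the setup of the case $|S_3/S|=|S_4/S|=2$: the maximal subrings $S_3$ and $S_4$ both contain $1_R$ (by Lemma 2.1 of \cite{Wer18}, since $R/S_3$ and $R/S_4$ have nonprime order), hence $1_R\in S_3\cap S_4 = S$, so $|S|\geq 2$. Combined with $[R:S]=8$ this yields $|R|\geq 2^4$, completing the bound $4\leq t\leq 8$.

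For the commutative refinement $t\leq 5$, the key additional observation is that when $R$ is commutative one has $I_R=I_L$, and this common set is a \emph{two-sided} ideal of $R$ (it is already a left $R$-ideal by construction, and equals a right $R$-ideal). Because $(R,S_1,\ldots,S_4)$ is a good tuple, the intersection $S$ contains no nonzero two-sided ideal of $R$, so $I_R=0$. Then $|S|=[S:I_R]\leq 4$, and hence $|R|=8|S|\leq 2^5$, while the lower bound $|R|\geq 2^4$ is already in hand.

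The only place where anything subtle happens is the commutative step, where one has to notice that $I_R$ is automatically two-sided (so that the good-tuple assumption kicks it down to zero); everything else is mechanical assembly of the three preceding lemmas with the identity $[R:S]=8$.
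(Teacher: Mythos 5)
Your proof is correct and follows the paper's own argument essentially verbatim: the same chain of inequalities $2\leq|S|\leq[S:I_R][S:I_L]\,|I_R\cap I_L|\leq 2^5$ combined with $[R:S]=8$, and the same observation that commutativity makes $I_R=I_L$ a two-sided ideal of $R$ inside $S$, hence zero. (If anything, you phrase the two-sidedness claim a bit more carefully than the paper, which loosely calls it ``a 2-sided ideal in $S$.'')
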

\begin{proof}
We have $2\leq |S| = [S:I_R\cap I_L] |I_R\cap I_L| \leq [S:I_R][S:I_L]*2 \leq 2^5$, and so $2^4\leq |R|\leq 2^8$. If $R$ is commutative then $I_R=I_L$ is a 2-sided ideal in $S$, forcing $I_R = 0$. So  $2\leq |S| = [S:I_R]\leq 4$ and $16\leq |R|\leq 32$. 
\end{proof}

\subsection{The case $|S_3/S| =4$} 

Since $R/S_4$ has nonprime order, the maximal subring $S_4$ must contain $1_R$. In fact, $1_R$ must be contained in at least two of the other $S_i$ as well. For if, say, $1_R\not\in S_1\cup S_2$, then $S_1$ and $S_2$ would be maximal 2-sided ideals, and then the injection $R/(S_1\cap S_2) \to R/S_1\times R/S_2 \cong F_2\times F_2$ must be surjective since $[R:S_1\cap S_2]=4$, which forces $\sigma(R)=3$. But since $S_4\cap S_i=S$ for $1\leq i\leq 3$, we deduce that $1_R\in S$.   

Having written $S_1/S =\{0, x, y, x+y  \}$ and $S_2/S = \{ 0, x, z, x+z \}$, we consider what is possible for $S_3/S$ and $S_4/S$. Either $S_4/S = \{0, x+y+z\}$ or $S_4/S=\{0, y+z\}$; without loss of generality, we assume $S_4/S=\{0, x+y+z\}$. Then $S_3/S=\{0, y, z, y+z \}$ or $\{ 0, x+y, x+z, y+z  \}$, since there are 3 subgroups of order 4 containing $y+z$ and the third would contain $S_4/S$. Without loss of generality, we can take $S_3/S$ to be the former; the other choice is obtained by replacing $y$ and $z$ by $x+y$ and $x+z$, which doesn't change $S_1, S_2$, or $S_4$, nor the sizes of intersections of any number of the $S_i$. 


\begin{lemma}
We have $I_R(r)\subset I_R(x+y+z)$ for $r\in R\setminus S$. Also $I_R(x) = I_R(y+z)$, $I_R(y) = I_R(x+z)$, and $I_R(z) = I_R(x+y)$. 
\end{lemma}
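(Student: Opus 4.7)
The argument rests on one algebraic identity and one combinatorial observation. The identity is the characteristic-$2$ decomposition
\[
(x+y+z)s \;=\; xs + (y+z)s \;=\; ys + (x+z)s \;=\; zs + (x+y)s,
\]
valid for fixed lifts of the cosets and any $s\in S$. The observation is that whenever a lift of $r$ lies in $S_i$ and $s\in S\subset S_i$, the product $rs$ lies in $S_i$, so the image of $rs$ in $R/S$ is trapped in $S_i/S$. Intersecting over all $i$ containing a given nonzero coset, one reads off from the listed subgroups that $xs$ has image in $(S_1/S)\cap(S_2/S)=\{0,x\}$, while $(y+z)s$ has image in $S_3/S$ and $(x+y+z)s$ has image in $S_4/S=\{0,x+y+z\}$; analogous constraints hold for $y$, $z$, $x+y$, $x+z$.

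To prove $I_R(x)=I_R(y+z)$ I would use the first decomposition. If $xs\in S$ then $(y+z)s\equiv (x+y+z)s\pmod S$; the left side maps into $S_3/S$ and the right into $\{0,x+y+z\}$, subgroups that intersect trivially in $R/S$, forcing both products into $S$. Conversely, if $(y+z)s\in S$ then $xs\equiv (x+y+z)s\pmod S$, with the two sides constrained to $\{0,x\}$ and $\{0,x+y+z\}$ respectively, which again intersect trivially. The equalities $I_R(y)=I_R(x+z)$ and $I_R(z)=I_R(x+y)$ then follow by the identical argument applied to the remaining two decompositions.

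For the containment $I_R(r)\subset I_R(x+y+z)$, the case $r\equiv 0$ or $r\equiv x+y+z$ is immediate. Otherwise $r$ belongs to one of the pairs $\{x,y+z\}$, $\{y,x+z\}$, $\{z,x+y\}$ on which $I_R$ has just been shown to be constant. If $s\in I_R(r)$ then by the pairing $s$ also lies in $I_R$ of the complementary coset, so both summands on the right of the appropriate decomposition of $(x+y+z)s$ lie in $S$, whence $s\in I_R(x+y+z)$.

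I do not anticipate a serious obstacle: once the intersection identities in $R/S$ are tabulated, the proof is essentially bookkeeping. The one point requiring care is that $x,y,z$ simultaneously denote cosets and lifts, but every step is carried out for a single fixed choice of lifts and then reduced modulo $S$, so no ambiguity arises.
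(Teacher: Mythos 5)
Your proof is correct and rests on the same two ingredients as the paper's: the characteristic-$2$ decomposition $(x+y+z)s = rs + r's$ for the complementary pairs $\{x,y+z\}$, $\{y,x+z\}$, $\{z,x+y\}$, together with the observation that each product is trapped in a subgroup of $R/S$ meeting $S_4/S=\{0,x+y+z\}$ trivially. The only difference is organizational: you prove the three equalities $I_R(x)=I_R(y+z)$, etc., directly (in both directions) and then deduce $I_R(r)\subset I_R(x+y+z)$ as a corollary, whereas the paper establishes the containment first via a contradiction argument and then reads off the equalities; both are the same bookkeeping.
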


\begin{proof}
Suppose $s\not\in I_R(x+y+z)$, so that $(x+y+z)s = x+y +z + s'$ for some $s'\in S$. Then we cannot have $xs$, $ys$, $zs$, $(x+y)s$, $(x+z)s$, or $(y+z)s$ in $S$. For example if we had $xs\in S$ then we would obtain $(y+z)s =x+y+z + (s' + xs) $, with the left side in $S_3$ and the right in $S_4\setminus S$, which is absurd. So $I_R(r)\subset I_R(x+y+z)$. 

Now if $s\in I_R(x)\subset I_R(x+y+z)$. Then clearly $s\in I_R(y+z)$, and by symmetry $I_R(x) = I_R(y+z)$. Similarly $I_R(y) = I_R(x+z)$ and $I_R(z) = I_R(x+y)$. 
\end{proof}

\begin{lemma}
The additive groups $S/I_R(r)$ have order 2. 
\end{lemma}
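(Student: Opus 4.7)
The plan is to use left multiplication by $r$ as a group homomorphism. For each $r \in R \setminus S$, define $\phi_r \colon S \to R/S$ by $\phi_r(s) = rs + S$; this is a well-defined additive homomorphism (the observation flagged in the text just before the case analysis), and its kernel is exactly $I_R(r)$, so $S/I_R(r)$ embeds as a subgroup of $R/S \cong C_2^3$. Since $1_R \in S$ and $\phi_r(1_R) = r + S \ne 0$, the quotient $S/I_R(r)$ is nontrivial, giving the lower bound $[S : I_R(r)] \geq 2$.

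For the upper bound, the key observation is that whenever $r \in S_i$ we have $rS \subset S_i$, so the image of $\phi_r$ lies inside $\bigcap_{r \in S_i} S_i/S$. Running through the four coset representatives $x$, $y$, $z$, $x+y+z$, each such intersection is a subgroup of $R/S$ of order exactly $2$: from the explicit description $S_1/S = \{0,x,y,x+y\}$, $S_2/S = \{0,x,z,x+z\}$, $S_3/S = \{0,y,z,y+z\}$, $S_4/S = \{0,x+y+z\}$, one reads off $S_1/S \cap S_2/S = \{0,x\}$, $S_1/S \cap S_3/S = \{0,y\}$, $S_2/S \cap S_3/S = \{0,z\}$, and $|S_4/S|=2$. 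In all four cases this forces $[S:I_R(r)] \leq 2$, hence equality.

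The three remaining coset representatives $x+y$, $x+z$, $y+z$ each lie inside only a single $S_i$, and that $S_i$ has order $4$ over $S$, so the direct intersection argument yields only the weaker bound $[S:I_R(r)] \leq 4$. For these I would invoke the identities $I_R(x+y) = I_R(z)$, $I_R(x+z) = I_R(y)$, $I_R(y+z) = I_R(x)$ established in the preceding lemma, which immediately reduce the remaining three cases to the ones already handled. I do not anticipate a genuine obstacle: the previous lemma has already done the delicate pairing of the seven nontrivial coset representatives into four equivalence classes for $I_R$, and the present step reduces to a size bound coming from $\phi_r$ together with the intersection table for $R/S$. The only subtlety worth flagging is recognizing that a naive one-subring size count does not suffice for $x+y$, $x+z$, $y+z$, so one must lean on the identities from the previous lemma rather than trying to produce the bound for these $r$ afresh.
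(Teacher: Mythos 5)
Your argument is correct and follows the paper's own proof: the lower bound from $1_R \in S \setminus I_R(r)$, the injection $S/I_R(r) \to T/S$ for a suitable subring $T \supset S$ containing $r$ (giving the bound $2$ directly for $x$, $y$, $z$, $x+y+z$ via the intersection table), and the appeal to the preceding lemma's identities $I_R(x+y) = I_R(z)$, $I_R(x+z) = I_R(y)$, $I_R(y+z) = I_R(x)$ for the remaining three cosets. The only difference is that the paper compresses the cases $y$, $z$ into a one-line appeal to symmetry, whereas you spell them out.
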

\begin{proof}
Since $1_R\in S\setminus I_R(r)$, the group $S/I_R(r)$ is nontrivial. If $r=x+y+z$, the map $[s]\mapsto [(x+y+z)s]: S/I_R(x+y+z) \to S_4/S$ is an injection. If $r=x$, the map $s\mapsto xs:S\to R$ has image in $S_1\cap S_2$. Hence it induces an injection $S/I_R(x)\to S_1/S\cap S_2/S$, and so $[S:I_R(x)]\leq 2$. Apply symmetry and the previous lemma. 
\end{proof}

\begin{lemma}
We have $[S:I_R]= [S:I_L]=2$. 
\end{lemma}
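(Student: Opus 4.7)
The strategy is to analyze the additive homomorphisms $\mu_r : S \to R/S$ defined by $s \mapsto rs+S$, for $r \in R\setminus S$. By the previous lemma, the distinct subgroups $I_R(r) = \ker(\mu_r)$ are exhausted by $\ker(\mu_x)$, $\ker(\mu_y)$, $\ker(\mu_z)$, and $\ker(\mu_{x+y+z})$, so I need to show these four subgroups coincide and have index $2$ in $S$.

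First, I would pin down the image of each $\mu_r$ inside $R/S$. Since $x$ lies in both $S_1$ and $S_2$ and $s \in S \subseteq S_1 \cap S_2$, the product $xs$ lies in $S_1\cap S_2$, forcing the image of $\mu_x$ into $(S_1\cap S_2)/S \subseteq (S_1/S)\cap(S_2/S)=\{0,x\}$. Using $y\in S_1\cap S_3$ and $z\in S_2\cap S_3$ gives the analogous containments $\mathrm{im}(\mu_y)\subseteq\{0,y\}$ and $\mathrm{im}(\mu_z)\subseteq\{0,z\}$, while $x+y+z\in S_4$ yields $\mathrm{im}(\mu_{x+y+z})\subseteq\{0,x+y+z\}$. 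Evaluating at $s=1_R$ shows each image is nontrivial, hence equals the ambient two-element subgroup; in particular $[S:\ker(\mu_r)]=2$ for each relevant $r$.

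Next, I would bundle $\mu_x,\mu_y,\mu_z$ into a single additive map $\mu=(\mu_x,\mu_y,\mu_z): S \to \{0,x\}\times\{0,y\}\times\{0,z\}$ and exploit the relation $\mu_{x+y+z}(s)=\mu_x(s)+\mu_y(s)+\mu_z(s)$. The constraint $\mu_{x+y+z}(s)\in\{0,x+y+z\}$, together with the $F_2$-linear independence of $x,y,z$ in $R/S\cong C_2^3$, leaves only the triples $(0,0,0)$ and $(x,y,z)$ in the image of $\mu$: any partial triple such as $(x,0,0)$ or $(x,y,0)$ would force a nonzero element like $x$ or $x+y$ to equal $0$ or $x+y+z$ modulo $S$, which is absurd. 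This immediately gives $\ker(\mu_x)=\ker(\mu_y)=\ker(\mu_z)=\ker(\mu)$, an index-$2$ subgroup of $S$; and since any $s$ in this common kernel also satisfies $\mu_{x+y+z}(s)=0$, it coincides with $\ker(\mu_{x+y+z})$ as well. Therefore $I_R$ is exactly this common kernel, of index $2$ in $S$.

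The left-handed version is symmetric: replace $\mu_r(s)=rs+S$ by $\nu_r(s)=sr+S$; the containments $x\in S_1\cap S_2$, $y\in S_1\cap S_3$, $z\in S_2\cap S_3$, and $x+y+z\in S_4$ still force each $\nu_r$ into the same two-element subgroup of $R/S$, and the identical combinatorial check yields $[S:I_L]=2$. The main obstacle is really the case analysis in the third paragraph: one must verify carefully that no ``partial'' triple in $\{0,x\}\times\{0,y\}\times\{0,z\}$ can arise, which rests essentially on the $F_2$-linear independence of $x,y,z$ in the elementary abelian group $R/S$.
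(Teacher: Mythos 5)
Your proof is correct, but it takes a noticeably different and more self-contained route than the paper's. The paper disposes of the lemma in two lines: the first of the two preceding lemmas gives the inclusion $I_R(r)\subset I_R(x+y+z)$, the second gives $[S:I_R(r)]=2$ for every $r$, and an inclusion of subgroups of equal index forces equality, so all the $I_R(r)$ coincide and $[S:I_R]=2$. You instead re-derive the index-$2$ statement by pinning down the \emph{images} of the maps $\mu_r:s\mapsto rs+S$ inside the pairwise intersections $(S_i\cap S_j)/S$, and then exploit the $F_2$-linear identity $\mu_{x+y+z}=\mu_x+\mu_y+\mu_z$ together with a short case analysis on the bundled map $\mu=(\mu_x,\mu_y,\mu_z)$ to show that $\ker(\mu_x)=\ker(\mu_y)=\ker(\mu_z)=\ker(\mu_{x+y+z})$. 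This is a valid alternative: your image computation essentially reproduces the paper's proof of the order-$2$ lemma, and the bundling argument replaces the containment-plus-index step with a direct structural observation. In fact, your approach can be pushed to be independent of the first preceding lemma as well, since the same additivity of $\mu_r$ in $r$ shows $\ker(\mu_x)\subseteq\ker(\mu_r)$ for every nonzero $r\in R/S$, whence $\ker(\mu_r)=\ker(\mu_x)$ by index considerations. The trade-off is length: the paper's proof is shorter because it leans on the two lemmas just proved, while yours is more explicit and makes the $F_2$-linearity of the family $r\mapsto\mu_r$ do the work.
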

\begin{proof}
We know $I_R(r)\subset I_R(x+y+z)$, which forces $S/I_R(x+y+z)$ to be a quotient of $S/I_R(r)$. But both have order 2, so $I_R(r) = I_R(x+y+z)$, and hence $I_R = I_R(r)$. The case of $I_L$ is symmetric. 
\end{proof}

\begin{lemma}
We have $I_R\cap I_L=0$. 
\end{lemma}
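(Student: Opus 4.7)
The plan is to prove that for any $t \in I_R \cap I_L$ the two-sided ideal $RtR$ is contained in $S$. Since $(R, S_1, S_2, S_3, S_4)$ is a good tuple with $R \in S(4)$, the intersection $S$ contains no nonzero two-sided ideal of $R$, so this inclusion will force $RtR = 0$, and then $t = 1_R \cdot t \cdot 1_R \in RtR$ gives $t = 0$.

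To verify $r_1 t r_2 \in S$ for all $r_1, r_2 \in R$, I would first dispose of the case where one of $r_1, r_2$ lies in $S$: if $r_1 \in S$, then $tr_2 \in S$ (either because $r_2 \in S$ and $S$ is a subring, or because $t \in I_L$ and $r_2 \in R \setminus S$), so $r_1(tr_2) \in S$; the case $r_2 \in S$ is symmetric. For $r_1, r_2 \in R \setminus S$, I would invoke the previous lemmas: the additive homomorphism $s \mapsto \overline{sr}\colon S \to R/S$ has kernel $I_L(r) = I_L$ of index $2$ in $S$, and its image contains $\overline{r}$ via $s = 1_R$, so it is exactly $\{0, \overline{r}\}$; symmetrically, $s \mapsto \overline{rs}$ has image $\{0, \overline{r}\}$. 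Writing $r_1 t r_2 = (r_1 t) r_2$ with $r_1 t \in S$ therefore places its coset in $\{0, \overline{r_2}\}$, and writing it as $r_1(tr_2)$ with $tr_2 \in S$ places its coset in $\{0, \overline{r_1}\}$. So whenever $\overline{r_1} \neq \overline{r_2}$ are distinct and nonzero, the intersection is $\{0\}$ and $r_1 t r_2 \in S$.

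The only obstacle left is the diagonal case $\overline{r_1} = \overline{r_2} = \overline{r}$ nonzero, in which the intersection above is $\{0, \overline{r}\}$ and the coset $\overline{r}$ is not immediately excluded. To handle this I would exploit the fact that $R/S \cong C_2^3$ has seven nonzero cosets, so I can choose some $r' \in R \setminus S$ with $\overline{r'} \neq \overline{r}$. Applying the distinct-coset case to the pair $(r_1, r')$ yields $r_1 t r' \in S$, and since $r_1 t$ already lies in $S$, this says precisely $r_1 t \in I_L(r') = I_L$. Using $I_L = I_L(r_2)$ one more time then gives $(r_1 t) r_2 \in S$, completing the proof. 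This diagonal-case trick is the main subtlety: without invoking a third element with a different coset to promote $r_1 t$ into $I_L$, the coset-intersection argument alone is too weak to rule out the coset $\overline{r}$ for $r_1 t r_2$.
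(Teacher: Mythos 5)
Your proof is correct, and it takes a genuinely different route from the paper's. The paper's argument works directly with the coset representatives $x,y,z$ and the explicit arrangement of the $S_i$: it observes, for instance, that $z\in S_2\cap S_3$ and $x\in S_1\cap S_2$, so that $xtz$ lies in $S_1\cap S_2\cap S_3=S$ (combining the two factorizations $(xt)z$ and $x(tz)$), handles the mixed terms this way, and then extracts $xtx$, $yty$, $ztz$ from products involving $x+y+z\in S_4$. Your proof instead abstracts away from the $S_i$ entirely and uses only the conclusions of the preceding lemmas: $1_R\in S$, $[S:I_R]=[S:I_L]=2$, and $I_R(r)=I_R$, $I_L(r)=I_L$ for every $r\in R\setminus S$. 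Pinning down the image of $s\mapsto\overline{sr}$ as exactly $\{0,\overline{r}\}$ and intersecting the two constraints on $\overline{r_1tr_2}$ kills the off-diagonal cases cleanly, and your observation that $r_1t\in I_L(r')$ for some $r'$ with $\overline{r'}\neq\overline{r_1}$ (hence $r_1t\in I_L=I_L(r_2)$) is exactly the right way to handle the diagonal case, which the coset-intersection argument alone cannot reach. What the paper's approach buys is uniformity with the analogous bound $|I_R\cap I_L|\leq 2$ in the other subcase ($|S_3/S|=2$), where the $I_R(r)$ are not all equal and your abstract hypotheses fail; what your approach buys is modularity, since the conclusion is a purely formal consequence of the previously established lemmas and would apply verbatim to any good tuple satisfying them.
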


\begin{proof}
Let $t\in I_R\cap I_L$. Since $z\in S_2\cap S_3$, and $x\in S_1\cap S_2$, we get $xtz\in S_1\cap S_2\cap S_3=S$. Similarly $ztx, xty, ytx, ytz, zty\in S$. We have $(x+y+z)tr$ and $rt(x+y+z)$ lying in $S$ for $r\in \{x, y, z\}$, since these lie in $S_4\cap S_i = S$ with $1\leq i\leq 3$. Thus $xtx, yty, ztz\in S$. So $RtR\subset S$ which forces $t=0$. 
\end{proof}

\begin{corollary}\label{3rd corollary}
If $(R, S_1, S_2, S_3, S_4)$ is a good tuple with $[S_1:S]=[S_2:S]=[S_3:S]= 4$ and $[S_4:S]=2$, then $R$ has order 16 or 32.  
\end{corollary}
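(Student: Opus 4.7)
The plan is to combine the three preceding lemmas (which give $[S:I_R]=[S:I_L]=2$ and $I_R \cap I_L = 0$) with the fact established at the start of the section that $[R:S]=8$, and then pin down $|S|$ using the two-sided embedding of $S$ into $S/I_R \times S/I_L$.

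First I would recall that by the preceding proposition, $[R:S]=8$, so it suffices to bound $|S|$. The key step is to consider the additive group homomorphism
\[
\varphi: S \longrightarrow S/I_R \oplus S/I_L, \qquad s \longmapsto (s+I_R,\; s+I_L).
\]
Its kernel is precisely $I_R \cap I_L$, which by the last lemma is $0$, so $\varphi$ is injective. Therefore
\[
|S| \;\leq\; [S:I_R]\cdot[S:I_L] \;=\; 2\cdot 2 \;=\; 4.
\]
For a lower bound, since $R$ has characteristic $2$ (Theorem \ref{Theorem 1}) and $1_R \in S$, the subgroup $\{0,1_R\}$ lies in $S$, so $|S|\geq 2$. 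Finally, $|S|$ is a power of $2$, so $|S|\in\{2,4\}$ and $|R| = 8|S| \in \{16,32\}$.

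There is no real obstacle here: the substance of the proof has already been done in the three preceding lemmas, which controlled the indices $[S:I_R]$ and $[S:I_L]$ and showed $I_R\cap I_L=0$. The only thing to verify is the elementary observation that an abelian group whose two subgroups of index $2$ intersect trivially has order at most $4$, together with the reminder that $1_R \in S$ supplies the lower bound $|S|\geq 2$.
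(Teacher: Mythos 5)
Your proof is correct and follows essentially the same route as the paper: the paper writes $2\leq |S|=[S:I_R\cap I_L]\leq [S:I_R][S:I_L]=4$ directly (the inequality being the standard index bound you realize via the embedding $S\hookrightarrow S/I_R\oplus S/I_L$), combines it with $[R:S]=8$, and concludes. Making the injection explicit and noting $|S|$ is a power of $2$ are just slightly more verbose phrasings of the same argument.
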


\begin{proof}
We have $2\leq |S| = [S:I_R\cap S_L]\leq [S:I_R][S:I_L]=4$ and $[R:S]=8$. 
\end{proof}





\subsection{Rings in $S(4)$ of order 16, 32, 64, 128, and 256} 

Now we will proceed to complete the proof of Theorem \ref{Theorem 2}. The next proposition shows that $S(4)$ contains no rings of order 16 besides $F_4\times F_4$ and $M_2(F_2)$, and that $S(4)$ contains no rings of order 32. Note that noncommutative rings are coverable.

 \begin{prop} \label{noncommlema order 16} \label{lemma 16/32}  \label{noncommlema order 32}
 Let $R$ be a ring with $\sigma(R)> 3$. 
 
 a) If $R$ is commutative, coverable, and of order $16$ then $R\cong F_4\times F_4$ and $\sigma(R)=4$.
 
 b) If $R$ is commutative, coverable, and of order $32$ then $R\cong F_4\times F_4\times F_2$ and $\sigma(R)=4$. In particular, $R\not\in S(4)$. 
 
 c) If $R$ is noncommutative of order 16 then either $R=M_2(F_2)$ and $\sigma(R)=4$, or $R$ is the unique ring of order 16 with $\sigma(R)=5$.

 d) If $R$ is noncommutative of order $32$ then either $R= F_2\times M_2(F_2)$ and $\sigma(R)=4$, or $R$ is one of three rings with $\sigma(R)=5$. In particular, $R\not\in S(4)$.

 \end{prop}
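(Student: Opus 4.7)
\emph{Overall strategy.} In each of the four parts we enumerate unital rings of the given order with $\sigma(R) > 3$ using three principal tools developed earlier in the paper: Lemma~\ref{F2 times R lemma} (computing $\sigma$ of a product of rings with no common nonzero quotient), Proposition~\ref{resfield p prop} (covering behavior of finite local rings with residue field $F_p$), and Theorem~\ref{Theorem 1} (a unital ring has $\sigma=3$ iff it has a quotient in $S(3)=\{F_2\times F_2,\ F_2[x,y]/(x^2,xy,y^2)\}$). For the noncommutative parts we additionally invoke the classification of noncommutative unital rings of order $p^4$ from \cite{Derr94}, together with a parallel structural analysis at order $2^5$.

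\emph{Commutative cases (a), (b).} A finite commutative ring decomposes as a product $R=L_1\times\cdots\times L_k$ of local rings with residue fields $F_{2^{e_i}}$. If some $L_i$ with $e_i=1$ is coverable, then by Proposition~\ref{resfield p prop} the ring $R$ surjects onto $F_2[x,y]/(x^2,xy,y^2)$ and hence $\sigma(R)=3$; and if two distinct indices satisfy $e_i=e_j=1$, then $R$ surjects onto $F_2\times F_2$ and again $\sigma(R)=3$. Both possibilities are excluded by the hypothesis $\sigma(R)>3$. A direct order-by-order examination of what remains leaves, for order $16$, only a single local ring with residue field $F_{2^e}$ for $e\ge 2$ (in each such case the argument of Proposition~\ref{resfield p prop} adapts to show the ring is generated by a single element and is therefore not coverable) or the decomposition $R=F_4\times F_4$, which has $\sigma=4$ by Example~\ref{p=2 comm}. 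For order $32$ the same analysis leaves only $R=F_4\times F_4\times F_2$: by Lemma~\ref{F2 times R lemma} (the two factors $F_4\times F_4$ and $F_2$ have no common nonzero quotient) we get $\sigma(R)=\min(4,\infty)=4$, and the surjection $R\to F_4\times F_4\in S(4)$ shows $R\notin S(4)$.

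\emph{Noncommutative cases (c), (d).} We walk through the classification of noncommutative unital rings of the relevant order. Any ring with a quotient in $S(3)$ is discarded by Theorem~\ref{Theorem 1}; in particular, every ring whose semisimple quotient $R/J(R)$ has more than one simple factor surjects onto $T_2(F_2)$ and thence to $F_2\times F_2\in S(3)$. In the order-$16$ case this leaves only $M_2(F_2)$ (with $\sigma=4$ by Example~\ref{p=2 Noncomm}) and the ``Frobenius-twisted'' ring $R_5:=\{\left(\begin{smallmatrix}a & b\\ 0 & a^2\end{smallmatrix}\right):a,b\in F_4\}$. For $R_5$ we enumerate maximal subrings exactly as in the $R/J\cong F_9$ case of Proposition~\ref{prop none of 81}: there is a unique maximal subring of index $2$ (the preimage of the maximal ideal, where $a\in F_2$), together with several maximal subrings of index $4$ containing $1_R$ and pairwise intersecting in $\{0,1_R\}$; counting the $8$ elements of $R_5$ outside the index-$2$ subring against the $2$ new elements contributed by each index-$4$ subring yields $\sigma(R_5)=5$. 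For order $32$ the parallel analysis produces $F_2\times M_2(F_2)$ (with $\sigma=4$ by Lemma~\ref{F2 times R lemma} and the surjection to $M_2(F_2)\in S(4)$ showing $R\notin S(4)$) together with three noncommutative rings of $\sigma=5$ (one of which is $F_2\times R_5$), each handled by the same kind of direct subring enumeration.

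\emph{Main obstacle.} The most demanding step will be the explicit enumeration of all maximal subrings of the handful of noncommutative rings that escape the general criteria, and the combinatorial verification that no three of them form a cover. The template for this is the order-$81$ analysis of Proposition~\ref{prop none of 81}: isolate maximal subrings not containing $1_R$ (these correspond to maximal two-sided ideals), bound the remaining subrings by their index, and finish with a counting argument.
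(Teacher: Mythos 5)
Your strategy matches the paper's: decompose commutative rings into products of local rings, use Proposition~\ref{resfield p prop} for residue field $F_2$ and single-generator arguments for larger residue fields, and Lemma~\ref{F2 times R lemma} for products; for noncommutative rings, sort by radical and $R/J$ and reduce to the classification from \cite{Derr94} and \cite{Cor I}, then hand-enumerate maximal subrings in the small number of surviving cases. Your count for the Frobenius-twisted ring $R_5$ of order $16$ agrees with the paper's order-$81$ template and yields $\sigma=5$.

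Two points to watch. First, your blanket claim that any noncommutative $R$ whose $R/J$ has more than one simple factor ``surjects onto $T_2(F_2)$'' is false as stated (and not needed); the correct and sufficient observation is that, for the orders under consideration, the classification forces at least two $F_2$ factors in $R/J$, so $R$ surjects directly onto $F_2\times F_2\in S(3)$. You should also explicitly rule out $R/J\cong F_2\times F_4$ at order $16$ (and its analogues), which does not surject onto $F_2\times F_2$; the paper does so by citing \cite{Derr94}, and it can also be seen from the bimodule structure since $M_{ij}$ is a vector space over the residue field $F_4$, so $|M_{ij}|$ is a power of $4$, incompatible with $|J|=2$. Second, part d) needs more care than ``the same kind of direct subring enumeration'': the paper treats the two opposite indecomposable nonlocal rings of order $32$ with a qualitatively different argument, isolating four index-$4$ subrings each generated by a single element of multiplicative order $3$ (hence forced into any cover, since no index-$2$ subring can contain such an element), and then supplementing with one index-$2$ subring and a counting argument outside it. This is the most delicate case and shouldn't be compressed into a parenthetical.
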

 
 \begin{proof}
 a) Suppose first that $R$ is local. If $R$ has residue field $F_{16}$ then $R=F_{16}$ and is not coverable. If $R$ had residue field $F_8$ then $|R|$ would be a power of 8. By Proposition \ref{resfield p prop}, if $R$ has residue field $F_2$ then $\sigma(R)\in \{3, \infty\}$.  Last, suppose $R$ has residue field $F_4$. If $R$ has characteristic 2, then $R\cong F_4[x]/(x^2)$, which is not coverable since it is generated by $b=a+x$ for any $a\in F_4\setminus F_2$: we have $b^4 = a$ and $b- b^4=x$. If $R$ has charactersitic 4, then $\sigma(R)=\sigma(R/2R)$ so $R$ has the same covering number as a local ring of smaller order with residue field $F_4$. The only way this can happen is $R/2R = F_4$, so $R$ is not coverable.

  Now suppose that $R$ is a product of smaller local rings. If there are more than two factors, then $R = T\times F_2\times F_2$ for some ring $T$ of order 4, so $R$ surjects onto $F_2\times F_2$ and $\sigma(R)=3$. Thus $R=R_1\times R_2$, with $R_1$ and $R_2$ local. Since quotients of local rings are local, either $R_1$ and $R_2$ have the same residue field or they have no common quotients. In the latter case, Lemmas \ref{F2 times R lemma}, \ref{$p^2$ coverables}, and \ref{$p^3$ coverables} imply that $\sigma(R_1\times R_2) = \min (\sigma(R_1), \sigma(R_2  ) ) \in \{3, \infty \}$.  If $R_1$ and $R_2$ both have residue field $F_2$ then $R$ surjects onto $F_2\times F_2$ and $\sigma(R)=3$. If both have residue field $F_4$ then $R=F_4\times F_4$, and we showed in example \ref{p=2 comm} that $R$ is coverable with $\sigma(R)=4$.

  b) Suppose $R$ is local. If $R$ has residue field $F_{32}$ then $R=F_{32}$ and is not coverable. Since 32 is not a power of 16, 8, or 4, $R$ does not have residue field $F_{16}$, $F_{8}$ or $F_4$.  By Proposition \ref{resfield p prop}, if $R$ has residue field $F_2$ then either $R$ is not coverable or $\sigma(R)=3$.

  Now let $R$ be a product of local rings, $R=\prod\limits_{i=1}^t R_i$, with $2\leq t\leq 5$. If $t\geq3$, then either two (or more) factors have residue field $F_2$, so $\sigma(R)=3$, or else $R\cong F_4\times F_4\times F_2$, which has covering number 4 by Lemma \ref{F2 times R lemma}. This just leaves the case $R=R_1\times R_2$ with $R_1$ and $R_2$ local. Suppose $|R_1|=16$, so $R_2 = F_2$. If $R_1$ has residue field $F_2$, $R$ surjects onto $F_2\times F_2$ and $\sigma(R)=3$. Otherwise $R_1=F_{16}$ or $R_1$ has residue field $F_4$. We already showed that none of these are coverable, so by Lemma \ref{F2 times R lemma}, $R_1\times F_2$ is not coverable. Finally, suppose $|R_1|=8$ and $|R_2|=4$. If they both have residue field $F_2$ then $R$ surjects onto $F_2\times F_2$ and $\sigma(R)=3$. Otherwise they must have different residue fields, hence no common quotients, and by Lemmas \ref{F2 times R lemma}, \ref{$p^2$ coverables}, and \ref{$p^3$ coverables} we have $\sigma(R_1\times R_2) = \min (\sigma(R_1), \sigma(R_2  ) ) \in \{ 3, \infty  \}$.

 c) We consider $R$ in cases according to the size of its radical $J=J(R)$. Since $R$ has a unit, $|J| \in \{0, 2, 4, 8  \}$. If $J = 0$ then $R$ is a semisimple ring, so by order considerations and noncommutativity $R=M_2(F_2)$. We showed that $\sigma(M_2(F_2) ) =4$ in example \ref{p=2 Noncomm}. Suppose $|J|=2$. If $R/J = F_2^3$, then $R$ surjects onto $F_2\times F_2$, giving $\sigma(R)=3$. We cannot have $R/J=F_8$ or $R/J = F_2\times F_4$; see \cite{Derr94}. Suppose $|J|=4$. If $R/J=F_2\times F_2$, then $\sigma(R)=3$. The only other possibility is $R/J = F_4$. By \cite{Derr94} or \cite{Cor I}, there is a unique such $R$, isomorphic to the ring of matrices of the form $$\left\{  \begin{bmatrix}
 a & b \\  0 & a^2
 \end{bmatrix}\right\}  $$ where $a, b \in F_4$. The proof that $\sigma(R)=5$ is completely analogous to the proof in Proposition \ref{prop none of 81} that the corresponding ring of order 81 had covering number 10, so we omit the details. 
 Finally, if $|J|=8$ then $\sigma(R)=3$ by Proposition \ref{noncomm resfield $p$ prop}. 
 
 d) Suppose $R$ is local. Since $R\neq F_{32}$, the residue field of $R$ is $F_2$. Then $\sigma(R)=3$ by Proposition \ref{noncomm resfield $p$ prop}. 
 
 If $R$ is decomposable, then at least one of its summands is noncommutative. Unital rings of order less than $8$ are commutative. The only noncommutative ring of order 8 is $T_2(F_2)$, which has covering number 3, and so $\sigma(R)=3$ if $T_2(F_2)$ is a summand of $R$. The remaining case is $R=F_2\times R'$ for some noncommutative ring of order $16$. If $\sigma(R')=3$ then $\sigma(R)=3$. If $\sigma(R')=4$ then $R'=M_2(F_2)$ by part b).  By part b) again, the only remaining possibility is that $R'$ is the unique noncommutative ring of order 16 and covering number 5. Since $R'$ does not surject onto $F_2$, Lemma \ref{F2 times R lemma} implies $\sigma(F_2\times R') = \sigma(R')=5$.

 Finally, suppose that $R$ is indecomposable and nonlocal. From the classification in \cite{Cor I}, there are exactly 2 nonlocal, indecomposable rings of order $32$ that do not surject onto $F_2\times F_2$; they are opposite rings of one another, so they have the same covering number.  
  Concretely, we may realize one of them as the ring of matrices $$\left\{ \begin{bmatrix}
   a & b \\0 & c
   \end{bmatrix} \right\}$$ where $a\in F_2$, $b, c\in F_4$. 
  The radical $J$ of $R$ consists of elements with $a=c=0$. If $x\in R$ is a matrix with $a=1$, arbitrary $b$, and $c\not\in F_2$, and $0\neq y\in J$, then $x$ and $y$ generate $R$. Thus no proper subring of index $2$ can contain such an element $x$, since any subring of index 2 intersects $J$ nontrivially. Since $x^3 = 1_R$, the subring $S_x$ generated by $x$ consists of elements of the form $i_1 1_R + i_2x + i_3 x^2$, where $i_j\in F_2$. It is easy to check that these are 8 distinct elements in $R$, so $[R:S_x]=4$, and $S_x$ is therefore a maximal subring generated by a single element. Thus $S_x$ must be included in any cover of $R$. Since no element of $S_x$ besides $x$ and $x^2$ have $a=1$ and $c\not\in F_2$, we see that $S_x = S_{x'}$ if and only if $x'\in  \{x, x^2\}$. There are 8 elements with $a=1$ and $c\not\in F_2$, so there are 4 distinct subrings of the form $S_x$.  The four $S_x$ cannot cover $R$, being of index 4 and having nonempty intersection, so $\sigma(R)>4$. Let $T$ be the index-2 subring consisting of matrices with $c\in F_2$. One sees that $S_x\setminus (T\cup S_{x'}) = \{ x, x^2, 1_R + x, 1_R+x^2\}$ for any $x'\neq x, x^2$. Since $|R\setminus T|=16$ and there are four elements in each of the four $S_x$ that lie in no other of our maximal proper subrings, we have $R = T\cup (\bigcup S_x)$ and $\sigma(R)=5$. 
 \end{proof}

From Proposition \ref{noncommlema order 16}, Corollary \ref{2nd Corollary}, and Corollary \ref{3rd corollary}, we now know that if $(R, S_1, \ldots, S_4)$ is a good tuple and $R$ is not $F_4\times F_4$ or $M_2(F_2)$, then $R$ is noncommutative, $|R|\in \{64, 128, 256\}$, $[R:S_1]=[R:S_2]=2$, $[R:S_3]=[R:S_4]=4$, $[R:S]=8$, and $1_R\in S$. We also saw that $S = S_i\cap S_j$ if $i\neq j$ except that $[S_1\cap S_2:S]=2$. In particular, $S_i\cap S_j\cap S_k = S$ for any three distinct indices $i,j,k$. 

{\it For the remainder of this section, $R$ will denote a hypothetical ring in $S(4)$ satisfying these conditions.} We will show that no such ring exists, thereby completing the proof of Theorem \ref{Theorem 2}. 

\begin{prop}
$R$ is indecomposable. 
\end{prop}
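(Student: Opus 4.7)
The plan is to suppose for contradiction that $R = R_1 \times R_2$ with both factors nonzero. Each $R_i$ is then a proper quotient of $R$, so the defining property of $S(4)$ forces $\sigma(R_i) > 4$ for $i = 1, 2$. By the contrapositive of Lemma \ref{F2 times R lemma}, $R_1$ and $R_2$ must share a common nonzero quotient (otherwise $\sigma(R) = \min(\sigma(R_1), \sigma(R_2)) > 4$, contradicting $\sigma(R) = 4$). Passing to a simple quotient, we obtain a common simple quotient $Q$, which by Artin--Wedderburn satisfies $Q \cong M_n(F_{2^k})$ for some $n, k \geq 1$.

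Next I apply size bounds. Since both $R_i$ surject onto $Q$, we have $|Q| \leq \min(|R_1|, |R_2|) \leq \sqrt{|R|} \leq 16$, leaving $Q \in \{F_2, F_4, F_8, F_{16}, M_2(F_2)\}$. Moreover $Q$ is itself a proper quotient of $R$ (since $|Q| \leq |R|/2 < |R|$), so $\sigma(Q) > 4$. This rules out $Q = M_2(F_2)$, which has $\sigma = 4$ by Example \ref{p=2 Noncomm}. Hence $Q$ is a finite field $F_{2^k}$ with $k \in \{1, 2, 3, 4\}$.

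I then case on $k$ using the surjection $R \twoheadrightarrow Q \times Q$. For $k = 1$, $R$ has the proper quotient $F_2 \times F_2 \in S(3)$, contradicting $R \in S(4)$. For $k = 2$, $R$ has the proper quotient $F_4 \times F_4 \in S(4)$ (proper since $|R| \geq 64 > 16$), again incompatible with $R \in S(4)$. For $k = 4$, the bound $|R| \geq |Q \times Q| = 256$ combined with $|R| \leq 256$ forces $|R| = 256$ and hence $R \cong F_{16} \times F_{16}$, which is commutative, contrary to the noncommutativity of $R$.

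The most delicate case is $k = 3$, where $|R_i| \in \{8, 16, 32\}$, each $R_i$ surjects onto $F_8$, and at least one $R_i$ must be noncommutative. Using Propositions \ref{noncommlema order 16} and \ref{noncommlema order 32}, every noncommutative unital ring of order at most $32$ has residue field $F_2$ or $F_4$, so cannot surject onto $F_8$. Likewise, no local commutative ring of order $16$ or $32$ has residue field $F_8$, since Nakayama would require $|J/J^2|$ to be a positive power of $8$ whereas $|J/J^2| \leq |J| \leq 4$; the possibilities for $R_i$ therefore reduce to commutative rings such as $F_8$, $F_8 \times F_2$, or $F_8 \times T$ with $T$ a commutative ring of order $4$. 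Consequently both $R_i$ are commutative, making $R$ commutative --- a final contradiction. The principal obstacle is this last enumeration, which is made tractable by the earlier classification results.
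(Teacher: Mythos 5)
Your proposal is correct but takes a genuinely different route from the paper. Both start from the same observation---the contrapositive of Lemma \ref{F2 times R lemma} forces the two factors in a decomposition $R = R_1 \times R_2$ to share a nonzero quotient---but then diverge. The paper proceeds by iteratively bounding the order of a noncommutative indecomposable summand: summands of order 2 or 4 are ruled out via their minimal quotients, forcing a noncommutative indecomposable summand first of order $\geq 16$, then $\geq 32$; this already eliminates $|R| \in \{64,128\}$, and $|R|=256$ is finished by inspecting the forced $8\times 32$ decomposition. You instead pass immediately to a common \emph{simple} quotient $Q \cong M_n(F_{2^k})$, use $\sigma(Q) > 4$ (since $Q$ is a proper quotient of $R \in S(4)$) to eliminate $M_2(F_2)$, and dispose of each field $F_{2^k}$ directly. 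The cases $k \in \{1,2,4\}$ are crisp. For $k=3$ you need that no noncommutative ring of order at most 32 with $\sigma > 4$ surjects onto $F_8$; your phrase ``has residue field $F_2$ or $F_4$'' is a little loose for nonlocal rings, and Propositions \ref{noncommlema order 16}/\ref{noncommlema order 32} do not literally assert a residue-field fact, but the needed conclusion does follow from their classifications (the $\sigma=5$ rings of orders 16 and 32 have semisimple quotients $F_4$ or $F_2\times F_4$, and $M_2(F_2)$, $F_2\times M_2(F_2)$ clearly admit no $F_8$ quotient; order 8 is just $F_8$). The payoff of your organization is that all of $|R| \in \{64,128,256\}$ are handled uniformly rather than by the paper's bootstrapping of summand sizes, at the cost of having to verify the non-surjection onto $F_8$ across three orders rather than only for the order-32 indecomposables as in the paper.
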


\begin{proof}
Suppose that $R$ is decomposable. If $R = R_1\times R_2$ where $R_1$ and $R_2$ have no common quotients. By Lemma \ref{F2 times R lemma}, we then have $\sigma(R) = \min ( \sigma(R_1), \sigma(R_2 ) )$. So $R\not\in S(4)$ because $R$ has a quotient with the same covering number, contradicting the definition of $S(n)$. In any decomposition of $R\in S(4)$ into a sum of indecomposables, there must then be pairwise common quotients. 

If $R = R_1\times R_2$ where $R_1$ is of order 2 or 4, then the minimal quotient of $R_1$ is either $F_2$ or $F_4$, so $R_2$ will have to surject onto $F_2$ or $F_4$ for there to be a common quotient. In the former case $R$ surjects onto $F_2\times F_2$ and $\sigma(R)=3$, while in the latter case $R$ surjects onto $F_4\times F_4$ so $R\not\in S(4)$. Summands of $R$ therefore have order at least 8. 

Clearly $R$ must have a noncommutative indecomposable summand. Since $T_2(F_2)$ is the only noncommutative ring of order 8 and $\sigma(T_2(F_2))=3$, $R$ has a noncommutative summand of order at least 16. Write $R'$ for the unique ring of order 16 and covering number 5. Suppose $R = T\times R'$, where $T$ is any ring of order $|R|/16$. Since $F_4$ is the only proper quotient of $R'$, $F_4$ must be a quotient of $T$. But then $R$ surjects onto $F_4\times F_4$, so $R\not\in S(4)$. By Proposition \ref{noncommlema order 16} no noncommutative ring of order 16 other than $R'$ can be a summand of $R$. Thus $R$ must have a noncommutative indecomposable summand of order at least 32.

These observations rule out $|R|=64$ and $|R|=128$. Let $R = R_1\times R_2$ where $|R_1|=8$, $|R_2|=32$, and $R_2$ is noncommutative and indecomposable. If $R_1$ and $R_2$ have a common quotient of $F_2$ or $F_4$, then we can argue as before to deduce $R\not\in S(4)$. The only other apparent possibility would be a common quotient of $F_8$. However, this would contradict $R_2$ being noncommutative and indecomposable; see the classification in \cite{Cor I}, for example. 
\end{proof}

\begin{lemma} \label{S3cap I = Scap I lemma}
 $R$ contains no 2-sided ideals $I$ with $0\neq S\cap I = S_3\cap I = S_4\cap I $.
\end{lemma}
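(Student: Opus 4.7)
Suppose for contradiction that such an ideal $I$ exists, and set $J := S \cap I$ and $K := (I+S)/S$, an additive subgroup of $R/S \cong C_2^3$. The hypothesis $I \cap S_3 = J$ prevents any element of $I$ from lying in the nontrivial coset $y+z+S$ of $S_3/S$, so $y+z \notin K$; symmetrically $x+y+z \notin K$. Also $K \neq 0$, since $K = 0$ would force $I \subset S$, producing a nonzero 2-sided ideal inside $S$ and contradicting the good-tuple property established earlier. These constraints leave seven candidates for $K$: the order-2 subgroups $\{0,x\}$, $\{0,y\}$, $\{0,z\}$, $\{0,x+y\}$, $\{0,x+z\}$, or the order-4 subgroups $S_1/S$, $S_2/S$.

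The plan is to invoke $R \in S(4)$ in the form $\sigma(R/I) > 4$, and to contradict it by producing a cover of $R/I$ by at most four proper subrings. The image $\pi(S_i) \subset R/I$ of the projection $\pi\colon R \to R/I$ is a proper subring precisely when $K + S_i/S \neq R/S$; equivalently, when $I + S_i \neq R$.

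For each of the five order-2 choices of $K$ one checks that at least three (and when $K = \{0,x\}$, all four) of the $\pi(S_i)$ are proper: indeed $\{0,x\} \subseteq S_j/S$ for $j = 1,2$, while $|\{0,x\} + S_j/S| = 4$ for $j = 3,4$, covering all four cases. For the other four order-2 choices, exactly one of $\pi(S_1), \pi(S_2)$ becomes all of $R/I$ and three images remain proper. Here I would parameterize the cosets of $\pi(S)$ in $R/I$ by $(R/S)/K \cong C_2^2$ and track the identifications forced by $K$: for instance, when $K = \{0,y\}$ one has $\pi(y+z+S) = \pi(z+S)$ and $\pi(x+y+z+S) = \pi(x+z+S)$, so $\pi(S_1) = \pi(S) \cup \pi(x+S)$, $\pi(S_3) = \pi(S) \cup \pi(z+S)$, $\pi(S_4) = \pi(S) \cup \pi(x+z+S)$, whose union is $R/I$. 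In every order-2 case this yields $\sigma(R/I) \leq 4$, contradicting $\sigma(R/I) > 4$.

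The two remaining cases $K = S_1/S$ or $K = S_2/S$ are more subtle, because then only one $\pi(S_i)$ stays proper and the direct cover argument fails. Here the plan is to refine the ideal: pick $0 \neq t \in J$ and consider the smaller 2-sided ideal $(t) := RtR \subset I$. Since $(t) \cap S_3 \subset I \cap S_3 = J \subset S$ and similarly for $S_4$, this ideal still satisfies the hypothesis, and its image $K_0 := ((t)+S)/S$ is an additive subgroup of $K$. Using the identifications $I_R(x) = I_R(y+z) = I_R(x+y+z)$ and the corresponding $I_L$-equalities from the paper's earlier lemmas, combined with the hypothesis-derived containment $J \subset I_R(y+z) \cap I_L(y+z) = I_R(x) \cap I_L(x)$, one controls the residues $rtr' \bmod S$ for many $r, r' \in R$; I expect that for a suitable choice of $t$, the subgroup $K_0$ has order at most 2, reducing to an order-2 case handled above. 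The main obstacle will be confirming that this reduction succeeds; if it fails for every $t \in J$, a finer bookkeeping of the products $rtr'$ with $r, r' \in R \setminus S_1$ will be required to exhibit an element of $I$ lying in the forbidden coset $y+z+S$ or $x+y+z+S$, directly contradicting the hypothesis.
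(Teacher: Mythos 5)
Your approach differs from the paper's and, as written, has a genuine gap that you yourself flag: the two cases $K = S_1/S$ and $K = S_2/S$ (equivalently $I+S = S_1$ or $I+S = S_2$) are not ruled out. The reduction you sketch --- replacing $I$ by $(t) = RtR$ for $0\neq t\in S\cap I$ --- does preserve the hypotheses of the lemma, but you give no argument that the new subgroup $K_0 = ((t)+S)/S$ has order at most $2$; it is entirely consistent with what you've shown that $(t)+S = S_1$ for every choice of $t$, in which case the reduction loops. The subsequent suggestion of ``finer bookkeeping'' to produce an element of $I$ in the coset $y+z+S$ is not carried out, so the proof does not close. (Your treatment of the five order-$2$ subgroups $K$ is correct: in each such case three of the four images $\pi(S_i)\subset R/I$ are proper subrings whose union is $R/I$, giving $\sigma(R/I)\le 4$ and contradicting $R\in S(4)$.)

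The paper's argument is considerably shorter and avoids the case split entirely. The key observation is that $S\cap I = S_3\cap I$ is automatically a two-sided ideal of $S_3$ (it is the intersection of a subring with a two-sided ideal of $R$), and likewise of $S_4$; hence it is a two-sided ideal of the subring generated by $S_3\cup S_4$. Since $S_3$ and $S_4$ are distinct maximal proper subrings, that subring is all of $R$. So $S\cap I$ is a nonzero two-sided ideal of $R$ contained in $S$, contradicting the good-tuple property directly (or equivalently, $\sigma(R/(S\cap I))=\sigma(R)=4$, contradicting $R\in S(4)$). This structural argument sidesteps the enumeration of subgroups of $R/S$ and in particular the troublesome order-$4$ subgroups. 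If you want to salvage your approach, the place to concentrate is precisely on showing that $K$ cannot equal $S_1/S$ or $S_2/S$, but notice the paper's observation already does that work for free: once $S\cap I$ is an ideal of $R$, it must be $0$, regardless of $K$.
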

\begin{proof}
If $I$ is such an ideal in $R$ then $S_i\cap I$ is an ideal in $S_i$. Therefore $S\cap I$ is an ideal in both $S_3$ and in $S_4$, hence is an ideal in the ring generated by $S_3$ and $S_4$. Maximality of the $S_i$ then implies that $S\cap I$ is an ideal of $R$, contained in $S$. So $\sigma(R) = \sigma(R/(S\cap I))$, contradicting $R\in S(4)$. 
\end{proof}

\begin{prop}
$R$ is not a local ring. 
\end{prop}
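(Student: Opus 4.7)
Suppose for contradiction that $R$ is local, with maximal ideal $J$ and residue field $F_q = R/J$ where $q = 2^k$. The case $k=1$ is immediate from Proposition~\ref{noncomm resfield $p$ prop}: since $R$ is noncommutative, that result gives $\sigma(R) = 3$, incompatible with $R \in S(4)$. So I may assume $k \geq 2$, and the plan is to exploit the fact that $[R:S_1] = [R:S_2] = 2$ with $1_R \in S_1 \cap S_2$ and $S_1 \neq S_2$, by showing that $R$ simply does not admit two distinct unital subrings of index $2$.

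The structural input is the following size lemma: \emph{for any unital subring $S \subseteq R$ with $\pi(S) = F_{q'}$, necessarily a subfield of $F_q$ with $q' = 2^{k'}$ and $k' \mid k$, the order $|S|$ is a power of $q'$.} To see this, filter $S$ by powers of the nilpotent ideal $S \cap J$; each successive quotient is an $F_{q'}$-bimodule, and since $F_2$ is perfect we have $F_{q'} \otimes_{F_2} F_{q'} \cong F_{q'}^{k'}$ as $F_2$-algebras, so every $F_{q'}$-bimodule has order a power of $q'$. Applied to $R$ itself this yields $|R| = q^t$ for some $t \geq 1$.

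Specialize to $S_1$. Its order $|S_1| = |R|/2 = 2^{kt-1}$ is a power of $2^{k'}$, so $k' \mid (kt-1)$. Since $\gcd(k,\,kt-1) = 1$, this forces $k' = 1$, hence $\pi(S_1) = F_2$ and $S_1 \subseteq \pi^{-1}(F_2) = F_2 + J$. But $|F_2 + J| = 2|J| = 2^{k(t-1)+1}$, giving $[\pi^{-1}(F_2) : S_1] = 2^{2-k}$. For $k \geq 3$ this index is less than $1$, absurd; for $k = 2$ it equals $1$, so $S_1 = F_2 + J$ is the \emph{unique} unital subring of index $2$. Either subcase rules out a second distinct index-$2$ unital subring $S_2$, yielding the contradiction. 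The main technical step is the bimodule-size lemma; once that is in hand the remaining argument reduces to the single divisibility relation $k' \mid (kt-1)$.
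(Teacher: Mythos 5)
Your proof is correct, and it takes a genuinely different and more uniform route than the paper's. The paper rules out $|R|=128$ quickly (since $|R|$ must be a power of $|R/J|$), then splits into four cases according to $|R|\in\{64,256\}$ and $R/J\in\{F_4,F_8,F_{16}\}$, in each case tracking the possible orders of $S\cap J$ (where $S = \bigcap_i S_i$) and reaching a contradiction via a counting or Nakayama-type argument, finishing two subcases with Lemma~\ref{S3cap I = Scap I lemma}. You instead work with a single index-$2$ unital subring $S_1$ and prove a clean structural fact: for any unital subring $T$ of a finite local ring of characteristic $2$ with nilpotent radical, filtering $T$ by powers of $T\cap J$ exhibits $|T|$ as a power of $|\pi(T)|$, because the successive quotients are bimodules over $\pi(T)=F_{q'}$ and $F_{q'}\otimes_{F_2}F_{q'}\cong F_{q'}^{k'}$ (separability of finite fields). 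Feeding $|S_1|=2^{kt-1}$ into $k'\mid\gcd(k,kt-1)=1$ forces $\pi(S_1)=F_2$, so $S_1\subseteq\pi^{-1}(F_2)=F_2\cdot 1_R+J$ of order $2^{k(t-1)+1}$, which is too small for $k\ge 3$ and pins $S_1=S_2=\pi^{-1}(F_2)$ for $k=2$; either way $S_1\ne S_2$ fails. This eliminates all three orders $64,128,256$ at once (the size lemma applied to $R$ itself gives $kt=7$ only when $k\in\{1,7\}$, and both are trivially excluded). What the paper's approach buys is that it relies only on elementary counting and the already-established Lemma~\ref{S3cap I = Scap I lemma}; what yours buys is a single argument with no case split, at the price of the bimodule/separability observation, which is an extra piece of algebra but standard. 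Both are sound.
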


\begin{proof}
By Proposition \ref{noncomm resfield $p$ prop}, $R$ cannot have residue field $F_2$. Thus $|R|\neq 2^7$ since $R\neq F_{128}$. Let $J=J(R)$ be the radical of $R$, and $f:R\to R/J$ the quotient map. 

{\bf Case 1a:} $|R|=64$ and $R/J = F_8$. The relevant subrings have orders $|S_1|=|S_2|=32$,  $|S_3|=|S_4|=16$, and $|S|=8$. Since $1_R\in S\setminus J$, we have $|S\cap J|\in \{0, 2, 4\}$.  

If $S\cap J = 0$ then $f|_S:S\to R/J$ is injective, hence surjective. If $0\neq j\in J$ then $\{sj\}_{s\in S} = J$, since $J=J/J^2$ is a 1-dimensional vector space over $F_8$. The subring generated by $S$ and $j$ then contains $J$, and $R= S+J$. But this means $j\not\in S_i$ for any $i$, contradicting the definition of a covering. 

If $S\cap J$ had order 2 then $f(S)=S/S\cap J$ would have order 4 and be a subring of $R/J = F_8$. This is impossible. 

Suppose $S\cap J$ has order 4. The maps $f|_{S_i}:S_i\to R/J$ contain $S\cap J$ in their kernels. So $f(S_3)$ and $f(S_4)$ have order at most 4, hence at most 2 since $F_8$ contains no subring of order 4. But $f^{-1}( F_2)$ is then a proper subring of $R$ containing the maximal subrings $S_3$ and $S_4$, a contradiction.

{\bf Case 1b:} $|R|=64$ and $R/J = F_4$. The sizes of the subrings $S_i$ and $S$ are as before. Since $1_R\in S\setminus J$, and $S\cap J $ is the kernel of the map $f|_S:S\to R/J=F_4$, we have $|S\cap J|\in \{2, 4\}$.

If  $S\cap J$ has order 2 then $f|_S:S\to R/J=F_4$ is onto. Since $J^3 =0 $ we have $$|S\cap J| = \prod\limits_{i=1}^2|(S\cap J^i)/(S\cap J^{i+1})|.$$ Observe that $(S\cap J^i)/(S\cap J^{i+1})$ is an $F_4$-subspace of $J^i/J^{i+1}$ since it is stable under multiplication by $S/(S\cap J) = F_4$. So $|(S\cap J^i)/(S\cap J^{i+1})|$ is a power of $4$, but their product equals $|S\cap J| =2$. 

If $S\cap J$ has order 4, then $f(S)=S/(S\cap J)= F_2$. The preimage $f^{-1}(F_2)$ is a subring of index 2, so it cannot contain the maximal subrings $S_3$ or $S_4$, which have index 4. Therefore $f(S_3) = f(S_4) = R/J$, and it follows that $S_3\cap J$ and $S_4\cap J$ have order 4, being the kernels of $f|_{S_i}:S_i\to R/J = F_4$. But then the inclusion $S\cap J \subset S_i\cap J$ and the assumption that $S\cap J$ has order 4 means that $S\cap J = S_i \cap J$ for $i=3, 4$. Now apply Lemma \ref{S3cap I = Scap I lemma} 

{\bf Case 2a:} $|R|=256$ and $R/J = F_{16}$. We have $|S_1|=|S_2|=128$, $|S_3|=|S_4|=64$, and $|S|=32$. Note that $J$, which has order 16, is not contained in $S$ because $S$ contains no 2-sided ideals of $R$. Thus $|S\cap J|\in \{2, 4, 8\}$. 

If $S\cap J$ has order 2 then $f|_S :S\to R/J$ is onto. Noting that $J^2=0$, we see that $S\cap J$ is a vector space over $S/(S\cap J) = f(S)= F_{16}$. But 16 does not divide $|S\cap J|=2$.   

If $S\cap J$ has order 4 then $f(S) = S/(S\cap J)$ has order 8 and is a subring of $R/J = F_{16}$. This is impossible. 

Finally, suppose $S\cap J$ has order 8. The maps $f|_{S_i}:S_i\to R/J$ contain $S\cap J$ in their kernels. In particular, $f(S_3)$ and $f(S_4)$ have order at most $|S_3/(S\cap J)|= |S_4/(S\cap J)| = 8$. Since $F_{16}$ contains no subring of order 8, both $S_3$ and $S_4$ are contained in $f^{-1}(F_4)$, since $F_4$ is the unique subring of $F_{16}$ of order 4. 
This contradicts the maximality of $S_3$ and $S_4$.

{\bf Case 2b:} $|R|=256$ and $R/J = F_4$. The sizes of the subrings $S_i$ and $S$ are as before. Since $f(S) = S/(S\cap J)$ injects into $R/J$ and $1_R\in S\setminus J$, we have $|S\cap J|\in \{ 8, 16  \}$.

If $S\cap J$ has order 8, then $f|_S:S\to R/J$ is onto. Since $J^4 =0 $ we may write $$|S\cap J| = \prod\limits_{i=1}^3|(S\cap J^i)/(S\cap J^{i+1})|.$$ Observe that $(S\cap J^i)/(S\cap J^{i+1})$ is an $F_4$-subspace of $J^i/J^{i+1}$ since it is stable under multiplication by $S/(S\cap J) =f(S)=  F_4$. So $|(S\cap J^i)/(S\cap J^{i+1})|$ is a power of $4$, but their product equals $|S\cap J| =8$. 


If $S\cap J$ has order 16, then $f(S)=F_2$. Since $f^{-1}(F_2)$ is an index-2 subring of $R$, it cannot contain the index-4 maximal subrings $S_3$ or $S_4$. So $f(S_3)=f(S_4) = R/J$. Thus $S_3\cap J$ and $S_4\cap J$ have order 16, as kernels of the maps $f|_{S_i}:S_i\to R/J = F_4$. But then the inclusions $S\cap J \subset S_i\cap J$ and the assumption $|S\cap J|=16$ imply that $S\cap J = S_i \cap J$ for $i=3, 4$. Now apply Lemma \ref{S3cap I = Scap I lemma}. 
\end{proof}

By the previous two propositions, $R$ is indecomposable and nonlocal. Following the ideas in \cite{Cor I}, there exist orthogonal idempotents $e_1, e_2, \ldots, e_m\in R$ such that $\sum\limits_{i=1}^m e_i =1$, $R_i:= e_i R e_i$ is a full matrix ring over a local ring, and $M_{ij} := e_i R e_j$ is an $(R_i, R_j)$-bimodule contained in the radical $J$ of $R$. Let $M = \bigoplus\limits_{i\neq j} M_{ij}$. Then $J = J(R_1)\oplus \cdots \oplus J(R_m)\oplus M$. In particular, if $R/J = \bigoplus\limits_{i=1}^m R_i/J(R_i)$ has covering number 3 or 4, then $R\not\in S(4)$. Since $|R|\leq 256$, the only way some $R_i$ could be nonlocal is if $R_i = M_2(T)$, where $T$ is a local ring of order 2 or 4. If $T$ surjects onto $F_2$ then $R_i/J(R_i) = M_2(F_2)$ which has covering number 4, and $R\not\in S(4)$. Otherwise, $R_i = R =M_2(F_4)$, as this has order $256$. But $\sigma(M_2(F_4))>4$ by \cite{Wer18}. {\it Thus the subrings $R_i$ are all local rings}. 

We now study some constraints on $M$ and $R$, and then proceed to eliminate each possible value of $|R|$.

\begin{lemma} \label{indecomp lemma}
Write $R = \left(\bigoplus\limits_{i=1}^m R_i\right) \oplus M$ as above. The following hold. 

a) $|M_{ij}|\geq 4$ for all $1\leq i\neq  j\leq m$. In particular, $|M|\geq 4$. 

b) $ 8\leq [R:M]\leq 64$. 

c) If $[R:M] = 8$ then $m=2$, $\{R_1, R_2\} = \{F_2, F_4\}$, and $|R|\not\in \{64, 256\}$. 

d) If $[R:M]=16$ then $m=2$. If $|R|\in \{64, 256\}$ then $\{R_1, R_2\} = \{ F_2[x]/(x^2) , F_4 \}$. 

e) If $[R:M]=32$ or $64$, then $m=2$. 
\end{lemma}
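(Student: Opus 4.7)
The plan is to exploit three constraints on the Peirce decomposition $R=\bigoplus_i R_i \oplus M$. First, since $R$ has characteristic $2$, none of the $R_i$ can be $\Z_4$. Second, the residue fields $k_i:=R_i/J(R_i)$ satisfy a strong restriction: no two equal $F_2$ and no two equal $F_4$. This is because $R\in S(4)$ prohibits any proper quotient from having covering number $\leq 4$, and $R$ surjects onto $k_i\times k_j$ for every pair $i\neq j$; the covering numbers $\sigma(F_2\times F_2)=3$ and $\sigma(F_4\times F_4)=4$ (by Example \ref{p=2 comm}) would both produce contradictions, with the $F_4\times F_4$ surjection being proper since $|R|\geq 64>16$. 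Third, each bimodule $M_{ij}$ is a right module over the local ring $R_j$, hence $|M_{ij}|$ is a power of $|k_j|$, and symmetrically a power of $|k_i|$.

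Parts (a) and (b) then follow quickly. For (a), the residue field restriction forces at least one of $k_i, k_j$ to have order $\geq 4$ for every pair, so any nonzero $M_{ij}$ has $|M_{ij}|\geq 4$ by the power-of-$|k|$ constraint; indecomposability of $R$ ensures at least one $M_{ij}\neq 0$, giving $|M|\geq 4$. For (b), $[R:M]=\prod|R_i|\geq 2^m\geq 4$, and equality $[R:M]=4$ would require $R_1=R_2=F_2$, excluded by the residue field restriction, so $[R:M]\geq 8$; the upper bound $[R:M]\leq 64$ comes from $|R|/|M|\leq 256/4$.

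Parts (c), (d), (e) are handled by enumerating partitions of $[R:M]=\prod |R_i|$ over local rings of characteristic $2$ (namely $F_2$, $F_4$, $F_8$, and $F_2[x]/(x^2)$, after eliminating $\Z_4$), then applying the three constraints. The decisive tool for $m\geq 3$ is that indecomposability forces the Peirce quiver on $\{1,\ldots,m\}$ (edges at nonzero bimodules) to be connected, with at least $m-1$ edges; together with (a) this gives $|M|\geq 4^{m-1}$, so $m\geq 3$ forces $|M|\geq 16$, hence $|R|\geq[R:M]\cdot 16$, contradicting $|R|\leq 256$ whenever $[R:M]\geq 32$. This closes (e) outright, while (c) and (d) reduce to two-element partitions. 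In (c), $\{|R_1|,|R_2|\}=\{2,4\}$ with the size-$4$ factor $F_4$ (since $F_2[x]/(x^2)$ would create a second $F_2$ residue); as one factor is $F_4$, the power-of-$4$ constraint on $|M|$ rules out $|M|=|R|/8\in\{8,32\}$, excluding $|R|\in\{64,256\}$. In (d), the partitions $\{2,8\}$ and $\{4,4\}$ yield $\{F_2,F_8\}$ and $\{F_4,F_2[x]/(x^2)\}$; for $|R|\in\{64,256\}$, $\{F_2, F_8\}$ forces $|M|$ to be a power of $8$, incompatible with $|M|\in\{4,16\}$, leaving only the stated pair. The main obstacle is the bookkeeping to verify that the required $|M|=|R|/[R:M]$ cannot be expressed as a product of two powers of the $|k_j|$'s in the offending residue-field configurations, together with the subtle point that part (a)'s literal reading — that \emph{every} $M_{ij}$, not just every nonzero one, has $|M_{ij}|\geq 4$ — would need an extra argument (e.g., noting that $M_{ij}=0$ with $M_{ji}\neq 0$ would make $M_{ji}$ a two-sided ideal whose quotient $R_1\times R_2$ must be analyzed via Lemma \ref{F2 times R lemma}), though only $|M|\geq 4$ and the power-of-$|k_j|$ rule are used in (b)--(e).
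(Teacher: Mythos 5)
Your argument is correct and follows the same essential strategy as the paper: derive constraints from $R\in S(4)$ (no two local factors can share residue field $F_2$ or $F_4$, the latter because $F_4\times F_4$ would be a proper quotient of covering number $4$), then use the module structure of each $M_{ij}$ over the local factors $R_i$, $R_j$. Two points of comparison are worth noting. First, for part (a) the paper rules out $|M_{ij}|=2$ by a direct $\End_\Z(M_{ij})=F_2$ argument; your route via "any finite module over a finite local ring has order a power of the residue field" reaches the same conclusion and has the advantage of being reused verbatim in (c) and (d) to constrain $|M|$. Second, your Peirce-quiver connectivity observation (connected graph on $m$ vertices needs $\geq m-1$ edges, each forcing $|M|\geq 4$, so $m\geq 3$ gives $|M|\geq 16$ and hence $|R|\geq 512$ when $[R:M]\geq 32$) is a cleaner unified way to dispatch $m\geq 3$ in part (e) than the paper's case-by-case enumeration; for (c) and (d) the residue-field constraint alone kills $m\geq 3$, as you note. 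Finally, you correctly flag that statement (a) must be read as applying to \emph{nonzero} $M_{ij}$: the literal reading "$|M_{ij}|\geq 4$ for all $i\neq j$" is in fact inconsistent with the paper's own later use (e.g.\ the case $|M|=4$, $m=2$ concluding "either $M=M_{12}$ or $M=M_{21}$"), so the paper's proof, which only rules out $|M_{ij}|=2$, is implicitly proving the nonzero version — exactly what your argument establishes.
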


\begin{proof}
 a) Since $R$ is indecomposable and nonlocal, we have $m\geq 2$ and $M\neq 0$. If some $M_{ij}$ has order 2 then the homomorphisms $R_i\to \End_\Z(M_{ij}) = F_2$ and $R_j\to \End_\Z(M_{ij})=F_2$ coming from the bimodule structure would force $R_i$ and $R_j$ to have residue field $F_2$, from which we get a surjection $R\to R/J \to  F_2\times F_2$, and $\sigma(R)=3$. 
 
 b) If $[R:M]=4$ then $m=2$ and $R_1=R_2 = F_2$, so we get a surjection $R\to R/J \to F_2\times F_2$, and $\sigma(R)=3$. Thus  $[R:M]\geq 8$. The inequality $[R:M]\leq 64$ is clear since $|M|\geq 4$ and $|R|\leq 256$.   
 
 c) Suppose $[R:M]=8$. If $m=3$ then $R_1=R_2=R_3=F_2$, which forces $\sigma(R)=3$. If $m=2$ then up to indexing, $R_1 = F_2$ and $R_2$ is a local ring of order 4. If $R_2$ has residue field $F_2$ then once again $\sigma(R)=3$, so $R_2 = F_4$. Thus $|M|$ is a power of $4$, which together with $[R:M]=8$ rules out $|R|\in \{64, 256\}$. 
 
 d) Suppose $[R:M] = 16$. Then $m=2$ since $m\geq 3$ would imply at least two $R_i$ being $F_2$. If, say, $R_1 = F_2$ then $R_2$ is a local ring of order 8 that does not surject onto $F_2$, so $R_2=F_8$. This forces $|M|$ to be a power of 8, which rules out $|R|\in \{64, 256\}$. If neither $R_1$ nor $R_2$ is $F_2$, then both have order 4, and since $R$ does not surject onto $F_2\times F_2$ or $F_4\times F_4$, up to indexing we have $R_1 = F_4$ and $R_2\in \{\Z_4, F_2[x]/(x^2)  \}$. Since $R$ has characteristic 2, $R_2=F_2[x]/(x^2)$.  
 
 e) Suppose $[R:M]=32$. Note this cannot happen if $|R|=64$. If $m\geq 4$ then at least 2 of the $R_i$ equal $F_2$, and $\sigma(R)=3$. If $m=3$, then at least one $R_i=F_2$. If more than one is $F_2$ then again $\sigma(R)=3$. Otherwise, either the other two local rings $R_i$ both equal $F_4$, or at least one of them surjects onto $F_2$. In either case $\sigma(R/J)\in \{3, 4\}$ and $R\not\in S(4)$. So $m=2$. 
 
 Finally, suppose $[R:M]=64$, which is only possible if $|R|=256$. It is easy to see that if $m\geq 4$ then either two $R_i$ have residue field $F_2$ or equal $F_4$, and both of these cases force $R\not\in S(4)$. If $m=3$, and no two of $R_1, R_2$, and $R_3$ have common residue field $F_2$ or $F_4$, then $R_1 = F_2$, $R_2 = F_4$, and $R_3 = F_8$, up to indexing. But $|M| = 4$ so $M$ is not vector space over $F_8$. 
\end{proof}

In particular, we see that $m=2$ in all cases. The following lemma will rule out several cases below. Note that $M_{ij}M_{ji}\subset J(R_i)$, so in general $M$ is not a 2-sided ideal of $R$, but it is if $M^2=0$. 

\begin{lemma} \label{R/J = F2 times F4 impossibility}
The following cannot occur: $R = R_i \oplus R_j \oplus M$, with $R_i$ and $R_j$ having residue fields $F_2$ and $F_4$, respectively, and $M^2=J(R_j)M = MJ(R_j)=0$. 
\end{lemma}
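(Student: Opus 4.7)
The plan is to produce a proper quotient of $R$ with covering number at most $3$, contradicting $\sigma(R)=4$. First I would verify that $N:=J(R_j)\oplus M$ is a two-sided ideal of $R$. Using the Peirce decomposition $R=R_i\oplus R_j\oplus M_{ij}\oplus M_{ji}$ with orthogonal idempotents $e_i,e_j$, the products $R\cdot N$ and $N\cdot R$ decompose summand-by-summand; the three potentially troublesome pieces $M\cdot J(R_j)$, $J(R_j)\cdot M$, and $M\cdot M$ all vanish by hypothesis, and every other piece visibly lands in $J(R_j)\cup M$ by the bimodule relations and the orthogonality $e_ie_j=0$. Hence $N$ is an ideal and $R/N\cong R_i\times (R_j/J(R_j))=R_i\times F_4$.

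Next I would apply Lemma~\ref{F2 times R lemma}. Every nonzero quotient ring of the local ring $R_i$ has residue field $F_2$, and the only nonzero quotient of $F_4$ is $F_4$ itself. Since $F_2\not\cong F_4$, $R_i$ and $F_4$ share no common nonzero quotient, so
\[
\sigma(R/N)=\min\bigl(\sigma(R_i),\sigma(F_4)\bigr)=\sigma(R_i),
\]
as the field $F_4$ is not coverable. By Propositions~\ref{resfield p prop} and~\ref{noncomm resfield $p$ prop} applied to $R_i$, any finite local ring with residue field $F_2$ satisfies $\sigma\in\{3,\infty\}$. If $\sigma(R_i)=3$, then $\sigma(R)\leq \sigma(R/N)=3$, contradicting $\sigma(R)=4$, and we are done.

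The main obstacle is the remaining case $\sigma(R_i)=\infty$, in which $R_i$ is monogenic by Proposition~\ref{resfield p prop} (generated by a single element $1_{R_i}+r$ for some $r$ generating $J(R_i)$). To finish this case I would exploit Lemma~\ref{indecomp lemma}, which severely constrains the pair $(R_i,R_j)$ in each range of $[R:M]\in\{8,16,32,64\}$ (e.g., $(R_i,R_j)=(F_2,F_4)$ with $|R|=128$ when $[R:M]=8$), and use the $F_4$-bimodule structure forced on $M$ by $J(R_j)M=MJ(R_j)=0$ together with $M^2=0$ to lift a monogenic generator of $R/N$ to an element $w\in R$ whose subring $\langle w\rangle$ absorbs the ideal $N$. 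This would make $R$ itself monogenic and hence not coverable, contradicting $\sigma(R)=4$.
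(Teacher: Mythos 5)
Your observation that $N:=J(R_j)\oplus M$ is a two-sided ideal under the stated hypotheses is correct (the Peirce relations plus $M^2=J(R_j)M=MJ(R_j)=0$ kill every dangerous product), and so is the identification $R/N\cong R_i\times F_4$ and the application of Lemma~\ref{F2 times R lemma} to get $\sigma(R/N)=\sigma(R_i)\in\{3,\infty\}$. The case $\sigma(R_i)=3$ indeed forces $\sigma(R)\le 3$ and is done. But the plan collapses exactly at the ``main obstacle'' you flag: when $\sigma(R_i)=\infty$ you propose to lift a monogenic generator $\bar w$ of $R/N$ to some $w\in R$ with $N\subset\langle w\rangle$, i.e.\ to show $R$ is generated by one element. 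That cannot happen. By the standing hypotheses of this section (established just before the lemma, via Proposition~\ref{noncommlema order 16}, Corollary~\ref{2nd Corollary}, and Corollary~\ref{3rd corollary}), the ring $R$ under consideration is noncommutative, whereas any subring $\langle w\rangle$ generated by a single element is commutative. So no such lift exists, and no amount of bimodule bookkeeping or appeal to Lemma~\ref{indecomp lemma} will produce one. Worse, in every place the paper actually invokes this lemma, $R_i$ is a local ring of order $2$, $4$, $8$, or $16$ with residue field $F_2$ that fails the coverability criterion of Proposition~\ref{resfield p prop} (e.g.\ $F_2$, $F_2[x]/(x^2)$), so $\sigma(R_i)=\infty$ is the \emph{only} case that occurs and your argument never gets off the ground.

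The paper's proof avoids quotients entirely and works with the good tuple $(R,S_1,\dots,S_4)$ directly. Let $f:R\to R/J=F_2\times F_4$. One shows that any index-$2$ subring $T\ni 1_R$ has $f(T)\in\{F_2\times F_2,\ F_2\times F_4\}$, and in either case $T\supset M$: if $f(T)=F_2\times F_2$ then $T=f^{-1}(F_2\times F_2)\supset J\supset M$, while if $f(T)=F_2\times F_4$ one extracts $e_j\in T$ and representatives of $R_j/J(R_j)=F_4$ inside $T$, making $T\cap M$ an $F_4$-subspace of $M$ of index at most $2$, hence all of $M$. Applying this to $S_1$ and $S_2$ gives $S_1\cap S_2\supset M$, so $[M:S\cap M]\le 2$; but $S\not\supset M$ since $M^2=0$ makes $M$ a nonzero two-sided ideal and $S$ contains no such ideal. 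Hence $S\cap M=S_3\cap M=S_4\cap M$, and Lemma~\ref{S3cap I = Scap I lemma} supplies the contradiction. If you want to salvage your quotient-based start, you would need to replace the ``make $R$ monogenic'' step with an argument of this kind; the ideal $N$ you identified is genuinely useful, but passing to $R/N$ forgets too much.
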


\begin{proof}
Let $f:R\to R/J=F_2\times F_4$ be the quotient map. Suppose $T$ is a subring of $R$ with $[R:T]=2$ and $1_R\in T$. Since $[f(R): f(T)]\leq [R:T]=2$, and $f(1_R) = (1,1)\in f(T)$, either $f(T)=F_2\times F_2$ or $f(T)=F_2\times F_4$. 

If $f(T) = F_2\times F_2$ then $T\subset f^{-1}(F_2\times F_2)\subsetneq R$, and maximality of $T$ implies $T = f^{-1}(F_2\times F_2)$. In particular, $T\supset J \supset M$. 

Suppose $f(T) = F_2\times F_4$. Let $t\in T$ such that $f(t)=(0, c)\in F_2\times F_4$ where $c\not\in F_2$. Since $[R_j:T\cap R_j]\leq 2$ and $[R_j: J(R_j)]>2$, we have $T\cap R_j\not\subset J(R_j)$. Therefore $T$ intersects the unit group $R_j^\times = R_j\setminus J(R_j)$. Finiteness then implies $T\ni e_j$, the idempotent of $R$ giving the unit in $R_j= e_jR e_j$. Then  $e_j t e_j\in T\cap R_j$ and $f(e_j t e_j ) = (0,c)$. It follows that $T\cap M$ is an $F_4$-subspace of $M$, since $T$ contains a set of representatives for $R_j/J(R_j)=F_4$. But $[M:T\cap M]\leq 2$, so we must have $T\supset M$. 

Now, $1_R\in S_1\cap S_2$ and $[R:S_1]=[R:S_2]=2$, so the argument just given shows that $S_1\cap S_2\supset M$. Since $[S_1\cap S_2: S] = 2$, we have $[M:S\cap M]\leq 2$. We cannot have $S\supset M$, because the assumption $M^2 =0$ means $M$ is a 2-sided ideal of $R$. 
Since $S= S_1\cap S_2\cap S_3$, we have $S_3\not\supset M$ and $2\leq [M:S_3\cap M]\leq [M: S\cap M]\leq 2$. The inclusion $S\cap M \subset S_3\cap M$ now forces $S\cap M = S_3\cap M$, and similarly $S\cap M = S_4\cap M$. Now apply Lemma \ref{S3cap I = Scap I lemma}. 
\end{proof}


\begin{prop}
$|R|\neq 64$.
\end{prop}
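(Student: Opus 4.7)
The plan is to combine the structural constraints already established on $R$ with the classification of $[R:M]$ in Lemma \ref{indecomp lemma} and then invoke Lemma \ref{R/J = F2 times F4 impossibility} to kill the single remaining configuration.

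First I would observe that $|R|=64$ together with $|M|\geq 4$ (Lemma \ref{indecomp lemma}(a)) and $[R:M]\geq 8$ (Lemma \ref{indecomp lemma}(b)) forces $([R:M],|M|)\in\{(8,8),(16,4)\}$. The value $[R:M]=8$ is ruled out immediately by Lemma \ref{indecomp lemma}(c), which asserts that this index is incompatible with $|R|=64$. So the only surviving possibility is $[R:M]=16$ with $|M|=4$, and then Lemma \ref{indecomp lemma}(d) pins down $m=2$ and $\{R_1,R_2\}=\{F_2[x]/(x^2),F_4\}$. In particular $R/J=F_2\times F_4$.

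Next I would analyze $M=M_{12}\oplus M_{21}$ as an additive group. Lemma \ref{indecomp lemma}(a) says $|M_{ij}|\geq 4$ whenever $M_{ij}\neq 0$, so if both $M_{12}$ and $M_{21}$ were nonzero we would have $|M|\geq 16$, contradicting $|M|=4$. Hence exactly one of them is nonzero, and WLOG $M=M_{ab}$ where we write $R_a$ for the local factor with residue field $F_4$ (so $R_a=F_4$) and $R_b$ for the factor with residue field $F_2$ (so $R_b=F_2[x]/(x^2)$). Using the Peirce identities $e_a e_b=e_b e_a=0$, it is immediate that $M^2\subset e_a R e_b e_a R e_b=0$. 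Moreover $J(R_a)=J(F_4)=0$, so $J(R_a)M=MJ(R_a)=0$ trivially (and the symmetric case $M=M_{ba}$ works the same way).

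With these vanishings in hand, the hypotheses of Lemma \ref{R/J = F2 times F4 impossibility} (applied to our decomposition, with the $F_4$ factor playing the role of $R_j$) are met verbatim, and that lemma yields the contradiction. The whole argument is therefore a matter of bookkeeping on indices and bimodule orders; the main obstacle is simply verifying that the exceptional case left over after Lemma \ref{indecomp lemma} genuinely satisfies the $M^2=J(R_j)M=MJ(R_j)=0$ hypothesis, which it does cheaply because one of the local factors is a field. Since both sub-cases of Lemma \ref{indecomp lemma} are eliminated, no such $R$ of order $64$ can exist.
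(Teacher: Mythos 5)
Your proof is correct and follows essentially the same path as the paper's: use Lemma \ref{indecomp lemma} to pin down $[R:M]=16$, $m=2$, $\{R_1,R_2\}=\{F_4, F_2[x]/(x^2)\}$, note $M^2=0$ from orthogonality of the idempotents, and apply Lemma \ref{R/J = F2 times F4 impossibility}. You are slightly more explicit than the paper in spelling out why only one $M_{ij}$ is nonzero and why the $J(R_j)M=MJ(R_j)=0$ hypotheses hold (because $J(F_4)=0$), but the argument is the same.
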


\begin{proof}
Suppose $R$ has order 64. From Lemma \ref{indecomp lemma} and its proof, $[R:M]=16$, $m=2$, $\{R_1, R_2\} = \{ F_4, F_2[x]/(x^2) \}$ and $M=M_{12}$ has order 4.  
Because the $e_i$ are orthogonal idempotents, $M^2=M_{12}^2= 0$. 
Now apply Lemma \ref{R/J = F2 times F4 impossibility}.  
\end{proof}

\begin{prop}
$|R|\neq 128$. 
\end{prop}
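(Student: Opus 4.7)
The plan is to show that every admissible structure allowed by Lemma~\ref{indecomp lemma} leads to a contradiction, either via an incompatibility between $|M|$ and the module-theoretic sizes of $M_{12}$ and $M_{21}$, or by applying a slight extension of Lemma~\ref{R/J = F2 times F4 impossibility}. Since $|R|=128$, Lemma~\ref{indecomp lemma} forces $m=2$ and $[R{:}M]\in\{8,16,32\}$. I would first enumerate, for each value of $[R{:}M]$, the isomorphism types of $R_1\oplus R_2$ for which $R/J$ has covering number greater than $4$: this gives $\{F_2,F_4\}$ when $[R{:}M]=8$; $\{F_2,F_8\}$ or $\{F_4,F_2[x]/(x^2)\}$ when $[R{:}M]=16$; and (when $[R{:}M]=32$) one of $\{F_2,F_{16}\}$, $\{F_2,F_4[x]/(x^2)\}$, $\{F_4,F_8\}$, $\{F_4,T\}$ for a local ring $T$ of order $8$ with residue field $F_2$, and $\{F_2[x]/(x^2),F_8\}$.

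The key technical input is that whenever $R_k$ has residue field $F_q$, one has $F_q\subseteq R_k$ (as a coefficient subring), so each $M_{ij}$ carries an $F_q$-action from whichever side involves $R_k$. In particular $|M_{ij}|$ must be a power of $q$, and when both $R_1$ and $R_2$ have nontrivial residue fields the order $|M|=|M_{12}|\cdot|M_{21}|$ is tightly constrained. This immediately eliminates $\{F_4,F_2[x]/(x^2)\}$ with $|M|=8$, $\{F_2,F_{16}\}$ with $|M|=4$, $\{F_4,F_8\}$ with $|M|=4$ (where $|M_{ij}|$ would need to be a simultaneous power of $4$ and $8$), and $\{F_2[x]/(x^2),F_8\}$ with $|M|=4$.

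In each remaining case the same size constraint forces one of $M_{12}$, $M_{21}$ to vanish; orthogonality of the idempotents $e_i$ then yields $M^2=0$. When both $R_i$ are fields (the $\{F_2,F_4\}$ and $\{F_2,F_8\}$ cases) the hypotheses $J(R_j)M=MJ(R_j)=0$ are automatic and Lemma~\ref{R/J = F2 times F4 impossibility} (suitably extended to $F_q$ with $q\geq 4$) applies. When $R_j=F_4$ (the $\{F_4,T\}$ case) again $J(R_j)=0$. The one genuinely new verification is for $\{F_2,F_4[x]/(x^2)\}$: the nonzero $M_{ij}$ is an $F_4$-line, so the $F_4$-linear action of $x\in J(R_2)=(x)$ on it is scalar; the relation $x^2=0$ then forces this scalar to be $0$, whence $J(R_2)$ annihilates $M$ on both sides.

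The main obstacle is that Lemma~\ref{R/J = F2 times F4 impossibility} as stated concerns only $F_4$, so for the $\{F_2,F_8\}$ case one needs its analogue with $F_4$ replaced by $F_q$ for $q=8$. Its proof transports essentially verbatim: the $F_q$-vector space argument giving $T\supset M$ for an index-$2$ subring $T\ni 1_R$ uses only that $[M:T\cap M]\leq 2$ and that $T\cap M$ is an $F_q$-subspace of the $F_q$-vector space $M$; and the only alternative possibility, an index-$2$ subring of $F_2\times F_q$ containing $(1,1)$, would require $F_q$ to possess a subring of size $q/2$, which is ruled out for $q=8$ because the only subfields of $F_8$ are $F_2$ and $F_8$ itself. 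The rest of the argument, culminating in the application of Lemma~\ref{S3cap I = Scap I lemma} to the two-sided ideal $M$, is unchanged.
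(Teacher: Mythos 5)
Your proposal follows essentially the same route as the paper: reduce via Lemma~\ref{indecomp lemma} to $m=2$ and $[R:M]\in\{8,16,32\}$, enumerate the pairs $(R_1,R_2)$ compatible with $R\in S(4)$, eliminate most of them using the constraint that $|M_{ij}|$ must be a power of $q$ whenever $R_i$ or $R_j$ has residue field $F_q$ (since the composition factors of a finite module over a finite local ring are all isomorphic to the residue field), and apply Lemma~\ref{R/J = F2 times F4 impossibility} to the survivors. Two points, however, need correction or clarification.

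The assertion that in each surviving case the size constraint ``forces one of $M_{12}$, $M_{21}$ to vanish'' is false for $[R:M]=8$, $\{R_1,R_2\}=\{F_2,F_4\}$, $|M|=16$: here $(|M_{12}|,|M_{21}|)=(4,4)$ is perfectly possible, so both summands can be nonzero. The conclusion you want, $M^2=0$, still holds, but for a different reason: $M_{12}M_{21}\subseteq J(R_1)=0$ and $M_{21}M_{12}\subseteq J(R_2)=0$ because both local factors are fields. After this repair, your invocation of Lemma~\ref{R/J = F2 times F4 impossibility} in that case is sound.

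For $[R:M]=16$ with $\{R_1, R_2\}=\{F_2,F_8\}$, the paper argues directly rather than extending Lemma~\ref{R/J = F2 times F4 impossibility}: since the only subrings of $F_8$ are $0$, $F_2$, and $F_8$, each index-2 subring $S_1\ni 1_R$ must contain $R_2=F_8$; a counting argument then gives a nonzero element of $S_1\cap J$, hence $S_1\supset J$ because $J$ is an $F_8$-line and $F_8\subset S_1$; finally $S_1/J$ would be an order-8 subring of $F_2\times F_8$ containing $(1,1)$, which does not exist. Your proposed $F_8$-analogue of the lemma is correct and hinges on the same nonexistence fact, but the paper's version is a little shorter because it bypasses Lemma~\ref{S3cap I = Scap I lemma}. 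One last, harmless slip: the local ring of order 16, characteristic 2, and residue field $F_4$ need not be $F_4[x]/(x^2)$; there is also a noncommutative one. Both your ``the action of a generator of $J(R_2)$ on the $F_4$-line $M$ has square zero, hence is zero'' argument (suitably phrased to allow semilinearity) and the paper's Nakayama argument apply to either ring, so the conclusion is unaffected.
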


\begin{proof}
Suppose $R$ has order 128. By Lemma \ref{indecomp lemma} and its proof, there are three possible values for $[R:M]$, all of which have $m=2$. 

{\bf Case 1:} $[R:M]=8$, so $|M|=16$. Lemma \ref{indecomp lemma} shows $\{R_1, R_2\} = \{F_2, F_4\} $. In particular, $J= M$ is a 2-dimensional vector space over $F_4$. Then $M^2 = M_{12}M_{21}\oplus M_{21}M_{12}\subset J(R_1)\oplus J(R_2)=0$. Now apply Lemma \ref{R/J = F2 times F4 impossibility}.

 {\bf Case 2:} $[R:M]=16$, so $|M|=8$. If $R_1\neq F_2$ and $R_2\neq F_2$, then $\{R_1, R_2\} = \{F_4, F_2[x]/(x^2)\}$. But $M$ has order 8 so is not an $F_4$-vector space. So up to indexing, $R_1 = F_2$ and $R_2 = F_8$. Thus $M=J$ is a 1-dimensional $F_8$-vector space.
 As $R_2= F_8$ has no index-2 subrings and $[R_2: S_1\cap R_2] \leq 2$, we have $S_1\supset R_2$. Since $|S_1||J|>|R|$ there exists $0\neq j\in S_1\cap J$.  Then $R_2$ and $j$ generate the 1-dimensional $F_8$-vector space $J$, so $S_1\supset  J$. Therefore $S_1/J\subset R/J = F_2\times F_8$ has order 8 and contains $(1, 1)$. This is impossible.

{\bf Case 3:} $[R:M]=32$, so $|M|=4$. By Lemma \ref{indecomp lemma} either $M=M_{12}$ or $M=M_{21}$. In particular, $M^2 =0$. 

If $R_1 = F_2$, then $|R_2|=16$. Since $|M|<16$,  $R_2\neq F_{16}$, and $R_2$ does not have residue field $F_2$ since that would give a surjection $R\to F_2\times F_2$. So $R_2$ has residue field $F_4$. 
Suppose $M= M_{12}$. By Nakayama's lemma, $M/MJ(R_2)$ is a nonzero (right) module over $R_2/J(R_2)  = F_4$, so has order at least 4. Thus $MJ(R_2)=0$. Similarly, if $R_2M \neq 0$ then $J(R_2)M=0$. Now apply Lemma \ref{R/J = F2 times F4 impossibility}. The case $M=M_{21}$ is similar.

If $R_1\neq F_2$, we can assume $R_1$ has order 4 and $R_2$ has order 8. Since $|M|<8$, we conclude that $R_2$ has residue field $F_2$, which forces $R_1=F_4$. 
Since $MJ(R_2)$ is an $F_4$-subspace of $M$ and $MJ(R_2)\neq M$ by Nakayama's Lemma, we have $MJ(R_2)=0$. Similarly $J(R_2)M=0$, so apply Lemma \ref{R/J = F2 times F4 impossibility}. 
\end{proof}

\begin{prop}
$|R|\neq 256$. 
\end{prop}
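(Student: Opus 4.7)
The plan is to split into cases based on $[R:M]\in\{16,32,64\}$ allowed by Lemma \ref{indecomp lemma}, ruling out the two larger values purely by module constraints, and handling $[R:M]=16$ via a combination of Lemma \ref{R/J = F2 times F4 impossibility} and a direct ideal-maximality argument.

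For $[R:M]\in\{32,64\}$, Lemma \ref{indecomp lemma}(e) gives $m=2$, so $R=R_1\oplus R_2\oplus M$ with $R_1,R_2$ local. I would enumerate the possible pairs $(|R_1|,|R_2|)$ with $|R_1||R_2|\in\{32,64\}$; for each, the allowed residue fields are constrained by the fact that a finite local ring of order $2^n$ with residue field $F_{2^k}$ satisfies $k\mid n$. Cases where both $R_i$ have residue field $F_2$ force $\sigma(R)=3$; cases where both have residue field $F_4$ force $R$ to surject onto $F_4\times F_4\in S(4)$, contradicting $R\in S(4)$. In every surviving sub-case, Nakayama's lemma applied to the $(R_i,R_j)$-bimodule structure on $M_{ij}$ shows that $|M_{ij}|$ is divisible by both $|R_i/J(R_i)|$ and $|R_j/J(R_j)|$. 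Since at least one of these residue fields has order $\ge 4$, one computes $|M_{12}||M_{21}|\ge 16$ in each case, contradicting $|M|\in\{4,8\}$.

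For $[R:M]=16$, Lemma \ref{indecomp lemma}(d) gives $\{R_1,R_2\}=\{F_2[x]/(x^2),F_4\}$; label $R_1=F_2[x]/(x^2)$ and $R_2=F_4$. Since $M_{12}$ is a right $F_4$-vector space, $|M_{12}|$ is a power of $4$ and $\ge 4$; together with $|M_{12}||M_{21}|=16$ and $|M_{21}|\ge 4$, this forces $|M_{12}|=|M_{21}|=4$. Nakayama's lemma also gives $J(R_1)M_{12}=M_{21}J(R_1)=0$ (since these submodules would otherwise fill the one-dimensional $F_4$-spaces $M_{12}$, $M_{21}$), while $M_{12}J(R_1)=J(R_1)M_{21}=0$ is automatic from idempotent orthogonality. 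Since $J(R_2)=0$ we have $M_{21}M_{12}\subset J(R_2)=0$, so $M^2=M_{12}M_{21}\subset J(R_1)$, of order at most $2$. If $M^2=0$, apply Lemma \ref{R/J = F2 times F4 impossibility} with $R_j=R_2$. If instead $M^2=J(R_1)\ne 0$, the relations above make $J(R_1)$ a 2-sided ideal of $R$ of order $2$; for each $i\in\{3,4\}$, the subgroup $S_i+J(R_1)$ is then a subring, and if $J(R_1)\not\subset S_i$ it would have index $[R:S_i]/2=2$ in $R$ and strictly contain the maximal $S_i$, a contradiction. Hence $J(R_1)\subset S_3\cap S_4=S$, and $S$ contains the nonzero 2-sided ideal $M^2$, contradicting $R\in S(4)$.

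The main obstacle is the sub-case $M^2\ne 0$ within $[R:M]=16$: here the hypotheses of Lemma \ref{R/J = F2 times F4 impossibility} fail, so one must instead force the nonzero ideal $J(R_1)$ into $S$ using maximality of $S_3$ and $S_4$, which depends crucially on first verifying (via the Nakayama-derived vanishing relations) that $J(R_1)$ is a two-sided ideal of $R$ and not merely of $R_1$.
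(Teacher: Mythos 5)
Your treatment of $[R:M]=16$ is essentially sound and includes a genuinely different (and arguably slicker) argument for the $M^2\neq 0$ sub-case: rather than the paper's approach of tracking images of $S_1,S_2$ in $R/J$, you show that the order-2 ideal $J(R_1)=M^2$ is a two-sided ideal of $R$ (via the Nakayama- and orthogonality-derived vanishing relations) and then force $J(R_1)\subset S_3\cap S_4=S$ by maximality, contradicting that $S$ contains no nonzero ideal. That works, though you should note the sub-case where one of $M_{12},M_{21}$ is zero (then $|M_{ij}|=16$, $M^2=0$ automatically, and Lemma~\ref{R/J = F2 times F4 impossibility} applies directly); you implicitly assume both nonzero when you conclude $|M_{12}|=|M_{21}|=4$.

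However, there is a genuine gap in your handling of $[R:M]\in\{32,64\}$. You assert that Nakayama gives $|M_{12}||M_{21}|\ge 16$ in every surviving sub-case, contradicting $|M|\in\{4,8\}$. This is only valid when \emph{both} $M_{12}$ and $M_{21}$ are nonzero. But the size constraint $|M|\in\{4,8\}$ together with the fact that nonzero $M_{ij}$ has order $\ge 4$ (Lemma~\ref{indecomp lemma}(a)) forces exactly one $M_{ij}$ to be zero in these cases, and then $|M_{12}||M_{21}|=0$ and no contradiction arises. Order-counting alone does not finish: for $[R:M]=32$ with $R_1=F_2[x]/(x^2)$, $R_2=F_8$, one has $M=M_{12}$ (or $M_{21}$) of order $8=|F_8|$, which is perfectly consistent with the module constraints; the paper must instead show that $R$ admits only one index-2 subring containing $1_R$, contradicting the existence of both $S_1$ and $S_2$. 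Likewise, for $[R:M]=32$ with residue fields $F_2$ and $F_4$ and $|M|=8$, one needs the finer observation that $|M_{ij}|$ is a \emph{power} of the residue field order (not merely divisible by it) to get a contradiction, and for $[R:M]=64$ the surviving sub-cases are not killed by size at all — they are eliminated by establishing $MJ(R_2)=J(R_2)M=0$ via Nakayama and then invoking Lemma~\ref{R/J = F2 times F4 impossibility}. Your proposal omits all of this, so as written the argument for $[R:M]\in\{32,64\}$ does not go through.
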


\begin{proof}
Suppose $R$ has order 256. From Lemma \ref{indecomp lemma} and its proof, there are three possible values for $[R:M]$, all of which have $m=2$. 

{\bf Case 1:} $[R:M]=16$, so $|M|=16$. By Lemma \ref{indecomp lemma}, $R_1=F_4$ and $R_2= F_2[x]/(x^2) $, up to indexing. 

Suppose $M^2 \neq0$. Then $M^2 = M_{12}M_{21}\oplus M_{21}M_{12}= J(R_2)$ has order 2. It follows that any subring of $R$ containing $M$ also contains $J=J(R_2)\oplus M$. The proof of Lemma \ref{R/J = F2 times F4 impossibility} shows that if $T$ is an index-2 subring of $R$ and  $T\ni 1_R$ then $T\supset M$. We note that this did not assume $M^2 =0$. Therefore $S_1\supset J$ and $S_2\supset J$. But then the image of $S_1$ and $S_2$ in $R/J = F_2\times F_4$ is $F_2\times F_2$, which means $S_1\cup S_2$ is contained in the preimage of $F_2\times F_2$, a contradiction. Therefore $M^2=0$ and Lemma \ref{R/J = F2 times F4 impossibility} applies. 


{\bf Case 2:}  $[R:M]=32$, so $|M|=8$. By Lemma \ref{indecomp lemma}, $M$ has only one summand $M_{ij}$ of order 8, else $M$ would have a summand of order 2.  In particular, $M^2 =0$. 

If $R_1=F_2$ then $R_2$ is a local ring of order 16 and with residue field not equal to $F_2$. Since $|M|<16$, the residue field of $R_2$ must be $F_4$. We have $|J(R_2)|=4$ and $J(R_2)^2 =0$. Assume $M = M_{12}$; the case $M=M_{21}$ is similar and corresponds to the opposite ring. 
By Nakayama's Lemma, $M/MJ(R_2)$ is a nonzero right module over $R_2/J(R_2)=F_4$. Since $M$ has order 8, this prevents $MJ(R_2)$ from being zero. So $MJ(R_2) = MJ(R_2)/MJ(R_2)^2$ is also a nonzero right module over $R_2/J(R_2)$. But then both $M/MJ(R_2)$ and $MJ(R_2)$ have orders divisible by 4, and with product 8.

If $R_1\neq F_2$ then, up to indexing, $R_1$ has order 4 and $R_2$ order 8. Since $|M|$ is not a power of $4$, we cannot have $R_1= F_4$. So $R_1$ must have residue field $F_2$, and since $R$ has characteristic 2, this forces $R_1 = F_2[x]/(x^2)$. The local ring $R_2$ cannot also have residue field $F_2$, so $R_2=F_8$. We claim that there is exactly one subring of $R$ of index 2 that contains $1_R$. This will contradict the existence of $S_1$ and $S_2$. 
Towards that end, let $T$ be an index-2 subring of $R$ containing $1_R$. Then $[R_2:T\cap R_2]\leq 2$, which forces $T\supset R_2$ since $R_2 = F_8$ has no subrings of index 2. Similarly, there exists $0\neq m\in T\cap M$ since $[M:T\cap M]\leq 2$. But $M$ is a 1-dimensional vector space over $R_2 = F_8\subset T$, so $T\supset M$. Since $1_R = e_1 + e_2\in T$ and $e_2\in T$, we have  $T \supset F_2 \oplus R_2 \oplus M$. The right side being an additive subset of $R$ of index $2$, we conclude $T = F_2\oplus R_2\oplus M$.

{\bf Case 3:} $[R:M]=64$, so $|M|=4$. By Lemma \ref{indecomp lemma}, $M$ has only one summand, so $M^2 =0$. 
If $R_1=F_2$, then $R_2$ is a local ring of order 32. Such a ring either has residue field $F_2$, in which case $\sigma(R)=3$, or else is equal to $F_{32}$, which is impossible since $|M|<32$. If $R_1$ and $R_2$ have order 8, then either they both surject onto $F_2$ and $\sigma(R)=3$, or else at least one of them is $F_8$. But this is again impossible since $|M|<8$. Thus $R_1$ has order 4 and $R_2$ order 16. The residue field of $R_2$ cannot be $F_{16}$ because $|M|=4$. If $R_1$ and $R_2$ both have the same residue field $F_2$ or $F_4$, then $R\not\in S(4)$. So one has residue field $F_2$ and the other $F_4$. Since $R$ has characteristic 2, the two choices for $R_1$ are $F_4$ and $F_2[x]/(x^2)$.

 If $R_1=F_4$ and $R_2$ has order 16 with residue field $F_2$, Lemma \ref{R/J = F2 times F4 impossibility} applies. 

Suppose $R_1 = F_2[x]/(x^2)$ and $R_2$ has order 16 and residue field $F_4$. 
If $M= M_{12}$, then $M/MJ(R_2)$ is nonzero by Nakayama's Lemma and is a module for $R_2/J(R_2) = F_4$. This and $|M|=4$ forces $MJ(R_2)=0$. The case $M=M_{21}$ is similar. Now apply Lemma \ref{R/J = F2 times F4 impossibility}. 
\end{proof}

This completes the proof of part b) of Theorem \ref{Theorem 2}.

\section{Further directions}


This paper established a strategy to classify unital rings of covering number $n$ that requires a finite amount of computation. It is possible that $S(n)$ is still finite if non-unital rings are permitted. This is known if $n=3$; see \cite{Maro12}. Perhaps the more general case requires no new ideas from those presented here. However, the actual computation of $S(n)$ would likely require different techniques. 

Propositions \ref{resfield p prop} and \ref{noncomm resfield $p$ prop} applied only to local rings with residue field $F_p$. Having a similar criterion for local rings with arbitrary residue field would help significantly in classifying which commutative rings are coverable, and in any attempt to compute $S(n)$ for larger $n$. For example, $S(5)$ contains both a noncommutative and commutative local ring with residue field $F_4$. 

It is unknown whether $S(n)$ is nonempty for all $n\geq 3$. Equivalently, it is not known if every $n\geq 3$ is the covering number of some ring. If true, this would be in stark contrast to the corresponding situation for groups: it is shown in \cite{Wer18} that $\sigma(M_2(F_3) ) =7$, but Theorem 3.6. of \cite{Tom97} states that $7$ is not the covering number of any group. As a more refined question, one may ask if, given $n\geq 3$ such that $S(n)$ is nonempty, and given a prime $p<n$, there is always a ring in $S(n)$ of characteristic $p$. 

If $R\in S(n)$ for some $n\geq 3$, then there exists what we have called a good tuple $(R, S_1, \ldots, S_n)$ with $R=\bigcup_i S_i $. It would be interesting to determine how unique this tuple is for given $R$; for $R\in S(3)$ and $R\in S(4)$ there are exactly $\sigma(R)$ maximal subrings. One can also ask if there are any common features of such covers. For example, if $R\neq F_2\times F_2$ and $R\in S(n)$, then must we always have $1_R\in \bigcap\limits_{i=1}^n S_i$?  

It is clear that if $R$ is coverable and all its proper quotients are not, then $R\in S(\sigma(R))$. Based on the known examples, one can speculate whether the converse holds: if $R\in S(\sigma(R))$, does one always have $\sigma(R/I)=\infty$ for every proper ideal $I$? Such a theorem would, if true, imply that a noncommutative ring in any $S(n)$ can only have commutative quotients. 


Following the example of groups, one might consider an expanded notion of coverings in which cosets of subrings are considered. Alternatively, one might restrict the notion of coverings to permit only special classes of subrings. Does doing so affect the existence or basic properties of the corresponding set $S(n)$?


\end{document}